\newcolumntype{C}[1]{>{\centering\hspace{0pt}}p{#1}}
\newcommand{\R}{\mathbb{R}}
\newcommand{\V}{\mathsf{V}}
\newcommand{\K}{\mathsf{K}}
\newcommand{\Ls}{\mathsf{L}}
\newcommand{\sph}{\ensuremath{\textrm{Spin}^h \left( 4 \right)}}
\newcommand{\spsev}{\ensuremath{\text{Spin}(7)}}
\newtheorem{thm}{Theorem}[section]
\newtheorem{prop}[thm]{Proposition}
\newtheorem{lem}[thm]{Lemma}
\newtheorem{cor}[thm]{Corollary}
\theoremstyle{definition}
\newtheorem{defn}[thm]{Definition}
\theoremstyle{definition}
\newtheorem*{rmk}{Remark}
\numberwithin{equation}{section}
\title{The Mean Curvature of First-Order Submanifolds \\ in Exceptional Geometries with Torsion}
\author{Gavin Ball and Jesse Madnick}
\date{September 2019}
\newcommand{\Addresses}
{{  \bigskip
  \textsc{Universit\'{e} du Qu\'{e}bec \`{a} Montr\'{e}al} \par\nopagebreak
  \textsc{D\'{e}partement de math\'{e}matiques}\par\nopagebreak
  \textsc{Case postale 8888, succursale centre-ville, Montr\'{e}al (Qu\'{e}bec), H3C 3P8, Canada}\par\nopagebreak
  \textit{E-mail address}: \texttt{gavin.cf.ball@gmail.com} \\

 \bigskip
  \textsc{McMaster University} \par\nopagebreak
  \textsc{Department of Mathematics \& Statistics}\par\nopagebreak
  \textsc{Hamilton, ON, Canada, L8S 4K1}\par\nopagebreak
 \textit{E-mail address}: \texttt{madnickj@mcmaster.ca}

}}
\begin{document}

\maketitle

\begin{abstract}
We derive formulas for the mean curvature of associative $3$-folds, coassociative $4$-folds, and Cayley $4$-folds in the general case where the ambient space has intrinsic torsion.  Consequently, we are able to characterize those $\text{G}_2$-structures (resp., $\text{Spin}(7)$-structures) for which every associative $3$-fold (resp. coassociative $4$-fold, Cayley $4$-fold) is a minimal submanifold. \\
\indent In the process, we obtain new obstructions to the local existence of coassociative $4$-folds in $\text{G}_2$-structures with torsion.
\end{abstract}

\tableofcontents

 \pagebreak
 
\section{Introduction}

%

\indent \indent  In their fundamental work on calibrations, Harvey and Lawson \cite{harvey1982calibrated} defined four new classes of calibrated submanifolds in Riemannian manifolds with special holonomy, summarized in the following table:
\begin{center}
  \begin{tabular}{ | l | l |}
    \hline
    Submanifold & Ambient Manifold \\ \hline \hline
    Special Lagrangian $n$-fold & Riemannian $2n$-manifold $(M^{2n}, g)$ with $\text{Hol}(g) \leq \text{SU}(n)$ \\ \hline
    Associative $3$-fold & Riemannian $7$-manifold $(M^7, g)$ with $\text{Hol}(g) \leq \text{G}_2$ \\ \hline
    Coassociative $4$-fold & Riemannian $7$-manifold $(M^7, g)$ with $\text{Hol}(g) \leq \text{G}_2$ \\ \hline
    Cayley $4$-fold & Riemannian $8$-manifold $(M^8, g)$ with $\text{Hol}(g) \leq \text{Spin}(7)$  \\ \hline
  \end{tabular}
\end{center}

\noindent By virtue of being calibrated, each of these submanifolds satisfy a strong area-minimizing property.  In particular, they are stable minimal submanifolds.  Moreover, by an argument using the Cartan-K\"{a}hler Theorem, Harvey and Lawson \cite{harvey1982calibrated} were able to show that submanifolds of each class exist locally in abundance.

Riemannian manifolds with special holonomy groups function as the background spaces for supersymmetric theories of physics. In this setting, calibrated submanifolds are related to supersymmetric cycles \cite{BECKER96}. Special Lagrangian submanifolds lie at the foundation of the SYZ formulation of mirror symmetry \cite{SYZ96}, and calibrated submanifolds in manifolds with holonomy groups $\text{G}_2$ and $\spsev$ are expected to play a similar role in theories of mirror symmetry for such manifolds \cite{ACHARYA98,GYZ03}.

\indent In fact, each of the classes of submanifolds described above make sense in an even more general class of ambient spaces: namely, that of (Riemannian) manifolds $M$ equipped with a $G$-structure, for $G = \text{SU}(n)$ or $\text{G}_2$ or $\text{Spin}(7)$ as appropriate.
\begin{center}
  \begin{tabular}{ | l | l |}
    \hline
    Submanifold & Ambient Manifold \\ \hline \hline
    Special Lagrangian $n$-fold & $2n$-manifold $M^{2n}$ with an $\text{SU}(n)$-structure \\ \hline
    Associative $3$-fold & $7$-manifold $M^7$ with a $\text{G}_2$-structure \\ \hline
    Coassociative $4$-fold & $7$-manifold $M^7$ with a $\text{G}_2$-structure \\ \hline
    Cayley $4$-fold & $8$-manifold $M^8$ with a $\text{Spin}(7)$-structure \\ \hline
  \end{tabular}
\end{center}

\noindent However, in this generalized setting, such submanifolds need not be minimal.  This raises the following: \\


\noindent \textbf{Minimality Problem:} Let $M$ be a manifold.  Characterize those $G$-structures (for $G = \text{SU}(n)$, $\text{G}_2$, $\text{Spin}(7)$) on $M$ for which every submanifold in $M$ of a given class (special Lagrangian, associative, etc.) is a minimal submanifold of $M$. \\


\indent We will completely solve the Minimality Problem in the contexts of associative $3$-folds, and coassociative $4$-folds, and Cayley $4$-folds by deriving simple formulas for their mean curvature.  The case of special Lagrangian $3$-folds is addressed in our preprint \cite{BalMadsLag19}.  \\ 
\indent Perhaps more fundamentally, in our generalized context the relevant submanifolds need not exist at all, even locally.  This raises the natural: \\

\noindent \textbf{Local Existence Problem:} Let $M$ be a manifold.  Characterize those $G$-structures (with $G$ as above) on $M$ for which submanifolds of a given class (special Lagrangian, etc.) exist locally at every point of $M$. \\


\indent In this work, we make progress towards the resolution of the Local Existence Problem in the setting of coassociatives.  More precisely, we obtain an explicit obstruction to the local existence of coassociative $4$-folds.  Analogous obstructions to the local existence of special Lagrangian $3$-folds are obtained in \cite{BalMadsLag19}.

\subsection{Results on Associative $3$-folds and Coassociative $4$-folds}

\indent \indent Let $(M^7, \varphi)$ be a $7$-manifold with a $\text{G}_2$-structure $\varphi \in \Omega^3(M)$.  The first-order local invariants of $\varphi$ are completely encoded in four differential forms, called the \textit{torsion forms} of the $\text{G}_2$-structure, denoted
$$(\tau_0, \tau_1, \tau_2, \tau_3) \in \Omega^0 \oplus \Omega^1 \oplus \Omega^2 \oplus \Omega^3.$$
These are defined by the equations
\begin{align*}
d\varphi & = \tau_0 \ast\!\varphi + 3\tau_1 \wedge \varphi + \ast\tau_3  \\
d\ast\! \varphi & = \ \ \ \ \ \ \ \ \ \ 4\tau_1 \wedge \ast \varphi + \tau_2 \wedge \varphi.
\end{align*}
together with the algebraic relations $\ast(\varphi \wedge \tau_2) = -\tau_2$ and $\tau_3 \wedge \varphi = 0$ and $\tau_3 \wedge \ast\varphi = 0$. \\
\indent In order to study associative $3$-folds and coassociative $4$-folds in $M$, we will break the torsion forms into $\text{SO}(4)$-irreducible pieces with respect to a certain splitting of $TM$.  Indeed, in \S\ref{ssect:G2RefTors}, we will decompose $\tau_0, \tau_1, \tau_2, \tau_3$ into $\text{SO}(4)$-irreducible components, writing
\begin{align*}
\tau_0 & = \tau_0 & \tau_2 & = (\tau_2)_{\mathsf{A}} + (\tau_2)_{1,3} + (\tau_2)_{2,0} \\
\tau_1 & = (\tau_1)_{\mathsf{A}} + (\tau_1)_{\mathsf{C}} & \tau_3 & = (\tau_3)_{0,0} + (\tau_3)_{0,4} + (\tau_3)_{2,2} + (\tau_3)_{1,3} + (\tau_3)_{\mathsf{C}}
\end{align*}
We will refer to the individual pieces
$$\tau_0, \ \ \ \ (\tau_1)_{\mathsf{A}}, (\tau_1)_{\mathsf{C}}, \ \ \ \ (\tau_2)_{\mathsf{A}}, (\tau_2)_{1,3}, (\tau_2)_{2,0}, \ \ \ \ (\tau_3)_{0,0}, (\tau_3)_{0,4}, (\tau_3)_{2,2}, (\tau_3)_{1,3}, (\tau_3)_{\mathsf{C}}$$
as \textit{refined torsion forms} (with respect to a certain splitting of $TM$).   The mean curvature of associatives and coassociatives can then be expressed purely in terms of the refined torsion.  In the sequel, we let $\sharp \colon T^*M \to TM$ denote the usual musical (index-raising) isomorphism. \\

\noindent \textbf{Theorem \ref{thm:MCAssoc} (Mean Curvature of Associatives):} The mean curvature vector $H$ of an associative $3$-fold in $M$ is given by
$$H = \textstyle -3 [(\tau_1)_{\mathsf{C}}]^\sharp - \frac{\sqrt{3}}{2} \left[(\tau_3)_{\mathsf{C}}\right]^\ddag$$
where $\ddag$ is a particular isometric isomorphism defined (\ref{eq:G2-DDagIsom}). \\
\indent In particular, a $\text{G}_2$-structure on $M$ has the property that every associative $3$-fold in $M$ is minimal if and only if $\tau_1 = \tau_3 = 0$.  Equivalently, if and only if $d\varphi = \lambda \ast\! \varphi$ for some constant $\lambda \in \mathbb{R}$.  \\

\noindent \textbf{Theorem \ref{thm:MCCoass}  (Mean Curvature of Coassociatives):} The mean curvature vector $H$ of a coassociative $4$-fold in $M$ is given by
$$H = \textstyle -4 [(\tau_1)_{\mathsf{A}}]^\sharp + \frac{\sqrt{6}}{3} \left[(\tau_2)_{\mathsf{A}}\right]^\natural$$
where $\natural$ is a particular isometric isomorphism defined in (\ref{eq:G2-NatIsom}). \\
\indent In particular, a $\text{G}_2$-structure on $M$ has the property that every coassociative $4$-fold in $M$ is minimal if and only if $\tau_1 = \tau_2 = 0$.  Equivalently, if and only if $d\ast\!\varphi = 0$.  \\

\indent These formulas can be regarded as a submanifold analogue of the curvature formulas derived by Bryant \cite{Bryant:2006aa} for $7$-manifolds with $\text{G}_2$-structures.  In the process of proving Theorem \ref{thm:MCCoass}, we will observe an obstruction to the local existence of coassociative $4$-folds: \\

\noindent \textbf{Theorem \ref{thm:CoassObs} (Local Obstruction to Coassociatives):} If a coassociative $4$-fold $\Sigma$ exists in $M$, then the following relation holds at points of $\Sigma$:
$$\tau_0 = \textstyle -\frac{\sqrt{42}}{7}\,[(\tau_3)_{0,0}]^\dagger$$
where $\dagger$ is an isometric isomorphism defined in (\ref{eq:G2-DagIsom}). \\
\indent In particular, if $\tau_3 = 0$ and $\tau_0$ is non-vanishing, then $M$ admits no coassociative $4$-folds (even locally). \\

\noindent \textbf{Corollary \ref{cor:CoassObs}:} Fix $x \in M$. If every coassociative $4$-plane in $T_xM$ is tangent to a coassociative $4$-fold, then $\tau_0|_x = 0$ and $\tau_3|_x = 0$. \\

\noindent Note that Theorem \ref{thm:CoassObs} generalizes the well-known fact that nearly-parallel $\text{G}_2$-structures (viz., those with $\tau_1 = \tau_2 = \tau_3 = 0$ and $\tau_0$ non-vanishing) admit no coassociative $4$-folds.

\subsection{Results on Cayley $4$-folds}

\indent \indent Let $(M^8, \Phi)$ be an $8$-manifold with a $\text{Spin}(7)$-structure $\Phi \in \Omega^4(M)$.  The first-order local invariants of $\Phi$ are completely encoded in two differential forms, $\tau_1 \in \Omega^1(M)$ and $\tau_3 \in \Omega^3(M)$, called the \textit{torsion forms} of the $\text{Spin}(7)$-structure.  They are defined by equation
$$d\Phi = \tau_1 \wedge \Phi + \ast \tau_3.$$
To study Cayley $4$-folds in $M$, we will break the torsion forms into $\text{Spin}^h(4)$-irreducible pieces with respect to a certain splitting of $TM$, where here $\text{Spin}^h(4) = (\text{SU}(2) \times \text{SU}(2) \times \text{SU}(2))/\mathbb{Z}_2$ is the stabilizer of a Cayley $4$-plane.  Indeed, in $\S$\ref{ssect:CayRefTors}, we will decompose $\tau_1$ and $\tau_3$ into irreducible pieces, writing
\begin{align*}
\tau_1 & = (\tau_1)_{\mathsf{K}} + (\tau_1)_{\mathsf{L}} \\
\tau_3 & = (\tau_3)_{\mathsf{K}} + (\tau_3)_{\mathsf{L}} + (\tau_3)_{0,3,1} + (\tau_3)_{2,1,1} + (\tau_3)_{1,3,0} + (\tau_3)_{1,1,2}
\end{align*}
and refer to the individual pieces
$$(\tau_1)_{\mathsf{K}},  (\tau_1)_{\mathsf{L}}, \ \ \ \ \ (\tau_3)_{\mathsf{K}}, (\tau_3)_{\mathsf{L}}, (\tau_3)_{0,3,1},  (\tau_3)_{2,1,1}, (\tau_3)_{1,3,0}$$
as \textit{refined torsion forms} (with respect to a certain splitting of $TM$).  As with associative and coassociative submanifolds, the mean curvature of Cayley submanifolds can be expressed purely in terms of the refined torsion.  As before, we let $\sharp \colon T^*M \to TM$ denote the musical isomorphism. \\

\noindent \textbf{Theorem \ref{thm:CayleyMC} (Mean Curvature of Cayleys):} The mean curvature vector $H$ of a Cayley $4$-fold in $M$ is given by
\begin{align*}
H = - \left[ \left( \tau_1 \right)_\Ls \right]^\sharp - \tfrac{\sqrt{42}}{7} \left[ \left( \tau_3 \right)_\Ls \right]^\dagger
\end{align*}
where $\dagger$ is a particular isometric isomorphism defined in Definition \ref{defn:dagger}. \\
\indent In particular, a $\spsev$-structure on $M$ has the property that every Cayley $4$-fold in $M$ is minimal if and only if $d\Phi = 0$.


\subsection{Organization}

\indent \indent In $\S$\ref{sect:G2}, we study associative $3$-folds and coassociative $4$-folds in $7$-manifolds with $\text{G}_2$-structures.  We use $\S$\ref{ssect:G2Prelims} to recall basic aspects of $\text{G}_2$ geometry and set notation.  In $\S$\ref{ssect:SO4reps}, we will decompose various $\text{G}_2$-modules (viz., $\Lambda^k(\mathbb{R}^7)$ and $\text{Sym}^2(\mathbb{R}^7)$) into $\text{SO}(4)$-irreducible pieces, where we think of $\text{SO}(4)$ as the stabilizer of an associative (or coassociative) plane.  These $\text{SO}(4)$-submodules are central to the geometry of associative and coassociative submanifolds, and we will take care to provide explicit descriptions of these submodules as much as possible. \\
\indent In $\S$\ref{ssect:G2RefTors}, we will define the refined torsion forms by way of the $\text{SO}(4)$-irreducible decompositions obtained in $\S$\ref{ssect:SO4reps}.  Once these refined torsion forms are defined, we will express them in terms of a local $\text{SO}(4)$-frame in order to perform calculations. \\
\indent Finally, in $\S$\ref{ssect:MCAssoc} we will apply this machinery to the study of associative $3$-folds, proving Theorem  \ref{thm:MCAssoc}.  Similarly, in \S\ref{ssect:MCCoassoc}, we turn to coassociative $4$-folds, proving Theorem \ref{thm:CoassObs}, Corollary \ref{cor:CoassObs}, and Theorem \ref{thm:MCCoass}. \\
\indent The structure of \S\ref{sect:Cayley} is completely analogous.  In brief, we use $\S$\ref{ssect:Spin7prelim} to recall the basic aspects of $\text{Spin}(7)$ geometry and set notation.  In $\S$\ref{ssect:sphreps}, we decompose various $\text{Spin}(7)$-modules into $\text{Spin}^h(4)$-representations, where we continue to write $\text{Spin}^h(4) = (\text{SU}(2) \times \text{SU}(2) \times \text{SU}(2))/\mathbb{Z}_2$, regarded as the stabilizer of a Cayley $4$-plane.  In $\S$\ref{ssect:CayRefTors}, we will define the corresponding refined torsion forms by way of $\text{Spin}^h(4)$-representation theory, and then express them in terms of a local $\text{Spin}^h(4)$-frame.  Finally, in $\S$\ref{ssect:MCCay}, we will study Cayley $4$-folds and prove Theorem \ref{thm:CayleyMC}. \\


\noindent \textbf{Acknowledgements:} This work has benefited from conversations with Robert Bryant, Jason Lotay, Thomas Madsen, and Alberto Raffero.  The second author would also like to thank McKenzie Wang for his guidance and encouragement. The first author thanks the Simons Collaboration on Special Holonomy in Geometry, Analysis and Physics for support during the period in which this article was written.  \\

\indent After this work was completed, it was pointed out to us that the mean curvature formula for associative $3$-folds, Theorem \ref{thm:MCAssoc}, was derived earlier by Paul Reynolds in his 2011 PhD thesis \cite{reynolds11}.  In that work, Reynolds also derived an expression for the mean curvature of Cayley $4$-folds, but his expression is not completely split into irreducible pieces.


\section{Associative $3$-Folds and Coassociative $4$-Folds in $\text{G}_2$-Structures} \label{sect:G2}


\indent \indent Our goal in this section is to derive formulas (Theorems \ref{thm:MCAssoc} and \ref{thm:MCCoass}) for the mean curvature of associative $3$-folds and coassociative $4$-folds in $7$-manifolds equipped with a $\text{G}_2$-structure.  We will also derive an obstruction (Theorem \ref{thm:CoassObs}) to the local existence of coassociative $4$-folds.

These formulas and obstructions will be phrased in terms of \textit{refined torsion forms}, which we will define in \S\ref{ssect:G2RefTors}.  These refined forms are essentially the $\text{SO}(4)$-irreducible pieces of the usual torsion forms $\tau_0, \tau_1, \tau_2, \tau_3$ of a $\text{G}_2$-structure.  As such, we will devote \S\ref{ssect:SO4reps} to the relevant $\text{SO}(4)$-representation theory needed to decompose $\tau_0, \tau_1, \tau_2, \tau_3$.

\subsection{Preliminaries}\label{ssect:G2Prelims}

\indent \indent In this subsection, we define both the ambient spaces (in \S\ref{sssect:G2Structsn7Mflds}) and submanifolds (in \S\ref{sssect:AssocAndCoassoc}) of interest.  We also use this subsection to fix notation and clarify conventions.

\subsubsection{$\text{G}_2$-Structures on Vector Spaces}\label{sssect:G2structs}

\indent \indent Let $V = \mathbb{R}^7$ equipped with the standard inner product $\langle \cdot, \cdot \rangle$,  norm $\Vert \cdot \Vert$, and an orientation.  Let $\{e_1, \ldots, e_7\}$ denote the standard (orthonormal) basis of $V$, and let $\{e^1, \ldots, e^7\}$ denote the corresponding dual basis of $V^*$.  The \textit{associative $3$-form} is the alternating $3$-form $\phi_0 \in \Lambda^3(V^*)$ defined by
\begin{equation*}
\phi_0 = e^{123} + e^1 \wedge (e^{45} + e^{67}) + e^2 \wedge (e^{46} - e^{57}) + e^3 \wedge (-e^{47} - e^{56})
\end{equation*}
The \textit{coassociative $4$-form} is the alternating $4$-form $\ast\phi_0 \in \Lambda^4(V^*)$ given by the Hodge dual $\ast$ of $\phi_0$.  Explicitly:
\begin{equation*}
\ast\phi_0 = e^{4567} + e^{23} \wedge (e^{45} + e^{67}) + e^{13} \wedge (- e^{46} + e^{57} ) + e^{12} \wedge (- e^{47} - e^{56}).
\end{equation*}
For calculations, it will be convenient to express $\phi_0$ and $\ast \phi_0$ in the form
\begin{align*}
\phi_0 & = \textstyle \frac{1}{6}\epsilon_{ijk}\,e^{ijk} & \ast\phi_0 & = \textstyle \frac{1}{24}\epsilon_{ijk\ell}\,e^{ijk\ell}
\end{align*}
where the constants $\epsilon_{ijk}, \epsilon_{ijk\ell} \in \{-1,0,1\}$ are defined by this formula.  For example, $\epsilon_{123} = \epsilon_{145} = 1$ and $\epsilon_{347} = \epsilon_{356} = -1$.  Identities involving the $\epsilon$-symbols are given in \cite{Bryant:2006aa}.
\begin{rmk}
	The associative and coassociative forms admit simple descriptions via the algebra of the octonions $\mathbb{O}$. \\
	\indent Equip $\mathbb{O} \simeq \mathbb{R}^8$ with the standard (euclidean) inner product and split $\mathbb{O} = \text{Re}(\mathbb{O}) \oplus \text{Im}(\mathbb{O}) \simeq \mathbb{R} \oplus \mathbb{R}^7$, where $\text{Re}(\mathbb{O}) := \text{span}_{\mathbb{R}}(1)$ is the real line and $\text{Im}(\mathbb{O}) := \text{Re}(\mathbb{O})^\perp$ is its orthogonal complement.  Under the identification $V \simeq\text{Im}(\mathbb{O})$, the associative and coassociative forms are given by
	\begin{align*}
	\phi_0(x,y,z) & = \langle x, y \times z \rangle \\
	\ast\phi_0(x,y,z,w) & = \textstyle \frac{1}{2}\langle x, [y,z,w]\rangle,
	\end{align*}
	for $x,y,z \in V$, where $y \times z := \text{Im}(\overline{z}y) = \frac{1}{2}(\overline{z}y - \overline{y}z)$ is the octonionic cross product, and $[y,z,w] := (yz)w - y(zw)$ is the \textit{associator}, measuring the failure of associativity of multiplication in $\text{Im}(\mathbb{O})$.  See \cite{harvey1982calibrated} for a proof.
\end{rmk}

\indent Consider the $\text{GL}(V)$-action on $\Lambda^3(V^*)$ given by pullback: $A \cdot \gamma := A^*\gamma$ for $A \in \text{GL}(V)$ and $\gamma \in \Lambda^3(V^*)$.  It is a classical result of Schouten (see [Bryant 87] for a proof) that the stabilizer of $\phi_0 \in \Lambda^3(V^*)$ is the compact Lie group $\text{G}_2$, i.e.:
$$\text{G}_2 \cong \{A \in \text{GL}(V) \colon A^*\phi_0 = \phi_0\}.$$
We let $\Lambda^3_+(V^*)$ denote the orbit of $\phi_0 \in \Lambda^3(V^*)$ under this $\text{G}_2$-action, i.e.:
$$\Lambda^3_+(V^*) := \{A^*\phi_0 \colon A \in \text{GL}(V)\} \cong \frac{\text{GL}(V)}{\text{G}_2}.$$
In \cite{Bryant:2006aa}, it is noted that $\Lambda^3_+(V^*) \subset \Lambda^3(V^*) \simeq \mathbb{R}^{35}$ is an open subset with two connected components, each diffeomorphic to $\mathbb{RP}^7 \times \mathbb{R}^{28}$. \\

\indent The isomorphism $\text{G}_2 \cong \{A \in \text{GL}(V) \colon A^*\phi_0 = \phi_0\}$ lets us regard $\text{G}_2$ as a subgroup of $\text{GL}(V)$, which in turn lets us view $V \simeq \mathbb{R}^7$ as a faithful $\text{G}_2$-representation.  It can be shown (see [Bryant 87]) that this $\text{G}_2$-representation is irreducible. \\
\indent However, the induced $\text{G}_2$-representations on $\Lambda^k(V^*)$ for $2 \leq k \leq 5$ are not irreducible.  Indeed, $\Lambda^2(V^*)$ decomposes into irreducible $\text{G}_2$-modules as
$$\Lambda^2(V^*) = \Lambda^2_7 \oplus \Lambda^2_{14},$$
where
\begin{align*}
\Lambda^2_7 & = \{\beta \in \Lambda^2(V^*) \colon \ast\!(\phi_0 \wedge \beta) = 2\beta\} \\
\Lambda^2_{14} & = \{\beta \in \Lambda^2(V^*) \colon \ast\!(\phi_0 \wedge \beta) = -\beta\}
\end{align*}
Similarly, $\Lambda^3(V^*)$ decomposes into irreducible $\text{G}_2$-modules as
$$\Lambda^3(V^*) = \Lambda^3_1 \oplus \Lambda^3_7 \oplus \Lambda^3_{27}$$
where
\begin{align*}
\Lambda^3_1 & = \mathbb{R}\phi_0 \\
\Lambda^3_7 & = \{\ast(\alpha \wedge \phi_0) \colon \alpha \in \Lambda^1\} \\
\Lambda^3_{27} & = \{\gamma \in \Lambda^3 \colon \gamma \wedge \phi_0 = 0 \text{ and } \gamma \wedge \ast\phi_0 = 0\}.
\end{align*}
In each case, $\Lambda^k_\ell$ is an irreducible $\text{G}_2$-module of dimension $\ell$. Via the Hodge star $\ast \colon \Lambda^k(V^*) \to \Lambda^{7-k}(V^*)$, one can obtain similar decompositions of $\Lambda^4(V^*)$ and $\Lambda^5(V^*)$. \\

\indent In the sequel, we will always equip $\Lambda^k(V^*)$ with the usual inner product, also denoted $\langle \cdot, \cdot \rangle$, given by declaring
\begin{equation}
\{e^I \colon I \text{ increasing multi-index}\} \label{eq:G2-InnerProd}
\end{equation}
to be an orthonormal basis.  We let $\Vert \cdot \Vert$ denote the corresponding norm.  \\

\indent For our calculations in \S\ref{ssect:SO4reps}, we will need the $\text{G}_2$-equivariant map $\mathsf{i}$, defined on decomposable elements of $\text{Sym}^2_0(V^*)$ as follows:
\begin{align}
\mathsf{i} \colon \text{Sym}^2_0(V^*) & \to \Lambda^3(V^*) \label{eq:G2-imap} \\
\mathsf{i}(\alpha \circ \beta) & = \alpha \wedge \ast(\beta \wedge \ast \phi_0) + \beta \wedge \ast(\alpha \wedge \ast\phi_0). \notag
\end{align}
It is shown in \cite{Bryant:2006aa} that the image of $\mathsf{i}$ is $\Lambda^3_{27}$, so that the map with restricted image $\mathsf{i} \colon \text{Sym}^2_0(V^*) \to \Lambda^3_{27}$ is an isomorphism of $\text{G}_2$-modules.  It is also remarked that with respect to the orthonormal basis $\{e^1, \ldots, e^7\}$ of $V^*$, one has
$$\mathsf{i}(h_{ij}\,e^i \circ e^j) = \epsilon_{ik\ell}\,h_{ij}\,e^{jk\ell}.$$
To invert $\mathsf{i}$, one can use the map
\begin{align*}
\mathsf{j} \colon \Lambda^3_{27}(V^*) & \to \text{Sym}^2_0(V^*) \\
\mathsf{j}(\gamma)(v,w) & = \ast( \iota_v\phi_0 \wedge \iota_w\phi_0 \wedge \gamma)
\end{align*}
which satisfies $\mathsf{j} \circ \mathsf{i} = 8\,\text{Id}_{\text{Sym}^2_0(V^*)}$. \\

\indent Finally, we remark that from the associative $3$-form $\phi_0$, one can recover the inner product $\langle \cdot, \cdot \rangle$ and volume form $\text{vol} = e^{1 \cdots 7}$ via
\begin{subequations} \label{eq:G2-VolRel}
\begin{align} 
\langle X,Y \rangle\,\text{vol} & = \textstyle \frac{1}{6}\,(\iota_X\phi_0) \wedge (\iota_Y \phi_0) \wedge \phi_0  \\
\text{vol} & = \phi_0 \wedge \ast \phi_0. 
\end{align}
\end{subequations}
From these identities, one can show that, in fact, $\text{G}_2$ preserves both $\langle \cdot, \cdot \rangle$ and the orientation on $V$, so we may regard $\text{G}_2 \leq \text{SO}(V, \langle \cdot, \cdot \rangle) \simeq \text{SO}(7)$.

\subsubsection{$\text{G}_2$-Structures on $7$-Manifolds}\label{sssect:G2Structsn7Mflds}

\begin{defn}
	Let $M$ be an oriented $7$-manifold.  A \textit{$\text{G}_2$-structure} on $M$ is a differential $3$-form $\varphi \in \Omega^3(M)$ such that $\varphi|_x \in \Lambda^3_+(T_x^*M)$ at each $x \in M$.  That is, at each $x \in M$, there exists a coframe $u \colon T_xM \to \mathbb{R}^7$ for which $\varphi|_x = u^*(\phi_0)$.
\end{defn}

Intuitively, a $\text{G}_2$-structure is a smooth identification of each tangent space $T_xM$ with $\text{Im}(\mathbb{O})$ in such a way that $\varphi|_x$ and $\phi_0$ are aligned: $(T_xM, \varphi|_x) \simeq (\text{Im}(\mathbb{O}), \phi_0)$.  We remark that a $7$-manifold $M$ admits a $\text{G}_2$-structure if and only if it is orientable and spin: see \cite{Bryant:2006aa} for a proof. \\
\indent Every $\text{G}_2$-structure $\varphi$ on $M$ induces a Riemannian metric $g_\varphi$ and an orientation form $\text{vol}_\varphi$ on $M$ via the formulas (\ref{eq:G2-VolRel}), reflecting the inclusion $\text{G}_2 \leq \text{SO}(7)$.  We caution, however, that the association $\varphi \mapsto g_\varphi$ is not injective: different $\text{G}_2$-structures may induce the same Riemannian metric.  For a discussion of this point, see \cite{Bryant:2006aa}.

The first-order local invariants of a $\text{G}_2$-structure are completely encoded in a certain $\text{G}_2$-equivariant function
$$T \colon F_{\text{G}_2} \to \Lambda^0 \oplus \Lambda^1 \oplus \Lambda^2_{14} \oplus \Lambda^3_{27} \simeq \mathbb{R}^{49}$$
called the \textit{intrinsic torsion function}, defined on the total space of the $\text{G}_2$-frame bundle $F_{\text{G}_2} \to M$ over $M$.  We think of $T$ as describing the $1$-jet of the $\text{G}_2$-structure.

The intrinsic torsion function is somewhat technical to define --- the interested reader can find more information in \cite{MR1062197} and \cite{MR1004008} --- but several equivalent reformulations are available.  Most conveniently for our purposes: the intrinsic torsion function of a $\text{G}_2$-structure is equivalent to the data of the $4$-form $d\varphi$ and the $5$-form $d\!\ast\!\varphi$.  In \cite{Bryant:2006aa}, the exterior derivatives of $\varphi$ and $\ast\varphi$ are shown to take the form
\begin{subequations} \label{eq:G2-TorsionEqns}
\begin{align}
d\varphi & = \tau_0 \ast\!\varphi + 3\tau_1 \wedge \varphi + \ast\tau_3  \\
d\ast\! \varphi & = \ \ \ \ \ \ \ \ \ \ 4\tau_1 \wedge \ast \varphi + \tau_2 \wedge \varphi.
\end{align}
\end{subequations}
where
$$(\tau_0, \tau_1, \tau_2, \tau_3) \in \Gamma(\Lambda^0(T^*M) \oplus \Lambda^1(T^*M) \oplus \Lambda^2_{14}(T^*M) \oplus \Lambda^3_{27}(T^*M))$$
We refer to $\tau_0, \tau_1, \tau_2, \tau_3$ as the \textit{torsion forms} of the $\text{G}_2$-structure. \\ 
\indent Following standard conventions, we let $W_1, W_7, W_{14}, W_{27}$ denote the vector bundles $\Lambda^0(T^*M)$, $\Lambda^1(T^*M)$, $\Lambda^2_{14}(T^*M)$, $\Lambda^3_{27}(T^*M)$, respectively.  Consider the set $\mathcal{S}$ consisting of the $2^4 = 16$ vector bundles
$$\mathcal{S} = \left\{0, \ W_i, \ W_i \oplus W_j, \ W_i \oplus W_j \oplus W_k, \ W_1 \oplus W_7 \oplus W_{14} \oplus W_{27} \colon i,j,k \in \{1,7,14,27\}\right\}\!.$$

\begin{defn}
	Let $E \in \mathcal{S}$ be a vector bundle on the list above. We say that a $\text{G}_2$-structure belongs to the \textit{torsion class $E$} if and only if the torsion forms of the $\text{G}_2$-structure $(\tau_0, \tau_1, \tau_2, \tau_3) \in \Gamma(W_1 \oplus W_7 \oplus W_{14} \oplus W_{27})$ is valued in $E \subset W_1 \oplus W_7 \oplus W_{14} \oplus W_{27}$.
\end{defn}
\noindent For example, a $\text{G}_2$-structure belongs to the torsion class $W_7 \oplus W_{27}$ if and only if $\tau_0 = \tau_2 = 0$.

\subsubsection{Associative $3$-Folds and Coassociative $4$-Folds}\label{sssect:AssocAndCoassoc}


\indent \indent Let $(M^7, \varphi)$ be a $7$-manifold with a $\text{G}_2$-structure, and consider a tangent space $(T_xM, \varphi|_x) \simeq (V, \phi_0)$.  The vector space $(V, \phi_0)$ possesses two distinguished classes of subspaces --- associative $3$-planes and coassociative $4$-planes (to be defined shortly) ---  first studied by Harvey and Lawson \cite{harvey1982calibrated} in their work on calibrations.  Indeed, they observed that $\phi_0$ and $\ast\phi_0$ enjoy the following remarkable property:

\begin{prop}[\cite{harvey1982calibrated}]\label{prop:G2Comass}
	The associative $3$-form $\phi_0$ and coassociative $4$-form $\ast\phi_0$ have co-mass one, meaning that
	\begin{align*}
	\left|\phi_0(x,y,z)\right| & \leq 1 & \left|\ast\phi_0(x,y,z,w)\right| & \leq 1
	\end{align*}
	for every orthonormal set $\{x,y,z,w\}$ in $V \simeq \mathbb{R}^7$.
\end{prop}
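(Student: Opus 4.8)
The plan is to prove Proposition~\ref{prop:G2Comass} by reducing each inequality to a pointwise algebraic statement and then exploiting the transitivity of $\text{G}_2$ (equivalently $\text{SO}(7)$ via the inclusion $\text{G}_2 \le \text{SO}(7)$) on suitable configurations of orthonormal vectors. For the first inequality, I would first observe that, since $\phi_0$ is alternating and trilinear, it suffices to bound $\phi_0(x,y,z)$ over orthonormal \emph{triples}, and in fact over the compact manifold of such triples the function attains a maximum. The key step is to show this maximum equals $1$: it is at least $1$ because $\phi_0(e_1,e_2,e_3) = e^{123}(e_1,e_2,e_3) = 1$ directly from the explicit formula for $\phi_0$. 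For the upper bound, I would use the octonionic description from the Remark, $\phi_0(x,y,z) = \langle x, y\times z\rangle$ with $y \times z = \operatorname{Im}(\bar z y)$: by Cauchy--Schwarz, $|\phi_0(x,y,z)| \le \|x\|\,\|y \times z\| = \|y\times z\|$, so everything comes down to the normed-algebra estimate $\|y \times z\| \le \|y\|\,\|z\|$ for $y \perp z$ in $\operatorname{Im}(\mathbb{O})$, which follows from the multiplicativity of the octonion norm together with the fact that $\bar z y$ has no real part when $y \perp z$ (so that $\|\operatorname{Im}(\bar z y)\| = \|\bar z y\| = \|z\|\,\|y\|$).

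For the second inequality, the cleanest route is again via the algebraic identity for $\ast\phi_0$ from the Remark, namely $\ast\phi_0(x,y,z,w) = \tfrac12 \langle x, [y,z,w]\rangle$, where $[y,z,w] = (yz)w - y(zw)$ is the associator. Cauchy--Schwarz gives $|\ast\phi_0(x,y,z,w)| \le \tfrac12 \|[y,z,w]\|$, so it suffices to show $\|[y,z,w]\| \le 2$ when $\{y,z,w\}$ is orthonormal in $\operatorname{Im}(\mathbb{O})$. This is a standard property of the associator on octonions: the associator is alternating on $\operatorname{Im}(\mathbb{O})$, and by Artin's theorem any two octonions generate an associative subalgebra, so $[y,z,w]$ vanishes unless $y,z,w$ are ``independent'' in a suitable sense; on an orthonormal triple one computes $\|[y,z,w]\| = 2\|y\|\,\|z\|\,\|w\| = 2$ using the multiplicativity of the norm. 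Alternatively, and perhaps more in keeping with the paper's conventions, one can avoid octonions entirely and argue directly: $\ast\phi_0$ is itself a calibration (this is Harvey--Lawson's theorem and is cited), but since we want the comass bound \emph{as input} rather than output, the honest self-contained approach is to bound $\ast\phi_0(x,y,z,w)$ by extending $\{x,y,z,w\}$ to an orthonormal basis and expanding $\ast\phi_0$ in that basis using its explicit formula, noting that each of the four monomials contributes a coefficient in $\{-1,0,1\}$ and that the supporting $4$-planes of distinct monomials are ``far apart'' enough that no single orthonormal $4$-frame can align with more than one of them.

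I would then complete the argument by the transitivity reduction: $\text{G}_2$ acts transitively on orthonormal pairs $(x,y)$ in $V$ (a standard fact, following from transitivity on $V \setminus \{0\}$ together with transitivity of the stabilizer $\text{SU}(3)$ of a unit vector on the orthogonal $\mathbb{C}^3$), and on an orthonormal $3$-frame $(x,y,z)$ the residual stabilizer acts on the orthogonal $\mathbb{R}^4$, which lets us normalize the configuration to the standard one before applying the explicit formula. The same idea handles the coassociative case after noting that the stabilizer of an orthonormal $3$-frame still acts with enough freedom on the remaining $4$-dimensional space. The main obstacle is making the coassociative estimate genuinely self-contained without circularly invoking that $\ast\phi_0$ is a calibration: the octonionic route via the associator identity sidesteps this, so I expect the real work to be assembling the octonion facts (norm multiplicativity, the Moufang/alternativity identities, the alternating nature of the associator on $\operatorname{Im}(\mathbb{O})$) into the bound $\|[y,z,w]\| \le 2$, and I would cite \cite{harvey1982calibrated} for the detailed verification rather than reproduce it in full here.
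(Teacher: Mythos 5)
The paper does not give its own proof of this proposition; it states it with attribution to Harvey--Lawson and remarks afterward that proofs can be found in \cite{harvey1982calibrated} and \cite{salamon2010notes}. So there is no in-paper argument to compare against, and I am reviewing your proposal on its own terms.

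Your argument for the associative inequality is sound: the identity $\phi_0(x,y,z) = \langle x, y\times z\rangle$, Cauchy--Schwarz, and the observation that for orthogonal $y,z \in \operatorname{Im}(\mathbb{O})$ the product $\bar z y$ is purely imaginary so that $\lVert y\times z\rVert = \lVert \bar z y\rVert = \lVert y\rVert\,\lVert z\rVert$, together give the bound. That is the standard route.

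The coassociative half has a genuine gap. You assert that for an orthonormal triple $\{y,z,w\}$ in $\operatorname{Im}(\mathbb{O})$ one has $\lVert [y,z,w]\rVert = 2\lVert y\rVert\,\lVert z\rVert\,\lVert w\rVert = 2$ ``using the multiplicativity of the norm.'' This is false: the associator vanishes identically on every associative $3$-plane (this is precisely item (2) of Proposition \ref{prop:AssocDefn}), so $\lVert[y,z,w]\rVert$ ranges over the whole interval $[0,2]$ as the orthonormal triple varies. The correct statement is an inequality, and the correct underlying identity (essentially Harvey--Lawson's) is that for orthonormal $y,z,w \in \operatorname{Im}(\mathbb{O})$,
\begin{equation*}
\lVert [y,z,w]\rVert^2 \;+\; 4\,\phi_0(y,z,w)^2 \;=\; 4,
\end{equation*}
coming from the fact that $\operatorname{Re}$ and $\operatorname{Im}$ of the normalized triple cross product recover $\phi_0(y,z,w)$ and (a multiple of) $[y,z,w]$, together with the triple cross product having unit norm on orthonormal triples. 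None of this follows from norm multiplicativity alone; it needs the Moufang/alternativity identities to control $(\bar z y)w$ versus $\bar z(yw)$. So ``$\lVert[y,z,w]\rVert = 2$'' should be ``$\lVert[y,z,w]\rVert \le 2$,'' and that inequality is exactly the content that still has to be proved. The ``alternative'' direct argument you sketch (coefficients in $\{-1,0,1\}$ and supporting $4$-planes being ``far apart'') is not a proof: the supporting $4$-planes of the monomials of $\ast\phi_0$ are not mutually orthogonal, and a generic orthonormal $4$-frame does pick up contributions from several monomials simultaneously; controlling the resulting cancellation is precisely the nontrivial content of the comass computation.

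A secondary issue: the ``transitivity reduction'' at the end is circular as stated. $\text{G}_2$ does not act transitively on orthonormal $3$-frames in $V$, precisely because $\phi_0(x,y,z)$ is a $\text{G}_2$-invariant that is not constant on them; you therefore cannot ``normalize the configuration to the standard one'' without already knowing the range of $\phi_0$, which is what you are trying to bound. The octonionic argument you give first is the one that actually works, and the transitivity paragraph should be dropped or recast (e.g., $\text{G}_2$ does act transitively on orthonormal $2$-frames, which can be used to reduce to a one-parameter family of triples on which $\phi_0$ can be computed explicitly).
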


\indent In light of this proposition, it is natural to examine more closely those $3$-planes $A \in \text{Gr}_3(V)$ (respectively, $4$-planes $C \in \text{Gr}_4(V)$) for which $\left|\phi_0(A)\right| = 1$ (resp., $\left|\ast\phi_0(C)\right| = 1$).

\begin{prop}[\cite{harvey1982calibrated}] \label{prop:AssocDefn} 
	Let $A \in \text{Gr}_3(V)$ be a $3$-plane in $V$.  The following are equivalent:
	\begin{enumerate}[nosep]
		\item  If $\{u,v,w\}$ orthonormal basis of $A$, then $\phi_0(u,v,w) = \pm 1$.
		\item For all $u,v,w \in A$, we have $[u,v,w] = 0$.
		\item $A = \text{span}\{u,v, u \times v\}$ for some linearly independent set $\{u,v\}$.
	\end{enumerate}
	If any of these conditions hold, we say that $A$ is an \emph{associative $3$-plane}.
\end{prop}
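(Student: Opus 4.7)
The plan is to establish the chain $(3) \Rightarrow (1) \Rightarrow (3)$ together with $(1) \Leftrightarrow (2)$, relying on two identities coming from the octonionic structure on $V \simeq \mathrm{Im}(\mathbb{O})$.  The first is the cyclic symmetry
\[
\phi_0(x,y,z) = \langle x, y \times z \rangle = \langle x \times y, z \rangle,
\]
together with $y \times z \perp y, z$ and $\|y \times z\|^2 = \|y\|^2\|z\|^2 - \langle y,z\rangle^2$, all direct consequences of the definition $y \times z = \frac{1}{2}(\bar{z}y - \bar{y}z)$ and the fact that $\mathbb{O}$ is a normed algebra.  The second is the Harvey--Lawson identity
\[
\phi_0(u,v,w)^2 + \tfrac{1}{4}\|[u,v,w]\|^2 = \|u \wedge v \wedge w\|^2,
\]
which is the sharpened form of the comass inequality in Proposition \ref{prop:G2Comass}.

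For $(3) \Rightarrow (1)$, I apply Gram--Schmidt to $\{u,v\}$ to obtain an orthonormal basis $\{u',v'\}$ of $\mathrm{span}\{u,v\}$; bilinearity of $\times$ gives $\mathrm{span}\{u',v',u' \times v'\} = A$, and this triple is orthonormal.  Then $\phi_0(u', v', u' \times v') = \langle u' \times v', u' \times v' \rangle = 1$ closes the direction.  For $(1) \Rightarrow (3)$, given an orthonormal basis $\{u,v,w\}$ of $A$ with $\phi_0(u,v,w) = \pm 1$, rewriting as $\pm 1 = \langle u \times v, w\rangle$ and using $\|u \times v\| = \|w\| = 1$ forces equality in Cauchy--Schwarz, so $w = \pm u \times v \in A$ and hence $A = \mathrm{span}\{u, v, u \times v\}$.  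For $(1) \Leftrightarrow (2)$, the Harvey--Lawson identity evaluated on an orthonormal basis $\{u,v,w\}$ of $A$ reads $\phi_0(u,v,w)^2 + \tfrac{1}{4}\|[u,v,w]\|^2 = 1$, so $\phi_0(u,v,w) = \pm 1$ iff $[u,v,w] = 0$.  To promote vanishing of the associator on a basis to vanishing on all triples in $A$, I use that $[\cdot,\cdot,\cdot]$ is trilinear and totally alternating on $\mathrm{Im}(\mathbb{O})$; the converse is immediate.

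The main technical input is the Harvey--Lawson identity itself, which is not proved in the excerpt.  The cleanest direct route is to expand $[u,v,w] = (uv)w - u(vw)$ using the alternative (Moufang) identities of $\mathbb{O}$ and to compute $\|[u,v,w]\|^2$ in the octonionic norm; since $\mathrm{G}_2$ acts transitively on orthonormal pairs in $\mathrm{Im}(\mathbb{O})$, one can reduce to $u = e_1$, $v = e_2$, whereupon the remaining identity becomes a one-parameter verification in $w$ against the coordinate formula for $\phi_0$ given in $\S$\ref{sssect:G2structs}.  Alternatively, one may simply invoke the original derivation in Harvey--Lawson \cite{harvey1982calibrated}.
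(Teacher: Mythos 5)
The paper does not prove this proposition itself; after stating it alongside Propositions \ref{prop:CoassocDefn} and \ref{prop:G2ActStab}, the authors remark that proofs can be found in \cite{harvey1982calibrated} and \cite{salamon2010notes}. Your argument is precisely the classical Harvey--Lawson route through octonion algebra, and it is correct. Two small points are worth spelling out. First, in $(3)\Rightarrow(1)$ you verify $\phi_0=1$ only on the single orthonormal triple $\{u',v',u'\times v'\}$, whereas condition $(1)$ quantifies over \emph{all} orthonormal bases of $A$; you should add the one-line observation that $\phi_0|_A$ is a top-degree form on the three-dimensional space $A$ and so scales by $\det B$ under $B\in\mathrm{O}(3)$, which is what produces the $\pm1$ for every such basis. (The same remark is implicitly used in $(1)\Leftrightarrow(2)$ to justify restricting attention to a single orthonormal basis.) Second, the associator equality
\[
\phi_0(u,v,w)^2 + \tfrac{1}{4}\bigl\|[u,v,w]\bigr\|^2 = \|u\wedge v\wedge w\|^2
\]
carries the entire weight of $(1)\Leftrightarrow(2)$; you are right to single it out as the genuine algebraic input, and it is exactly the Associator Equality of \cite{harvey1982calibrated}, so invoking it directly is entirely appropriate and matches what the cited references do. With those two points made explicit, your proof is complete and follows the same route as the references the paper points to.
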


\begin{prop}[\cite{harvey1982calibrated}]\label{prop:CoassocDefn} 
	 Let $C \in \text{Gr}_4(V)$ be a $4$-plane in $V$.   The following are equivalent:
	 \begin{enumerate}[nosep]
	 	\item If $\{x,y,z,w\}$ is an orthonormal basis of $C$, then $\ast\phi_0(x,y,z,w) = \pm 1$.
	 	\item $C^\perp$ is associative.
	 	\item $\phi_0|_C = 0$.
	 \end{enumerate}
	If any of these conditions hold, we say that $C$ is a \emph{coassociative $4$-plane}.
\end{prop}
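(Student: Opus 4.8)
\emph{Proof proposal.} The plan is to identify $(V,\phi_0)\cong(\mathrm{Im}\,\mathbb{O},\phi_0)$ and work throughout with the octonionic cross product, using the identity $\phi_0(x,y,z)=\langle x, y\times z\rangle$ from the Remark above together with its standard consequences: $x\times y\perp\mathrm{span}(x,y)$, $\|x\times y\|^2=\|x\|^2\|y\|^2-\langle x,y\rangle^2$, and $u\times(u\times v)=\langle u,v\rangle\,u-\|u\|^2 v$. I would then prove $(1)\Leftrightarrow(2)$ by Hodge duality, and $(2)\Leftrightarrow(3)$ by showing that each is equivalent to the statement that the relevant plane is closed under $\times$ --- equivalently, is the imaginary part of a quaternion subalgebra of $\mathbb{O}$.

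For $(1)\Leftrightarrow(2)$: given an oriented orthonormal basis $(c_1,c_2,c_3,c_4)$ of $C$, extend it by an oriented orthonormal basis $(n_1,n_2,n_3)$ of $C^\perp$ so that $(n_1,n_2,n_3,c_1,c_2,c_3,c_4)$ is positively oriented. Choosing $g\in\mathrm{SO}(V)$ carrying the standard frame to this one and using that the Hodge star is $\mathrm{SO}(V)$-equivariant, one computes
\[
\ast\phi_0(c_1,c_2,c_3,c_4)=\phi_0(n_1,n_2,n_3).
\]
Both sides are independent of the chosen orthonormal bases up to sign, and $|\phi_0(n_1,n_2,n_3)|\le 1$ always by Proposition~\ref{prop:G2Comass}; so Proposition~\ref{prop:AssocDefn}(1) applied to $C^\perp=\mathrm{span}(n_1,n_2,n_3)$ yields the equivalence of $(1)$ and $(2)$ at once.

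For $(2)\Leftrightarrow(3)$ I would first record two reformulations. Since $\phi_0(x,y,z)=\langle x,y\times z\rangle$, condition $(3)$ holds if and only if $y\times z\in C^\perp$ for all $y,z\in C$. And, using Proposition~\ref{prop:AssocDefn}(3) together with the identity $u\times(u\times v)=\langle u,v\rangle u-\|u\|^2 v$, a $3$-plane is associative if and only if it is closed under $\times$, equivalently if and only if it equals $\mathrm{Im}(\mathbb{H})$ for a quaternion subalgebra $\mathbb{H}\subseteq\mathbb{O}$. Writing $\mathbb{O}=\mathbb{H}\oplus\mathbb{H}\ell$ via Cayley--Dickson, one direction is then immediate: if $C^\perp=\mathrm{Im}(\mathbb{H})$ is associative, then $C=\mathbb{H}\ell$ (both being the $4$-dimensional orthogonal complement of $\mathbb{H}$ inside $\mathrm{Im}\,\mathbb{O}$), and for $p\ell,q\ell\in C$ the Cayley--Dickson formula gives $(p\ell)(q\ell)=-\bar q\,p\in\mathbb{H}$, hence $(p\ell)\times(q\ell)\in\mathrm{Im}(\mathbb{H})=C^\perp$; this is $(2)\Rightarrow(3)$.

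The converse $(3)\Rightarrow(2)$ is the step I expect to be the main obstacle, since it amounts to the rigidity statement that $\phi_0|_C=0$ forces $C$ to arise this way. Here I would argue: fix orthonormal $c_1,c_2\in C$ and set $n_3:=c_1\times c_2\in C^\perp$; then $\mathrm{span}(c_1,c_2,n_3)=\mathrm{Im}(\mathbb{H})$ for the quaternion subalgebra $\mathbb{H}$ generated by $c_1,c_2$, and a dimension count using $c_1,c_2\in C$ and $n_3\in C^\perp$ gives $C=\mathrm{span}(c_1,c_2)\oplus(P\ell)$ for a $2$-plane $P\subseteq\mathbb{H}$. Feeding the remaining instances of $(3)$ --- namely $c_i\times(p\ell)\in C^\perp$ and $(p\ell)\times(q\ell)\in C^\perp$ for $p\ell,q\ell\in P\ell$ --- through the Cayley--Dickson multiplication pins down $P$ to be a left translate $p_0\cdot\mathrm{span}_{\mathbb{R}}(1,n_3)$ of a complex line, after which a direct computation shows $C^\perp=\mathrm{span}(n_3)\oplus(P^\perp\ell)$ is closed under $\times$, hence associative. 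Combining the three steps proves the proposition. (Of course, this equivalence is classical, due to Harvey--Lawson~\cite{harvey1982calibrated}, whose proof one could instead simply cite.)
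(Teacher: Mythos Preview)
The paper does not actually prove this proposition: immediately after stating Propositions~\ref{prop:AssocDefn} and~\ref{prop:CoassocDefn} it simply records that ``Proofs of the above propositions can be found in \cite{harvey1982calibrated} and \cite{salamon2010notes}.'' So there is no paper proof to compare against; your final parenthetical already anticipates this.

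That said, your proposal is a correct self-contained argument. The step $(1)\Leftrightarrow(2)$ via the Hodge-star identity $\ast\phi_0(c_1,c_2,c_3,c_4)=\phi_0(n_1,n_2,n_3)$ is clean and exactly right. For $(2)\Rightarrow(3)$ the Cayley--Dickson computation works as stated. For $(3)\Rightarrow(2)$, your dimension count is valid: since $c_1,c_2\in C\cap\mathrm{Im}(\mathbb{H})$ and $n_3\in C^\perp\cap\mathrm{Im}(\mathbb{H})$, the orthogonal complement of $\mathrm{span}(c_1,c_2)$ inside $C$ is forced into $\mathbb{H}\ell$, giving $C=\mathrm{span}(c_1,c_2)\oplus P\ell$. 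The condition $(p\ell)\times(q\ell)=\mathrm{Im}(\bar p q)\in C^\perp\cap\mathrm{Im}(\mathbb{H})=\mathrm{span}(n_3)$ for an orthonormal pair $p,q\in P$ already forces $q=\pm p\,n_3$, hence $P=p_0\cdot\mathrm{span}_{\mathbb R}(1,n_3)$ as you claim; the remaining conditions $c_i\times(p\ell)\in C^\perp$ are then automatic, and the closure of $C^\perp=\mathrm{span}(n_3)\oplus(p_0\cdot\mathrm{span}(c_1,c_2))\ell$ under $\times$ is a short direct check. One cosmetic remark: invoking an element $g\in\mathrm{SO}(V)$ is unnecessary for the Hodge-star identity, since $\ast$ is defined purely by the metric and orientation and hence the formula holds in any oriented orthonormal frame.
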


\noindent Proofs of the above propositions can be found in \cite{harvey1982calibrated} and \cite{salamon2010notes}. \\

\indent The $\text{G}_2$-action on $V$ induces $\text{G}_2$-actions on the Grassmannians $\text{Gr}_k(V)$ of $k$-planes in $V$.  While these actions are transitive for $k = 1,2,5,6$, they are not transitive for $k = 3,4$: indeed, the (proper) subsets consisting of associative $3$-planes and coassociative $4$-planes are $\text{G}_2$-orbits.  The corresponding stabilizer, recorded in the following proposition, will play a crucial role in this work:
\begin{prop}[\cite{harvey1982calibrated}] \label{prop:G2ActStab} 
	The Lie group $\text{G}_2$ acts transitively on the subset of associative $3$-planes and on the subset of coassociative $4$-planes:
	\begin{align*}
	\{E \in \text{Gr}_3(V) \colon \left|\phi_0(E)\right| = 1\} & \subset \text{Gr}_3(V), \\
	\{E \in \text{Gr}_4(V) \colon \left|\ast\phi_0(E)\right| = 1\} & \subset \text{Gr}_4(V).
	\end{align*}
	In both cases, the stabilizer of the $\textup{G}_2$-action is isomorphic to $\textup{SO}(4)$.
\end{prop}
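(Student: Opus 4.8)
The plan is to prove the statement for associative $3$-planes and then transfer it to coassociative $4$-planes. By Proposition~\ref{prop:CoassocDefn}, a $4$-plane $C$ is coassociative if and only if $C^\perp$ is associative; since $\text{G}_2 \le \text{SO}(7)$, the assignment $C \mapsto C^\perp$ is a $\text{G}_2$-equivariant bijection from coassociative $4$-planes to associative $3$-planes, so it suffices to treat the associative case. For transitivity I would use two inputs. First, the classical fact that $\text{G}_2$ acts transitively on the unit sphere $S^6 \subset V$, with $\text{Stab}_{\text{G}_2}(e_1) = \text{SU}(3)$ acting on $e_1^\perp$ as the standard complex $3$-dimensional representation, the complex structure being $v \mapsto e_1 \times v$. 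Second: if $A$ is associative and $\{u,v\}\subset A$ is orthonormal, then $u \times v \in A$ --- extending to an orthonormal basis $\{u,v,w\}$ of $A$, Proposition~\ref{prop:AssocDefn}(1) gives $|\langle u\times v, w\rangle| = |\phi_0(u,v,w)| = 1$, while $\|u\times v\| = 1$, so equality in Cauchy--Schwarz forces $u\times v = \pm w \in A$. Hence every associative plane through $e_1$ is $\text{span}\{e_1, v, e_1\times v\}$ for a unit $v \in e_1^\perp$, and it depends only on the complex line $\mathbb{C}v$; since $\text{SU}(3)$ is transitive on $\mathbb{CP}^2$ and preserves $\times$, it is transitive on associative planes through $e_1$, and combined with the first input, $\text{G}_2$ is transitive on all associative $3$-planes.

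For the stabilizer, write $A_0 = \text{span}\{e_1,e_2,e_3\}$, $C_0 = A_0^\perp$, and $H = \{g\in\text{G}_2 : g(A_0)=A_0\}$ (which also preserves $C_0$). I will identify $H$ with $\text{SO}(4)$ by squeezing it between two copies of that group. For the lower bound, recall that $\text{G}_2$ may be identified with $\text{Aut}(\mathbb{O})$ acting on $V \cong \text{Im}(\mathbb{O})$, and take the Cayley--Dickson splitting $\mathbb{O} = \mathbb{H}\oplus\mathbb{H}$ with $\mathbb{H} = \{(a,0)\}$ and $A_0 = \text{Im}(\mathbb{H})$; a direct check shows $(p,q)\cdot(a,b) := (pa\bar p,\, qb\bar p)$ is an algebra automorphism of $\mathbb{O}$ for every $(p,q)\in \text{Sp}(1)\times\text{Sp}(1)$, and it fixes $1$ and preserves $\mathbb{H}$, hence lies in $H$; the resulting homomorphism $\text{Sp}(1)\times\text{Sp}(1)\to H$ has kernel $\{\pm(1,1)\}$ and thus descends to an injection $\text{SO}(4)\cong(\text{Sp}(1)\times\text{Sp}(1))/\{\pm(1,1)\}\hookrightarrow H$, so $\dim H \ge 6$. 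For the upper bound, consider $\rho\colon H\to\text{O}(C_0)$, $g\mapsto g|_{C_0}$. Reading off from the explicit formula for $\phi_0$ that $e_4\times e_5 = e_1$, $e_4\times e_6 = e_2$, $e_4\times e_7 = -e_3$, any $g\in H$ fixing $C_0$ pointwise fixes $A_0$ pointwise (as $\text{G}_2$ preserves $\times$), so $g = \text{Id}$ and $\rho$ is injective; moreover $g|_{A_0}$ preserves $\times|_{A_0}$, which by inspection of $\phi_0$ is the standard cross product on $A_0\cong\mathbb{R}^3$, so $g|_{A_0}\in\text{SO}(3)$ and $\det(g|_{A_0})=1$, whence $\det(g|_{C_0})=1$ since $g\in\text{SO}(7)$ and $V = A_0\oplus C_0$; thus $\rho(H)\subseteq\text{SO}(C_0)\cong\text{SO}(4)$. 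Therefore $\rho$ is an injective homomorphism from the compact group $H$ into $\text{SO}(4)$ with $\dim H = 6 = \dim\text{SO}(4)$, so $\rho(H)$ is a closed $6$-dimensional subgroup of the connected group $\text{SO}(4)$, hence all of it, and $H\cong\text{SO}(4)$.

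The transitivity argument and the faithfulness of $\rho$ are essentially mechanical once one uses the explicit form of $\phi_0$; I expect the real content --- and the main obstacle --- to be pinning down the \emph{isomorphism type} of the $6$-dimensional stabilizer as precisely $\text{SO}(4)$, rather than another $6$-dimensional group such as $\text{SU}(2)\times\text{SO}(3)$. The sandwich above resolves this cleanly, the one genuinely computational step being the verification that $(p,q)\cdot(a,b) = (pa\bar p,\, qb\bar p)$ respects octonionic multiplication, which is a short calculation using $\bar p p = 1$ and the Cayley--Dickson product formula.
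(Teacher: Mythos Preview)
Your argument is correct. Note, however, that the paper does not supply its own proof of this proposition: it is stated with a citation to Harvey--Lawson and used as a black box, so there is nothing to compare against in the paper itself. What you have written is essentially the standard proof one finds in the literature (e.g.\ Harvey--Lawson or the Salamon--Walpuski notes): reduce transitivity to the $\text{SU}(3)$-action on $\mathbb{CP}^2$ via the fibration $S^6 = \text{G}_2/\text{SU}(3)$, and identify the stabilizer by exhibiting the Cayley--Dickson $\text{Sp}(1)\times\text{Sp}(1)$-action and bounding from above via the faithful $\text{SO}(C_0)$-representation. The verification that $(p,q)\cdot(a,b) = (pa\bar p,\, qb\bar p)$ is an automorphism, and the cross-product identities $e_4\times e_5 = e_1$ etc., are all correct under the paper's conventions for $\phi_0$.
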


\indent We may finally define our primary objects of interest:
\begin{defn}
	Let $(M^7, \varphi)$ be a $7$-manifold equipped with a $\text{G}_2$-structure $\varphi$.  Identify each tangent space $(T_xM, \varphi|_x) \simeq (V, \phi_0)$. \\
	\indent An \textit{associative $3$-fold} in $M$ is a $3$-dimensional immersed submanifold $\Sigma \subset M$ for which each tangent space $T_x\Sigma \subset T_xM$ is an associative $3$-plane. \\
	\indent Similarly, a \textit{coassociative $4$-fold} in $M$ is a $4$-dimensional immersed submanifold $\Sigma \subset M$ for which each tangent space $T_x\Sigma \subset T_xM$ is a coassociative $4$-plane.
\end{defn}

\indent Note that if $d\varphi = 0$, then $\varphi$ is a calibration whose calibrated $3$-planes are the associative $3$-planes in $T_xM$.  Thus, in this case, an associative $3$-fold is a calibrated submanifold, and hence a minimal submanifold of $M$. \\
\indent Similarly, if $d\!\ast\! \varphi = 0$, then $\ast\varphi$ is a calibration whose calibrated $4$-planes are the coassociative $4$-planes in $T_xM$.  Thus, in this case, a coassociative $4$-fold is a calibrated submanifold, and hence a minimal submanifold of $M$.  The ``Minimality Problem" described in the introduction asks about the converses of these claims.


\subsection{Some $\text{SO}(4)$-Representation Theory}\label{ssect:SO4reps}

\indent \indent In this subsection, we describe the aspects of $\text{SO}(4)$-representation theory that are relevant to the study of associative $3$-folds and coassociative $4$-folds.  Particularly important for our purposes are $\text{SO}(4)$-irreducible decompositions of $\Lambda^1(\mathbb{R}^7)$, $\Lambda^2(\mathbb{R}^7)$, $\text{Sym}^2(\mathbb{R}^7)$, and $\Lambda^3(\mathbb{R}^7)$, each of which we will describe in turn. \\

\indent To begin, recall that the compact Lie group $\text{SO}(4)$ is double-covered by the simply-connected group $\text{SU}(2) \times \text{SU}(2).$  The complex irreducible representations of $\text{SU}(2) \times \text{SU}(2)$ are exactly the tensor products $\V_p \otimes \V_q$ of irreducible $\text{SU}(2)$-representations for each factor. The complex irreducible representations of $\text{SU}(2)$ are well known to be the spaces of homogeneous polynomials in two variables of fixed degree, $\V_p = \text{Sym}^p \left(\mathbb{C}\langle x,y\rangle \right)$. \\
\indent Let $\V^{\mathbb{C}}_{p,q}$ denote $\V_p \otimes \V_q$.  We think of $\V^{\mathbb{C}}_{p,q}$ as the space of homogeneous polynomials in $(x,y;w,z)$ of bidegree $(p,q)$.  When $p$ and $q$ have the same parity the representation $\V^{\mathbb{C}}_{p,q}$ descends to a representation of $\text{SO}(4)$, and each of these representations has a real structure induced by the map $(x,y,w,z) \mapsto (y,-x,z,-w).$ This yields a complete description of the real representations of $\text{SO}(4)$. \\
\indent We work with real representations, letting $\V_{p,q}$ denote the real representation underlying  $\V^{\mathbb{C}}_{p,q}$. In this language, the standard 4-dimensional representation of $\text{SO}(4)$ is $\V_{1,1}$, while the adjoint representation $\mathfrak{so}(4)$ is $\V_{2,0} \oplus \V_{0,2}$. The ordering of the subscripts is chosen so that the representation $\Lambda^2_+(\mathbb{R}^4) $ of $\text{SO}(4)$ on the self-dual 2-forms is $\mathsf{V}_{0,2}$. \\
\indent The Clebsch-Gordan formula applied to each $\text{SU}(2)$ representation gives the irreducible decomposition of a tensor product of $\text{SO}(4)$-modules:
\begin{align*}\label{eq:ClebschGSO4}
\V_{p_1,q_1} \otimes \V_{p_2,q_2} \cong \bigoplus_{i = 0}^{|p_1-p_2|} \bigoplus_{j=0}^{|q_1 - q_2|} \V_{p_1+p_2-2i, \,q_1+q_2-2j}.
\end{align*}

\subsubsection{$\text{SO}(4)$ as a subgroup of $\text{G}_2$}\label{sssect:SO4subG2}

\indent \indent In our calculations we shall need a concrete realization of $\text{SO}(4)$ as the stabilizer of an associative or coassociative plane. Let $\text{SO}(4)$ act on $V \cong \R^7$ via the identification $V \cong \V_{0,2} \oplus \V_{1,1},$ and let $(e_1, \ldots, e_7)$ be an orthonormal basis of $V$ such that:
\begin{itemize}[nosep]
\item $ \left\langle e_1, e_2, e_3 \right\rangle \cong \V_{0,2}$ and $\left\langle e_4, e_5, e_6, e_7 \right\rangle \cong \V_{1,1},$
\item The map 
\begin{align*}
e_1 \mapsto e_{45} + e_{67}, \ \ \ e_2 \mapsto e_{46} - e_{57}, \ \ \ e_3 \mapsto -e_{47}-e_{56} 
\end{align*}
is $\text{SO}(4)$-equivariant. \\
\end{itemize}
Then the 3-form
\begin{align*}
e^{123} + e^1 \wedge (e^{45} + e^{67}) + e^2 \wedge (e^{46} - e^{57}) + e^3 \wedge (-e^{47} - e^{56})
\end{align*}
is $\text{SO}(4)$-invariant, and thus the action of $\text{SO}(4)$ on $V$ gives an embedding $\text{SO}(4) \subset \text{G}_2$. The 3-plane $\left\langle e_1, e_2, e_3 \right\rangle$ is associative and preserved by the action of $\text{SO}(4)$, while the 4-plane $\left\langle e_4, e_5, e_6, e_7 \right\rangle$ is coassociative and preserved by the action of $\text{SO}(4).$

\subsubsection{Decomposition of $1$-Forms on $V^*$}\label{sssect:G21FormDecomp}

\indent \indent Let $V$ be as above.  The decomposition of $\Lambda^1(V^*)$ into irreducible $\text{SO}(4)$-modules is simply
$$\Lambda^1(V^*) = \mathsf{A} \oplus \mathsf{C},$$
where
\begin{align*}
\mathsf{A} & \cong \left\langle e^1, e^2, e^3 \right\rangle\!, &\mathsf{C} & \cong \left\langle e^4, e^5, e^6, e^7 \right\rangle\!.
\end{align*}
As abstract $\text{SO}(4)$-modules, we have isomorphisms $\mathsf{A} \cong \V_{0,2}$ and $\mathsf{C} \cong \V_{1,1}.$ \\

\noindent \textbf{Notation:} We let $\flat \colon V \to V^*$ via $X^\flat := \langle X, \cdot \rangle$ denote the usual musical (index-lowering) isomorphism induced by the inner product $\langle \cdot, \cdot \rangle$ on $V$, and let
\begin{equation}
\sharp \colon V^* \to V \label{eq:G2-Sharp}
\end{equation}
denote its inverse.  In the sequel, we let $\mathsf{A}^\sharp, \mathsf{C}^\sharp \subset V$ denote the images of $\mathsf{A}, \mathsf{C} \subset V^*$ under the $\sharp$ isomorphism.

\subsubsection{Decomposition of $2$-Forms on $V^*$}\label{sssect:G22FormDecomp}

\indent \indent We now seek to decompose $\Lambda^2(V^*)$ into $\text{SO}(4)$-irreducible submodules.  As noted in \S\ref{ssect:G2Prelims} above, $\Lambda^2(V^*)$ splits into $\text{G}_2$-irreducible submodules as
\begin{equation} \label{eq:G2-2forms-1}
\Lambda^2(V^*) \cong \Lambda^2_7 \oplus \Lambda^2_{14} 
\end{equation}
On the other hand, using $V^* = \mathsf{A} \oplus \mathsf{C}$, we may also decompose $\Lambda^2(V^*)$ as
\begin{equation} \label{eq:G2-2forms-2}
\Lambda^2(V^*) \cong \Lambda^2(\mathsf{A}) \oplus (\mathsf{A} \otimes \mathsf{C}) \oplus \Lambda^2_+(\mathsf{C}) \oplus \Lambda^2_-(\mathsf{C}).
\end{equation}
We will refine both decompositions (\ref{eq:G2-2forms-1}) and (\ref{eq:G2-2forms-2}) into $\text{SO}(4)$-submodules. \\
\indent To begin, note first that as $\text{SO}(4)$-modules, we have that $\Lambda^2(\mathsf{A}) \cong \Lambda^2_+(\mathsf{C}) \cong \mathsf{V}_{0,2}$ and $\Lambda^2_-(\mathsf{C}) \cong \mathsf{V}_{2,0}$ are irreducible.  Thus, it remains only to decompose $\Lambda^2_7$, $\Lambda^2_{14}$, and $\mathsf{A} \otimes \mathsf{C}$.
\begin{defn}
	We define
	\begin{align*}
	(\Lambda^2_7)_{\mathsf{A}} & := \Lambda^2_7 \cap (\Lambda^2(\mathsf{A}) \oplus \Lambda^2(\mathsf{C})) = \{\iota_X\phi_0 \colon X \in \mathsf{A}^\sharp\} \\
	(\Lambda^2_{7})_{\mathsf{C}} & :=  \Lambda^2_7 \cap (\mathsf{A} \otimes \mathsf{C}) = \{\iota_X\phi_0 \colon X \in \mathsf{C}^\sharp\} \\
	\\
	(\Lambda^2_{14})_{\mathsf{A}} & := \Lambda^2_{14} \cap (\Lambda^2(\mathsf{A}) \oplus \Lambda^2_+(\mathsf{C})) \\
	(\Lambda^2_{14})_{1,3} & := \Lambda^2_{14} \cap (\mathsf{A} \otimes \mathsf{C}) \\
	(\Lambda^2_{14})_{2,0} & := \Lambda^2_{14} \cap \Lambda^2_-(\mathsf{C}). 
	\end{align*}
\end{defn}
\noindent The reader can check that, in fact, $\Lambda^2_-(\mathsf{C}) \subset \Lambda^2_{14}$, so that $(\Lambda^2_{14})_{2,0} = \Lambda^2_{14} \cap \Lambda^2_-(\mathsf{C}) = \Lambda^2_-(\mathsf{C})$. \\

\indent Consider the $\text{SO}(4)$-module isomorphism
\begin{align*}
L \colon \mathsf{A} & \to (\Lambda^2_7)_{\mathsf{A}} \\
L(\alpha) & = \iota_{\alpha^\sharp}\phi_0 = \ast(\alpha \wedge \ast\phi_0).
\end{align*}
For $\beta \in \Lambda^2(V^*)$, write $\beta = \beta|_{\Lambda^2(\mathsf{A})} + \beta|_{\mathsf{A} \otimes \mathsf{C}} + \beta|_{\Lambda^2(\mathsf{C})}$, where $\beta|_E \in E$ for $E \in \{\Lambda^2(\mathsf{A}), \mathsf{A} \otimes \mathsf{C}, \Lambda^2(\mathsf{C})\}$.  Define $\text{SO}(4)$-equivariant maps
\begin{align*}
L_{\mathsf{A}} \colon \mathsf{A} & \to \Lambda^2(\mathsf{A}) & L_{\mathsf{C}} \colon  \mathsf{A} & \to \Lambda^2_+(\mathsf{C}) \\
L_{\mathsf{A}}(\alpha) & = L(\alpha)|_{\Lambda^2(\mathsf{A})} & L_{\mathsf{C}}(\alpha) & = L(\alpha)|_{\Lambda^2(\mathsf{C})}
\end{align*}
It is straightforward to check that $L_{\mathsf{A}}$ and $L_{\mathsf{C}}$ are well-defined $\text{SO}(4)$-module isomorphisms, and that $L_{\mathsf{A}} = \ast_{\mathsf{A}}$ coincides with the usual Hodge star operator on $\mathsf{A}$.  Finally, we define the map
\begin{align*}
W \colon \mathsf{A} & \to (\Lambda^2_{14})_{\mathsf{A}} \\
W(\alpha) & = 2L_{\mathsf{A}}(\alpha) - L_{\mathsf{C}}(\alpha)
\end{align*}
Again, the reader can check that $W$ is a well-defined $\text{SO}(4)$-module isomorphism.  We caution that the maps $L$, $L_{\mathsf{C}}$, and $W$ are not isometries. 
\begin{lem}\label{lem:G22FormDecomp} 
	The decompositions
	\begin{align}
	\Lambda^2_7 & = (\Lambda^2_7)_{\mathsf{A}} \oplus (\Lambda^2_7)_{\mathsf{C}} \label{eq:Lam27decomp} \\
	\Lambda^2_{14} & = (\Lambda^2_{14})_{\mathsf{A}} \oplus (\Lambda^2_{14})_{1,3} \oplus (\Lambda^2_{14})_{2,0} \label{eq:Lam214decomp} \\
	\mathsf{A} \otimes \mathsf{C} & = (\Lambda^2_7)_{\mathsf{C}} \oplus (\Lambda^2_{14})_{1,3} \label{eq:ACdecomp}
	\end{align}
	consist of $\text{SO}(4)$-irreducible submodules.
	
	Thus, the decomposition
	\begin{align*}
	\Lambda^2(V) & = \left[(\Lambda^2_7)_{\mathsf{A}} \oplus (\Lambda^2_7)_{\mathsf{C}}\right] \oplus \left[(\Lambda^2_{14})_{\mathsf{A}} \oplus (\Lambda^2_{14})_{1,3} \oplus (\Lambda^2_{14})_{2,0}\right]
	\end{align*}
	is $\text{SO}(4)$-irreducible and refines (\ref{eq:G2-2forms-1}), while
	\begin{align*}
	\Lambda^2(V) & = \Lambda^2(\mathsf{A}) \oplus \left[(\Lambda^2_7)_{\mathsf{C}} \oplus (\Lambda^2_{14})_{1,3}\right] \oplus \Lambda^2_+(\mathsf{C}) \oplus \Lambda^2_-(\mathsf{C})
	\end{align*}
	is $\text{SO}(4)$-irreducible and refines (\ref{eq:G2-2forms-2}).
\end{lem}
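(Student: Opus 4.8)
The plan is to prove Lemma \ref{lem:G22FormDecomp} by combining representation-theoretic bookkeeping (dimension and isotypic-component counts from the Clebsch--Gordan formula) with the explicit $\text{SO}(4)$-module isomorphisms $L$, $L_{\mathsf{A}}$, $L_{\mathsf{C}}$, $W$ already constructed above. First I would assemble the $\text{SO}(4)$-decomposition of each of the four summands appearing in (\ref{eq:G2-2forms-2}). We are told $\Lambda^2(\mathsf{A}) \cong \Lambda^2_+(\mathsf{C}) \cong \V_{0,2}$ and $\Lambda^2_-(\mathsf{C}) \cong \V_{2,0}$ are already irreducible, so only $\mathsf{A} \otimes \mathsf{C}$ needs work: since $\mathsf{A} \cong \V_{0,2}$ and $\mathsf{C} \cong \V_{1,1}$, the Clebsch--Gordan formula gives $\mathsf{A} \otimes \mathsf{C} \cong \V_{1,1} \oplus \V_{1,3}$, a sum of two distinct irreducibles of dimensions $4$ and $8$. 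This already forces (\ref{eq:ACdecomp}) once I identify the two summands: $(\Lambda^2_7)_{\mathsf{C}} = \{\iota_X \phi_0 : X \in \mathsf{C}^\sharp\}$ is $4$-dimensional and $\text{SO}(4)$-invariant (the map $X \mapsto \iota_X\phi_0$ is $\text{G}_2$-equivariant, hence $\text{SO}(4)$-equivariant, and injective), so it must be the $\V_{1,1}$ summand; then $(\Lambda^2_{14})_{1,3} := \Lambda^2_{14} \cap (\mathsf{A} \otimes \mathsf{C})$ is its complement provided one checks $(\Lambda^2_7)_{\mathsf{C}} \subset \Lambda^2_7$ (immediate from the defining eigenvalue condition $\ast(\phi_0 \wedge \iota_X\phi_0) = 2\,\iota_X\phi_0$, a $\text{G}_2$-identity) and that $\mathsf{A}\otimes\mathsf{C}$ meets $\Lambda^2_{14}$ in the remaining $\V_{1,3}$.

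Next I would handle (\ref{eq:Lam27decomp}). Since $\Lambda^2_7 \cong V^* = \mathsf{A} \oplus \mathsf{C} \cong \V_{0,2} \oplus \V_{1,1}$ as $\text{SO}(4)$-modules (via $X \mapsto \iota_X \phi_0$), the two summands $(\Lambda^2_7)_{\mathsf{A}}$ and $(\Lambda^2_7)_{\mathsf{C}}$ are exactly the images of $\mathsf{A}^\sharp$ and $\mathsf{C}^\sharp$ under this map, hence irreducible of the stated types; the only thing to verify is that $(\Lambda^2_7)_{\mathsf{A}} = \{\iota_X\phi_0 : X \in \mathsf{A}^\sharp\}$ genuinely lands in $\Lambda^2(\mathsf{A}) \oplus \Lambda^2(\mathsf{C})$ rather than having an $\mathsf{A}\otimes\mathsf{C}$ component, and symmetrically that $\iota_X\phi_0 \in \mathsf{A}\otimes\mathsf{C}$ for $X \in \mathsf{C}^\sharp$ — both of which are a one-line check against the explicit formula for $\phi_0$ in terms of $e^1,\ldots,e^7$: contracting $\phi_0 = e^{123} + e^1\wedge(e^{45}+e^{67}) + \cdots$ with $e_1, e_2, e_3$ produces forms in $\Lambda^2(\mathsf{A})\oplus\Lambda^2(\mathsf{C})$, while contracting with $e_4,\ldots,e_7$ produces forms in $\mathsf{A}\otimes\mathsf{C}$. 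For (\ref{eq:Lam214decomp}), I note $\Lambda^2_{14} \cong \mathfrak{g}_2$ restricted to $\text{SO}(4)$, which decomposes as $\mathfrak{so}(4) \oplus (\mathfrak{g}_2/\mathfrak{so}(4)) \cong \V_{2,0} \oplus \V_{0,2} \oplus \V_{1,3}$ (the isotropy representation of $\text{G}_2/\text{SO}(4)$, an $8$-dimensional space, is the irreducible $\V_{1,3}$). The summand $(\Lambda^2_{14})_{2,0} = \Lambda^2_-(\mathsf{C}) \cong \V_{2,0}$ is the remark just after the definition; the isomorphism $W : \mathsf{A} \to (\Lambda^2_{14})_{\mathsf{A}}$ identifies $(\Lambda^2_{14})_{\mathsf{A}} \cong \V_{0,2}$; and $(\Lambda^2_{14})_{1,3}$ is the $\V_{1,3}$ piece just identified. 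The dimensions $1 + \dim(\Lambda^2_{14})_{\mathsf{A}} + 8 + 3 = 14$ close the bookkeeping — more precisely $3 + 8 + 3 = 14$.

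Finally, the two displayed global decompositions of $\Lambda^2(V)$ follow by substituting the three established line-decompositions into (\ref{eq:G2-2forms-1}) and (\ref{eq:G2-2forms-2}) respectively, together with the observation that all five (resp. five) summands are pairwise non-isomorphic or, where they repeat the type $\V_{0,2}$ (namely $\Lambda^2(\mathsf{A})$, $\Lambda^2_+(\mathsf{C})$, $(\Lambda^2_7)_{\mathsf{A}}$, $(\Lambda^2_{14})_{\mathsf{A}}$ all being $\V_{0,2}$), lie in transverse $\text{G}_2$-isotypic or $\Lambda^2(\mathsf{A})\oplus\mathsf{A}\otimes\mathsf{C}\oplus\Lambda^2(\mathsf{C})$-graded pieces, so the direct sums are internal. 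I expect the main obstacle to be not any single step but the care needed in the intersections: one must be sure that e.g. $\Lambda^2_7 \cap (\mathsf{A}\otimes\mathsf{C})$ really has dimension $4$ and not $0$ or $8$, i.e. that the copy of $\V_{1,1}$ inside $\Lambda^2_7$ actually sits inside the $\mathsf{A}\otimes\mathsf{C}$ block and not spread across $\Lambda^2(\mathsf{A})\oplus\Lambda^2(\mathsf{C})$; this is exactly what the explicit contraction computation with the standard $\phi_0$ settles, and it is the one place where abstract representation theory alone (which only sees isomorphism classes) is insufficient and the concrete model of $\text{SO}(4)\subset\text{G}_2$ from \S\ref{sssect:SO4subG2} must be used.
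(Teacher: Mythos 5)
Your proposal is correct and follows essentially the same route as the paper: Clebsch--Gordan for $\mathsf{A}\otimes\mathsf{C}\cong\V_{1,1}\oplus\V_{1,3}$, the $\text{G}_2$-equivariant identification $V\cong\Lambda^2_7$ via $X\mapsto\iota_X\phi_0$ together with $V\cong\mathsf{A}\oplus\mathsf{C}$, the map $W$ for $(\Lambda^2_{14})_{\mathsf{A}}$, and the equality $(\Lambda^2_{14})_{2,0}=\Lambda^2_-(\mathsf{C})$. The only cosmetic difference is that you extract the $\text{SO}(4)$-type of $\Lambda^2_{14}$ from the decomposition $\mathfrak{g}_2\cong\mathfrak{so}(4)\oplus\mathfrak{m}$ (with $\mathfrak{m}\cong\V_{1,3}$ the isotropy representation of $\text{G}_2/\text{SO}(4)$), whereas the paper obtains the same answer $\V_{0,2}\oplus\V_{2,0}\oplus\V_{1,3}$ by subtracting $\Lambda^2_7\cong V$ from the known $\text{SO}(4)$-decomposition of $\Lambda^2(V^*)$; both are one-line dimension/isotypic counts, and you also add a worthwhile explicit check that $\iota_X\phi_0$ lands in the expected block for $X$ in $\mathsf{A}^\sharp$ versus $\mathsf{C}^\sharp$.
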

\begin{proof}
	The decomposition (\ref{eq:Lam27decomp}) follows from the isomorphism $V \to \Lambda^2_7, \: X \mapsto \iota_X \!\left( \varphi_0 \right)$ and the irreducible decomposition $V \cong \mathsf{A} \oplus \mathsf{C}$. \\
	\indent By a dimension count, the $\text{SO}(4)$-invariant subspace $\Lambda^2_{14}$ of $\Lambda^2 \left( V^* \right)$ must be isomorphic to the $\text{SO}(4)$-module $\mathsf{V}_{0,2} \oplus \mathsf{V}_{2,0} \oplus \mathsf{V}_{1,3},$ while, by the Clebsch-Gordan formula, the space $\mathsf{A} \otimes \mathsf{C}$ is isomorphic to $\mathsf{V}_{1,3} \oplus \mathsf{V}_{1,1}.$ It follows from Schur's lemma that the space $(\Lambda^2_{14})_{1,3}$ is isomorphic to the $\text{SO}(4)$-module $\mathsf{V}_{1,3}.$ The space $(\Lambda^2_{14})_{\mathsf{A}}$ is the image of $\mathsf{A}$ under the isomorphism $W$ defined above, so it is an irreducible $\text{SO}(4)$-module, while the space $(\Lambda^2_{14})_{2,0}$ is isomorphic to $\Lambda^2_- ( \mathsf{C}) \cong \mathsf{V}_{2,0}$ so it is an irreducible $\text{SO}(4)$-module. Thus, the decomposition (\ref{eq:Lam214decomp}) consists of irreducible $\text{SO}(4)$-modules. \\
	\indent To see that (\ref{eq:ACdecomp}) is an irreducible decomposition, note that we have already shown that both $ (\Lambda^2_7)_{\mathsf{C}}$ and $(\Lambda^2_{14})_{1,3}$ are irreducible $\text{SO}(4)$-modules.
\end{proof}

\begin{defn}
	We define the map
	\begin{align}
	\natural \colon (\Lambda^2_{14})_{\mathsf{A}} & \to \mathsf{A}^\sharp \label{eq:G2-NatIsom} \\
	\beta & \mapsto \beta^\natural = \sqrt{6}\,(W^{-1}(\beta))^\sharp \notag
	\end{align}
	The map $\natural$ is an $\text{SO}(4)$-module isomorphism, and (because of the factor of $\sqrt{6}$) an isometry with respect to the inner product (\ref{eq:G2-InnerProd}) on $\Lambda^2(V^*)$.
\end{defn}

\subsubsection{Decomposition of the Quadratic Forms on $V^*$}\label{ssect:G2QuadFormDecomp}

\indent \indent Before turning to the decomposition of $\Lambda^3(V^*)$, we take a moment to decompose $\text{Sym}^2(V^*)$ into $\text{SO}(4)$-irreducible pieces.  To this end, we first use $V^* = \mathsf{A} \oplus \mathsf{C}$ to split
\begin{align*}
\text{Sym}^2(V^*) & \cong \mathbb{R}\text{Id}_{\mathsf{A}} \oplus \text{Sym}^2_0(\mathsf{A}) \oplus (\mathsf{A} \otimes \mathsf{C}) \oplus \mathbb{R}\text{Id}_{\mathsf{C}} \oplus \text{Sym}^2_0(\mathsf{C}).
\end{align*}
Each of these summands is $\text{SO}(4)$-irreducible, with the exception of $\mathsf{A} \otimes \mathsf{C}$, which splits into irreducible summands as
\begin{equation*}
\mathsf{A} \otimes \mathsf{C} \cong (\mathsf{A} \otimes \mathsf{C})_{1,3} \oplus (\mathsf{A} \otimes \mathsf{C})_{\mathsf{C}}
\end{equation*}
where $(\mathsf{A} \otimes \mathsf{C})_{1,3}$ and $(\mathsf{A} \otimes \mathsf{C})_{\mathsf{C}}$ are submodules isomorphic to $\mathsf{V}_{1,3}$ and $\mathsf{C}$, respectively. \\
\indent Note that we are employing a slight abuse of notation.  That is, in \S\ref{sssect:SO4subG2} we used $\mathsf{A} \otimes \mathsf{C}$ to denote a submodule of $\Lambda^2(V^*)$, whereas here in \S\ref{sssect:G21FormDecomp} we are using the same symbol $\mathsf{A} \otimes \mathsf{C}$ to denote a submodule of $\text{Sym}^2(V^*)$.  Abstractly, these two $\text{SO}(4)$-modules are isomorphic, as are their irreducible summands.  By Schur's Lemma, there is a one-dimensional family of $\text{SO}(4)$-module isomorphisms $(\Lambda^2_7)_{\mathsf{C}} \cong (\mathsf{A} \otimes \mathsf{C})_{\mathsf{C}}$ and $(\Lambda^2_{14})_{1,3} \cong (\mathsf{A} \otimes \mathsf{C})_{1,3}$. \\
\indent For computations, we will make use of the particular $\text{SO}(4)$-module isomorphism
\begin{equation}
\mathsf{s} \colon (\Lambda^2_7)_{\mathsf{C}} \to (\mathsf{A} \otimes \mathsf{C})_{\mathsf{C}} \label{eq:smap} 
\end{equation}
defined as follows.  In terms of a basis $\{e^1, \ldots, e^6\}$ of $V^*$ with $\mathsf{A} = \text{span}(e^1, e^2, e^3)$ and $\mathsf{C} = \text{span}(e^4, e^5, e^6)$, the map $\mathsf{s}$ will formally replace $\wedge$ symbols with $\circ$ symbols in each $e^i \wedge e^j$ term with $i < j$.  So, for example,
$$\mathsf{s}(-e^{15} - e^{26} + e^{37}) = -e^1 \circ e^5 - e^2 \circ e^6 + e^3 \circ e^7.$$
\indent Finally, we remark that $\text{Sym}^2_0(V^*)$ decomposes into irreducible $\text{SO}(4)$-modules as
\begin{equation}
\textstyle \text{Sym}^2_0(V^*) = \text{Sym}^2_0(\mathsf{A}) \oplus (\mathsf{A} \otimes \mathsf{C})_{1,3} \oplus (\mathsf{A} \otimes \mathsf{C})_{\mathsf{C}} \oplus \text{Sym}^2_0(\mathsf{C}) \oplus \mathbb{R}E_0,
\label{eq:G2-Sym2}
\end{equation}
where
\begin{equation}
E_0 = \text{diag}(4,4,4, -3,-3,-3,-3) \in \text{Sym}^2_0(V^*). \label{eq:E0Def}
\end{equation}

\subsubsection{Decomposition of $3$-Forms on $V^*$}

\indent \indent We now turn to $\Lambda^3(V^*)$.  As noted in \S\ref{ssect:G2Prelims}, $\Lambda^3(V^*)$ splits into $\text{G}_2$-irreducible submodules as
\begin{equation}
\Lambda^3(V^*) \cong \Lambda^3_1 \oplus \Lambda^3_7 \oplus \Lambda^3_{27}. \label{eq:G2-3forms-1}
\end{equation}
The summand $\Lambda^3_1 \cong \mathbb{R}$ is $\text{SO}(4)$-irreducible, but the summands $\Lambda^3_7$ and $\Lambda^3_{27}$ are not. \\
\indent On the other hand, using $V^* \cong \mathsf{A} \oplus \mathsf{C}$, we also have the decomposition:
\begin{align}
\Lambda^3(V^*) \cong \Lambda^3(\mathsf{A}) \oplus (\Lambda^2(\mathsf{A}) \otimes \mathsf{C}) \oplus (\mathsf{A} \otimes \Lambda^2_+(\mathsf{C})) \oplus (\mathsf{A} \otimes \Lambda^2_-(\mathsf{C})) \oplus \Lambda^3(\mathsf{C}). \label{eq:G2-3forms-2}
\end{align}
Three of these summands are $\text{SO}(4)$-irreducible, namely $\Lambda^3(\mathsf{A}) \cong \mathsf{V}_{0,0}$ and $\mathsf{A} \otimes \Lambda^2_-(\mathsf{C}) \cong \mathsf{V}_{2,2}$ and $\Lambda^3(\mathsf{C}) \cong \mathsf{V}_{1,1}$.  Meanwhile, the second and third summands $\Lambda^2(\mathsf{A}) \otimes \mathsf{C}$ and $\mathsf{A} \otimes \Lambda^2_+(\mathsf{C})$ are not.

As in \S\ref{sssect:SO4subG2} above, we will refine both (\ref{eq:G2-3forms-1}) and (\ref{eq:G2-3forms-2}) into $\text{SO}(4)$-irreducible submodules, though only the refinement of (\ref{eq:G2-3forms-1}) will be used in this work.  We begin with (\ref{eq:G2-3forms-1}).

\begin{defn}
	Recall the isomorphism $\mathsf{i} \colon \text{Sym}^2_0(V^*) \to \Lambda^3_{27}$ of (\ref{eq:G2-imap}) and recall the $\text{SO}(4)$-irreducible splitting of $\text{Sym}^2_0(V^*)$ given in (\ref{eq:G2-Sym2}).  We define
	\begin{align*}
	(\Lambda^3_7)_{\mathsf{A}} & := \{\ast(\alpha \wedge \phi_0) \colon \alpha \in \mathsf{A}\} & (\Lambda^3_{27})_{0,0} & := \mathsf{i}(E_0) & (\Lambda^3_{27})_{1,3} & := \mathsf{i}( (\mathsf{A} \otimes \mathsf{C})_{1,3} )  \\
	(\Lambda^3_7)_{\mathsf{C}} & := \{\ast(\alpha \wedge \phi_0) \colon \alpha \in \mathsf{C}\} & (\Lambda^3_{27})_{0,4} & := \mathsf{i}(\text{Sym}^2_0(\mathsf{A})) & (\Lambda^3_{27})_{\mathsf{C}} & := \mathsf{i}( (\mathsf{A} \otimes \mathsf{C})_{\mathsf{C}} ) \\
	& & (\Lambda^3_{27})_{2,2} & := \mathsf{i}(\text{Sym}^2_0(\mathsf{C}))
	\end{align*}
\end{defn}

\begin{lem}\label{lem:G2ThrFormDecomp} 
	The decompositions
	\begin{align*}
	\Lambda^3_7 & = (\Lambda^3_7)_{\mathsf{A}} \oplus (\Lambda^3_7)_{\mathsf{C}} \\
	\Lambda^3_{27} & = (\Lambda^3_{27})_{0,0} \oplus (\Lambda^3_{27})_{0,4} \oplus (\Lambda^3_{27})_{2,2} \oplus (\Lambda^3_{27})_{1,3} \oplus (\Lambda^3_{27})_{\mathsf{C}}
	\end{align*}
	consist of $\text{SO}(4)$-irreducible submodules.
\end{lem}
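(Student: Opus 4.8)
The plan is to deduce both decompositions from the fact that the two linear maps already in play — namely $\alpha \mapsto \ast(\alpha \wedge \phi_0)$ from $\Lambda^1(V^*)$ onto $\Lambda^3_7$, and the map $\mathsf{i}$ of (\ref{eq:G2-imap}) from $\text{Sym}^2_0(V^*)$ onto $\Lambda^3_{27}$ — are $\text{G}_2$-module isomorphisms, hence a fortiori $\text{SO}(4)$-module isomorphisms, since $\text{SO}(4) \leq \text{G}_2$. An equivariant linear isomorphism carries irreducible submodules to irreducible submodules and internal direct sums to internal direct sums, so each asserted decomposition is nothing but the image, under one of these isomorphisms, of an $\text{SO}(4)$-irreducible decomposition of $\Lambda^1(V^*)$ or of $\text{Sym}^2_0(V^*)$ that has already been established.

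For $\Lambda^3_7$: the map $\alpha \mapsto \ast(\alpha \wedge \phi_0)$ is injective on $\Lambda^1(V^*)$ (as $\alpha \wedge \phi_0 \neq 0$ for $\alpha \neq 0$) and both spaces are $7$-dimensional, so it is a $\text{G}_2$-module isomorphism onto $\Lambda^3_7$. Restricting the structure group to $\text{SO}(4)$ and feeding in the decomposition $\Lambda^1(V^*) = \mathsf{A} \oplus \mathsf{C}$ from \S\ref{sssect:G21FormDecomp}, in which $\mathsf{A} \cong \V_{0,2}$ and $\mathsf{C} \cong \V_{1,1}$ are irreducible, we conclude that $(\Lambda^3_7)_{\mathsf{A}}$ and $(\Lambda^3_7)_{\mathsf{C}}$ — being the images of $\mathsf{A}$ and $\mathsf{C}$ — are irreducible $\text{SO}(4)$-submodules, and their sum is the image of $\mathsf{A} \oplus \mathsf{C}$, namely all of $\Lambda^3_7$.

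For $\Lambda^3_{27}$: by (\ref{eq:G2-imap}) together with the inverse $\tfrac{1}{8}\mathsf{j}$, the map $\mathsf{i} \colon \text{Sym}^2_0(V^*) \to \Lambda^3_{27}$ is a $\text{G}_2$-module isomorphism, hence $\text{SO}(4)$-equivariant. Applying $\mathsf{i}$ term by term to the $\text{SO}(4)$-irreducible decomposition (\ref{eq:G2-Sym2}) of $\text{Sym}^2_0(V^*)$ produces precisely the five submodules $(\Lambda^3_{27})_{0,0} = \mathsf{i}(\mathbb{R}E_0)$, $(\Lambda^3_{27})_{0,4} = \mathsf{i}(\text{Sym}^2_0(\mathsf{A}))$, $(\Lambda^3_{27})_{2,2} = \mathsf{i}(\text{Sym}^2_0(\mathsf{C}))$, $(\Lambda^3_{27})_{1,3} = \mathsf{i}((\mathsf{A} \otimes \mathsf{C})_{1,3})$, $(\Lambda^3_{27})_{\mathsf{C}} = \mathsf{i}((\mathsf{A} \otimes \mathsf{C})_{\mathsf{C}})$, each irreducible, with internal direct sum $\mathsf{i}(\text{Sym}^2_0(V^*)) = \Lambda^3_{27}$.

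I do not expect a genuine obstacle: given the $\text{SO}(4)$-decomposition of $\text{Sym}^2_0(V^*)$ and the $\text{G}_2$-equivariance of $\mathsf{i}$ — both already established — the lemma is essentially a corollary. The only point that needs care is the Clebsch--Gordan bookkeeping used to attach the correct labels, i.e.\ checking that $\text{Sym}^2_0(\mathsf{A}) \cong \V_{0,4}$, $\text{Sym}^2_0(\mathsf{C}) \cong \V_{2,2}$, and $\mathsf{A} \otimes \mathsf{C} \cong \V_{1,3} \oplus \V_{1,1}$, so that the dimensions $5+8+4+9+1$ sum to $27$ and no two summands are isomorphic; this is routine and in any case consistent with the naming of the pieces in the Definition.
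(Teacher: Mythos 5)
Your proof is correct, and it is precisely the argument the paper leaves implicit: the lemma is stated without proof because the submodules $(\Lambda^3_7)_{\mathsf{A}}$, $(\Lambda^3_7)_{\mathsf{C}}$, and $(\Lambda^3_{27})_{*}$ are by definition the images of the $\text{SO}(4)$-irreducible pieces of $\Lambda^1(V^*)$ and $\text{Sym}^2_0(V^*)$ (the latter from~(\ref{eq:G2-Sym2})) under the $\text{G}_2$-equivariant isomorphisms $\alpha \mapsto \ast(\alpha \wedge \phi_0)$ and $\mathsf{i}$. Your Clebsch--Gordan bookkeeping ($\text{Sym}^2_0(\mathsf{A}) \cong \V_{0,4}$, $\text{Sym}^2_0(\mathsf{C}) \cong \V_{2,2}$, $\mathsf{A}\otimes\mathsf{C} \cong \V_{1,3}\oplus\V_{1,1}$) is also correct.
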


\begin{defn}
	Recall the isomorphisms $\mathsf{i} \colon \text{Sym}^2_0(V^*) \to \Lambda^3_{27}$ of (\ref{eq:G2-imap}) and $\mathsf{s} \colon (\Lambda^2_7)_{\mathsf{C}} \to (\mathsf{A} \otimes \mathsf{C})_{\mathsf{C}}$ of (\ref{eq:smap}). We define $\dagger \colon (\Lambda^3_{27})_{0,0} \to \mathbb{R}$ to be the unique vector space isomorphism for which
	\begin{equation}
	[\mathsf{i}(E_0)]^\dagger = 4 \sqrt{42} \label{eq:G2-DagIsom}
	\end{equation}
	where $E_0$ is as in (\ref{eq:E0Def}).  The map $\dagger$ is an isometry (due to the choice of $4\sqrt{42}$) with respect to the inner products (\ref{eq:G2-InnerProd}). \\
	\indent We will also need the composition of $\text{SO}(4)$-module isomorphisms
	\begin{align*}
	\mathsf{C}^\sharp & \to (\Lambda^2_7)_{\mathsf{C}} \to (\Lambda^3_{27})_{\mathsf{C}} \\
	X & \mapsto \iota_X\phi_0 \mapsto \textstyle \frac{1}{2\sqrt{3}}\,(\mathsf{i} \circ \mathsf{s})(\iota_X\phi_0).
	\end{align*}
	This map is an isometry due to the factor of $\frac{1}{2\sqrt{3}}$.  We denote the inverse of this isometric isomorphism by
	\begin{equation}
	\ddag \colon (\Lambda^3_{27})_{\mathsf{C}} \to \mathsf{C}^\sharp \label{eq:G2-DDagIsom}
	\end{equation}
\end{defn}

\begin{rmk}
	Extend the isomorphism $L_{\mathsf{C}} \colon \mathsf{A} \to \Lambda^2_+(\mathsf{C})$ to an isomorphism $L_{\mathsf{C}} \colon \mathsf{A} \otimes \mathsf{A} \to \mathsf{A} \otimes \Lambda^2_+(\mathsf{C})$ by the identity on the first $\mathsf{A}$-factor, and split $\mathsf{A} \otimes \mathsf{A} = \mathbb{R} \oplus \text{Sym}^2_0(\mathsf{A}) \oplus \Lambda^2(\mathsf{A})$.  Extend the Hodge star operator $\ast_{\mathsf{A}} \colon \mathsf{A} \to \Lambda^2(\mathsf{A})$ to an isomorphism $\ast_{\mathsf{A}} \colon \mathsf{A} \otimes \mathsf{C} \to \Lambda^2(\mathsf{A}) \otimes \mathsf{C}$ by the identity on the $\mathsf{C}$-factor, and recall the decomposition $\mathsf{A} \otimes \mathsf{C} = (\Lambda^2_7)_{\mathsf{C}} \oplus (\Lambda^2_{14})_{1,3}$. \\
	\indent Defining
	\begin{align*}
	(\Lambda^2(\mathsf{A}) \otimes \mathsf{C})_{\mathsf{C}} & := \ast_{\mathsf{A}}[ (\Lambda^2_7)_{\mathsf{C}} ]  & (\mathsf{A} \otimes \Lambda^2_+(\mathsf{C}))_{0,0} & := L_{\mathsf{C}}( \mathbb{R}) \\
	(\Lambda^2(\mathsf{A}) \otimes \mathsf{C})_{1,3} & := \ast_{\mathsf{A}}[ (\Lambda^2_{14})_{1,3} ] &  (\mathsf{A} \otimes \Lambda^2_+(\mathsf{C}))_{0,4} & := L_{\mathsf{C}}(\text{Sym}^2_0(\mathsf{A}) ) \\ 
	& & (\mathsf{A} \otimes \Lambda^2_+(\mathsf{C}))_{\mathsf{A}} & := L_{\mathsf{C}}( \Lambda^2(\mathsf{A}) ) 
	\end{align*}
	we obtain decompositions
	\begin{align*}
	\Lambda^2(\mathsf{A}) \otimes \mathsf{C} & =  (\Lambda^2(\mathsf{A}) \otimes \mathsf{C})_{\mathsf{C}} \oplus (\Lambda^2(\mathsf{A}) \otimes \mathsf{C})_{1,3} \\
	\mathsf{A} \otimes \Lambda^2_+(\mathsf{C}) & = (\mathsf{A} \otimes \Lambda^2_+(\mathsf{C}))_{0,0}  \oplus (\mathsf{A} \otimes \Lambda^2_+(\mathsf{C}))_{0,4} \oplus (\mathsf{A} \otimes \Lambda^2_+(\mathsf{C}))_{\mathsf{A}}
	\end{align*}
	consisting of $\text{SO}(4)$-irreducible submodules.
\end{rmk}

\begin{rmk}
	The reader can check that some of the above submodules of $\Lambda^3(V^*)$ are, in fact, equal to one another.  Namely, we have the equalities
	\begin{align*}
	\mathsf{A} \otimes \Lambda^2_-(\mathsf{C}) & = (\Lambda^3_{27})_{2,2} &   (\mathsf{A} \otimes \Lambda^2_+(\mathsf{C}))_{0,4} & = (\Lambda^3_{27})_{0,4} \\
	(\Lambda^2(\mathsf{A}) \otimes \mathsf{C})_{1,3} & = (\Lambda^3_{27})_{1,3} & (\mathsf{A} \otimes \Lambda^2_+(\mathsf{C}))_{\mathsf{A}} & = (\Lambda^3_7)_{\mathsf{A}}
	\end{align*}
\end{rmk}


\subsection{The Refined Torsion Forms}\label{ssect:G2RefTors}

\indent \indent Let $(M^7, \varphi)$ be a $7$-manifold equipped with a $\text{G}_2$-structure $\varphi$.  Fix a point $x \in M$, choose an arbitrary associative $3$-plane $\mathsf{A}^\sharp \subset T_xM$, and let $\mathsf{C}^\sharp \subset T_xM$ denote its orthogonal coassociative $4$-plane.  Our purpose in this section is to understand how the torsion of the $\text{G}_2$-structure decomposes with respect to the splitting
$$T_xM = \mathsf{A}^\sharp \oplus \mathsf{C}^\sharp.$$
\indent In \S\ref{sssect:G2reftorsLocFrame}, we use the decompositions of Lemmas \ref{lem:G22FormDecomp} and \ref{lem:G2ThrFormDecomp} to break the torsion forms $\tau_0, \tau_1, \tau_2, \tau_3$ into $\text{SO}(4)$-irreducible pieces called \textit{refined torsion forms}.  Separately, in \S\ref{sssect:G2TorsFuns}, we set up the $\text{G}_2$-coframe bundle $\pi \colon F_{\text{G}_2} \to M$ following \cite{Bryant:2006aa}, repackaging the original $\text{G}_2$ torsion forms $\tau_0, \tau_1, \tau_2, \tau_3$ as a matrix-valued function
$$T = (T_{ij}) \colon F_{\text{G}_2} \to \text{Mat}_{7 \times 7}(\mathbb{R}) \simeq \mathbb{R}^{49}.$$
Finally, in \S\ref{sssect:G2DecompTors}, we express the functions $T_{ij}$ in terms of the (pullbacks of the) refined torsion forms.

\subsubsection{The Refined Torsion Forms in a Local $\text{SO}(4)$-Frame}\label{sssect:G2reftorsLocFrame}

\indent \indent Fix $x \in M$ and split $T_x^*M = \mathsf{A} \oplus \mathsf{C}$ as above.  All of our calculations in this subsection will be done pointwise, and we will suppress reference to $x \in M$.  By Lemmas \ref{lem:G22FormDecomp} and \ref{lem:G2ThrFormDecomp}, the torsion forms $\tau_0, \tau_1, \tau_2, \tau_3$ decompose into $\text{SO}(4)$-irreducible pieces as
\begin{subequations} \label{eq:G2-TorsDecomp-1}
\begin{align}
\tau_0 & = \tau_0 \\
\tau_1 & = (\tau_1)_{\mathsf{A}} + (\tau_1)_{\mathsf{C}} \\
\tau_2 & = (\tau_2)_{\mathsf{A}} + (\tau_2)_{1,3} + (\tau_2)_{2,0} \\
\tau_3 & = (\tau_3)_{0,0} + (\tau_3)_{0,4} + (\tau_3)_{2,2} + (\tau_3)_{1,3} + (\tau_3)_{\mathsf{C}}
\end{align}
\end{subequations}
where here
\begin{align*}
(\tau_1)_{\mathsf{A}} & \in \mathsf{A} & (\tau_2)_{\mathsf{A}} & \in (\Lambda^2_{14})_{\mathsf{A}} & (\tau_3)_{0,0} \in (\Lambda^3_{27})_{0,0} \\
(\tau_1)_{\mathsf{C}} & \in \mathsf{C} & (\tau_2)_{1,3} & \in (\Lambda^2_{14})_{1,3} & (\tau_3)_{0,4} \in (\Lambda^3_{27})_{0,4} \\
 &  & (\tau_2)_{2,0} & \in (\Lambda^2_{14})_{2,0} & (\tau_3)_{2,2} \in (\Lambda^3_{27})_{2,2} \\
 &  & &  & (\tau_3)_{1,3} \in (\Lambda^3_{27})_{1,3} \\
 &  & &  & (\tau_3)_{\mathsf{C}} \in (\Lambda^3_{27})_{\mathsf{C}}
  \end{align*}
We refer to $\tau_0, (\tau_1)_{\mathsf{A}}, (\tau_1)_{\mathsf{C}}, \ldots, (\tau_3)_{\mathsf{C}}$ as the \textit{refined torsion forms} of the $\text{G}_2$-structure at $x$ \textit{relative to the splitting $T_x^*M = \mathsf{A} \oplus \mathsf{C}$}. \\


 
\indent We now move to express the refined torsion forms in terms of a local $\text{SO}(4)$-frame, for which we will need explicit bases of $(\Lambda^2_{14})_{\mathsf{A}}, \ldots, (\Lambda^3_{27})_{\mathsf{C}}$.  To that end, let $\{e_1, \ldots, e_7\}$ be an orthonormal basis for $T_xM$ for which $\mathsf{A}^\sharp = \text{span}(e_1, e_2, e_3)$ and $\mathsf{C}^\sharp = \text{span}(e_4, e_5, e_6, e_7)$.  Let $\{e^1, \ldots, e^7\}$ denote the dual basis for $T_x^*M$. \\

\noindent \textbf{Index Ranges:} We will employ the following index ranges: $1 \leq p,q \leq 3$ and $4 \leq \alpha, \beta \leq 7$ and $1 \leq i,j,k,\ell,m \leq 7$ and $1 \leq \delta \leq 8$ and $1 \leq a \leq 5$. 

\begin{defn}
	Define the $2$-forms
	\begin{align*}
	\Upsilon_1 & = e^{45} + e^{67} & \Omega_1 & = e^{45} - e^{67} & \Delta_1 & = e^{17} + e^{24} & \Delta_5 & = e^{16} + e^{34} \\
	\Upsilon_2 & = e^{46} - e^{57} & \Omega_2 & = e^{46} + e^{57} & \Delta_2 & = e^{16} + e^{25} & \Delta_6 & = -e^{17} + e^{35} \\
	\Upsilon_3 & = -e^{47} - e^{56} & \Omega_3 & = e^{47} - e^{56} & \Delta_3 & = -e^{15} + e^{26} & \Delta_7 & = -e^{14} + e^{36} \\
	& & & & \Delta_4 & = -e^{14} + e^{36} & \Delta_8 & = e^{15} + e^{37}
	\end{align*}
	We also define
	$$\Gamma_p = 2\ast_{\mathsf{A}}\!e^p - \Upsilon_p$$
	(no summation).
\end{defn}

\begin{lem}\label{lem:G22FormBase} 
	 We have that:
	 \begin{enumerate}[label=(\alph*),nosep]
	 	\item $\{\Gamma_1, \Gamma_2, \Gamma_3\}$ is a basis of $(\Lambda^2_{14})_{\mathsf{A}}$.
	 	\item $\{\Delta_1, \ldots, \Delta_8\}$ is a basis of $(\Lambda^2_{14})_{1,3}$.
	 	\item $\{\Omega_1, \Omega_2, \Omega_3\}$ is a basis of $(\Lambda^2_{14})_{2,0}$. 
	 \end{enumerate}
\end{lem}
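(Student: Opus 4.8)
The plan is to verify directly that the listed $2$-forms lie in the claimed $\text{SO}(4)$-submodules and then count dimensions. Recall from Lemma \ref{lem:G22FormDecomp} and its proof that $(\Lambda^2_{14})_{\mathsf{A}}$ is the image of $\mathsf{A}$ under the isomorphism $W$, that $(\Lambda^2_{14})_{2,0} = \Lambda^2_-(\mathsf{C})$, and that $(\Lambda^2_{14})_{1,3} = \Lambda^2_{14} \cap (\mathsf{A} \otimes \mathsf{C})$, with respective dimensions $3$, $3$, and $8$. So for each part it suffices to check that the listed forms are (i) linearly independent (immediate, since they involve disjoint or easily-disentangled monomials), and (ii) actually members of the relevant submodule.

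For part (a): by definition $\Gamma_p = 2\ast_{\mathsf{A}} e^p - \Upsilon_p = 2L_{\mathsf{A}}(e^p) - L_{\mathsf{C}}(e^p) = W(e^p)$, using that $L(e^p) = \iota_{e^p}\phi_0 = \ast_{\mathsf{A}}e^p + \Upsilon_p$ splits into its $\Lambda^2(\mathsf{A})$-part $\ast_{\mathsf{A}}e^p$ and its $\Lambda^2_+(\mathsf{C})$-part $\Upsilon_p$ (one reads this off the explicit formula for $\phi_0$: e.g. $\iota_{e^1}\phi_0 = e^{23} + e^{45} + e^{67}$). Hence $\{\Gamma_1,\Gamma_2,\Gamma_3\} = W(\{e^1,e^2,e^3\})$ is a basis of $(\Lambda^2_{14})_{\mathsf{A}}$. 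For part (c): the $\Omega_p$ span $\Lambda^2_-(\mathsf{C})$ — indeed the anti-self-dual $2$-forms on $\mathsf{C} = \langle e^4,e^5,e^6,e^7\rangle$ are spanned by $e^{45}-e^{67}$, $e^{46}+e^{57}$ (or $-e^{47}+e^{56}$), and $e^{47}-e^{56}$, matching $\Omega_1,\Omega_2,\Omega_3$ up to sign — and we already know $\Lambda^2_-(\mathsf{C}) = (\Lambda^2_{14})_{2,0}$. For part (b): each $\Delta_a \in \mathsf{A}\otimes\mathsf{C}$ by inspection (every monomial pairs an index in $\{1,2,3\}$ with one in $\{4,5,6,7\}$), so it remains to check $\Delta_a \in \Lambda^2_{14}$, i.e. $\ast(\phi_0 \wedge \Delta_a) = -\Delta_a$, equivalently $\phi_0 \wedge \Delta_a = -\ast\Delta_a = -\Delta_a \wedge \ast\phi_0$; since $\dim(\mathsf{A}\otimes\mathsf{C}) = 12 = 8 + 4$ and $(\Lambda^2_7)_{\mathsf{C}} = \{\iota_X\phi_0 : X \in \mathsf{C}^\sharp\}$ is the complementary $4$-dimensional piece, it is enough to verify that each $\Delta_a$ is orthogonal to $\iota_{e_4}\phi_0, \ldots, \iota_{e_7}\phi_0$, a short computation with the explicit expansions of the $\iota_{e_\alpha}\phi_0$.

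The one genuine point requiring care is part (b), and specifically the consistency of the eight $\Delta_a$: note that as written $\Delta_4 = -e^{14} + e^{36}$ and $\Delta_7 = -e^{14} + e^{36}$ appear identical, so I would first confirm (against the underlying computation in the paper, presumably $\Delta_4$ should be $-e^{14}+e^{36}$ versus $\Delta_7 = -e^{14}-e^{36}$ or similar) that the corrected list of eight forms is genuinely linearly independent and spans an $8$-dimensional space; this is the main obstacle, being a bookkeeping rather than conceptual issue. Once the correct list is in hand, linear independence is visible from the monomials, membership in $\mathsf{A}\otimes\mathsf{C}$ is visible by inspection, and membership in $\Lambda^2_{14}$ reduces to the finite orthogonality check against the four forms $\iota_{e_\alpha}\phi_0$ described above; combined with the dimension count $\dim(\Lambda^2_{14})_{1,3} = 8$ from Lemma \ref{lem:G22FormDecomp}, this completes the proof.
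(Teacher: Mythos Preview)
Your approach is correct and is exactly the kind of direct verification one expects; the paper itself states this lemma without proof, so there is nothing to compare against. Your arguments for (a) and (c) are clean and complete, and your strategy for (b)---checking that each $\Delta_\delta$ lies in $\mathsf{A}\otimes\mathsf{C}$ and is orthogonal to the four generators $\iota_{e_\alpha}\phi_0$ of $(\Lambda^2_7)_{\mathsf{C}}$, then invoking the dimension count---is the right one.

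You are also right to flag the issue with $\Delta_4$: as printed in the paper, $\Delta_4 = -e^{14}+e^{36} = \Delta_7$, which is a typo. Following the evident pattern (the block $\Delta_1,\ldots,\Delta_4$ pairs indices $1,2$ with $4,5,6,7$), and checking orthogonality to $\iota_{e_5}\phi_0 = e^{14}+e^{27}+e^{36}$, the intended form is $\Delta_4 = -e^{14}+e^{27}$. With that correction the eight $\Delta_\delta$ are visibly linearly independent and the rest of your argument goes through.
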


\begin{defn}
	Define the $3$-forms
	\begin{align*}
	\phi_{\mathsf{A}} & = e^{123} & \lambda_{pq} & = e^p \wedge \Omega_q \\
	\phi_{\mathsf{C}} & = \textstyle \sum e^p \wedge \Upsilon_p & \nu_\alpha & = (\mathsf{i} \circ \mathsf{s})(\iota_{e_\alpha}\phi_0) 
	\end{align*}
	and
	\begin{align*}
	\kappa_1 & = e^1 \wedge \Upsilon_2 - e^2 \wedge \Upsilon_1 & \mu_1 & = e^{237} + e^{314} & \mu_5 & = e^{236} + e^{124} \\
	\kappa_2 & = e^1 \wedge \Upsilon_3 + e^3 \wedge \Upsilon_1 & \mu_2 & = e^{236} + e^{315} & \mu_6 & = -e^{237} + e^{125} \\
	\kappa_3 & = e^2 \wedge \Upsilon_3 + e^3 \wedge \Upsilon_2 & \mu_3 & = -e^{235} + e^{316} & \mu_7 & = -e^{234} + e^{126} \\
	\kappa_4 & = e^1 \wedge \Upsilon_1 - e^2 \wedge \Upsilon_2 & \mu_4 & = -e^{234} + e^{317} & \mu_8 & = e^{235} + e^{127} \\
	\kappa_5 & = e^2 \wedge \Upsilon_2 - e^3 \wedge \Upsilon_3.
	\end{align*}
	Note that $\varphi = \phi_{\mathsf{A}} + \phi_{\mathsf{C}}$.
\end{defn}

\begin{lem}\label{lem:G23FormBase} 
	We have that:
	\begin{enumerate}[label=(\alph*),nosep]
		\item $\{6\phi_{\mathsf{A}} - \phi_{\mathsf{C}}\}$ is a basis of $(\Lambda^3_{27})_{0,0}$
		\item $\{\kappa_1, \kappa_2, \kappa_3, \kappa_4, \kappa_5\}$ is a basis of $(\Lambda^3_{27})_{0,4}$
		\item $\{\lambda_{pq} \colon 1 \leq p,q \leq 3\}$ is a basis of $(\Lambda^3_{27})_{2,2}$
		\item $\{\mu_1, \ldots, \mu_8\}$ is basis of $(\Lambda^3_{27})_{1,3}$
		\item $\{\nu_4, \nu_5, \nu_6, \nu_7\}$ is a basis of $(\Lambda^3_{27})_{\mathsf{C}}$
	\end{enumerate}
\end{lem}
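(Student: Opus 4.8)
The plan is to verify Lemma \ref{lem:G23FormBase} by a direct, pointwise computation using the explicit identifications already established. The strategy splits into two tasks for each item: first, confirming that each listed $3$-form actually lies in the claimed submodule $(\Lambda^3_{27})_{\bullet}$; second, confirming that the listed forms are linearly independent (whence, by the dimension counts from the $\text{SO}(4)$-representation theory in \S\ref{ssect:SO4reps}, they form a basis). Since each $(\Lambda^3_{27})_{\bullet}$ is defined as $\mathsf{i}$ applied to an irreducible summand of $\text{Sym}^2_0(V^*)$ — e.g.\ $(\Lambda^3_{27})_{0,0} = \mathsf{i}(E_0)$, $(\Lambda^3_{27})_{0,4} = \mathsf{i}(\text{Sym}^2_0(\mathsf{A}))$, and so on — and $\mathsf{i}$ is an isomorphism, membership reduces to exhibiting, for each listed $3$-form, an explicit preimage under $\mathsf{i}$ in the correct summand of $\text{Sym}^2_0(V^*)$.

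For the concrete computations I would use the formula $\mathsf{i}(h_{ij}\,e^i\circ e^j) = \epsilon_{ik\ell}\,h_{ij}\,e^{jk\ell}$ recalled after (\ref{eq:G2-imap}). For item (a): applying $\mathsf{i}$ to $E_0 = \text{diag}(4,4,4,-3,-3,-3,-3)$ and using $\phi_0 = \frac{1}{6}\epsilon_{ijk}e^{ijk}$, one computes $\mathsf{i}(E_0)$ as a scalar combination of $\phi_{\mathsf{A}} = e^{123}$ and $\phi_{\mathsf{C}} = \sum e^p\wedge \Upsilon_p$; a short bookkeeping check on which $\epsilon$-symbols are nonzero should yield precisely a nonzero multiple of $6\phi_{\mathsf{A}} - \phi_{\mathsf{C}}$, and since $\Lambda^3_1 = \mathbb{R}\varphi = \mathbb{R}(\phi_{\mathsf{A}} + \phi_{\mathsf{C}})$ is one-dimensional and disjoint from $(\Lambda^3_{27})_{0,0}$, the element $6\phi_{\mathsf{A}}-\phi_{\mathsf{C}}$ is the unique (up to scale) vector in $\mathbb{R}\phi_{\mathsf{A}} \oplus \mathbb{R}\phi_{\mathsf{C}}$ orthogonal to $\varphi$, hence spans. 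For items (b)--(e) one similarly traces through: the $\kappa_a$ should be $\mathsf{i}$ of a basis of $\text{Sym}^2_0(\mathsf{A})$ (five symmetric traceless $3\times 3$ matrices in $e^1,e^2,e^3$); the $\lambda_{pq} = e^p\wedge\Omega_q$ should be $\mathsf{i}$ of a basis of $\text{Sym}^2_0(\mathsf{C})$ — here one checks the $3\times 3$ array $(\lambda_{pq})$ is linearly independent by inspecting distinct $e^{ijk}$ terms, and matches the nine-dimensional $\mathsf{A}\otimes\Lambda^2_-(\mathsf{C}) = (\Lambda^3_{27})_{2,2}$ (using the last Remark's identification); the $\mu_a$ should be $\mathsf{i}$ of a basis of $(\mathsf{A}\otimes\mathsf{C})_{1,3}$; and the $\nu_\alpha = (\mathsf{i}\circ\mathsf{s})(\iota_{e_\alpha}\phi_0)$ are by definition $\mathsf{i}$ of $\mathsf{s}$ applied to a basis $\{\iota_{e_\alpha}\phi_0\}$ of $(\Lambda^2_7)_{\mathsf{C}}$, landing in $(\mathsf{A}\otimes\mathsf{C})_{\mathsf{C}}$, so $\{\nu_4,\nu_5,\nu_6,\nu_7\}$ spans $(\Lambda^3_{27})_{\mathsf{C}}$ provided $\mathsf{s}$ restricted to $(\Lambda^2_7)_{\mathsf{C}}$ is injective — which holds since $\mathsf{s}$ is an $\text{SO}(4)$-module isomorphism.

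Linear independence in each case is the routine part: one writes out each listed $3$-form in the orthonormal basis $\{e^{ijk}\}$ of $\Lambda^3(V^*)$ and observes that distinct members involve distinct (or suitably independent) coordinate $3$-forms — for instance, $\mu_1,\dots,\mu_8$ involve the eight pairs from $\{e^{23\alpha}, e^{31\alpha}, e^{12\alpha} : \alpha = 4,5,6,7\}$ in a visibly independent pattern, and the $\kappa_a$ involve $e^{1\alpha\beta}, e^{2\alpha\beta}$ with coefficients forming a rank-$5$ array. Combined with the dimension counts $\dim (\Lambda^3_{27})_{0,0} = 1$, $\dim(\Lambda^3_{27})_{0,4} = 5$, $\dim(\Lambda^3_{27})_{2,2} = 9$, $\dim(\Lambda^3_{27})_{1,3} = 8$, $\dim(\Lambda^3_{27})_{\mathsf{C}} = 4$ (totaling $27$, consistent with Lemma \ref{lem:G2ThrFormDecomp}), each linearly independent spanning-candidate set of the right cardinality is automatically a basis.

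**The main obstacle** I anticipate is purely organizational bookkeeping with the $\epsilon$-symbols: correctly computing $\mathsf{i}$ on the various symmetric matrices requires careful tracking of signs in $\epsilon_{ik\ell}$ for the ``split'' indices $1,2,3$ versus $4,5,6,7$, and verifying that the images are exactly (not just proportional to) the stated forms $\kappa_a, \lambda_{pq}, \mu_a, \nu_\alpha$. A cleaner route that sidesteps recomputing $\mathsf{i}$ term-by-term is to observe that, by Schur's lemma, it suffices to check that each candidate set is nonzero and lies in the correct isotypic component of $\Lambda^3_{27}$ under the $\text{SO}(4)$-action — the membership-in-$(\Lambda^3_{27})_{\bullet}$ claim then follows by matching isomorphism types $\mathsf{V}_{0,0}, \mathsf{V}_{0,4}, \mathsf{V}_{2,2}, \mathsf{V}_{1,3}, \mathsf{V}_{1,1}$, provided one has verified $(\Lambda^3_{27})_{\mathsf{C}} \cong \mathsf{V}_{1,1}$ is the unique $\mathsf{V}_{1,1}$-summand (true by the Clebsch--Gordan analysis) — reducing the whole lemma to: each set is linearly independent, and each set is $\text{SO}(4)$-invariant as a span. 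I would present the proof in this second, lighter style, relegating the $\epsilon$-symbol verification to a confirmatory remark.
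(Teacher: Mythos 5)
Your approach is sound, and since the paper states this lemma without proof, the explicit verification you describe---compute $\mathsf{i}$ (or $\mathsf{i}\circ\mathsf{s}$) on bases of the relevant summands of $\text{Sym}^2_0(V^*)$, match against the listed $3$-forms, and invoke the dimension counts---is the natural route. Two refinements are worth noting, though. First, your ``lighter'' Schur's-lemma shortcut requires more than matching $\text{SO}(4)$-isotypic components inside $\Lambda^3(V^*)$: the types $\mathsf{V}_{0,0}$ and $\mathsf{V}_{1,1}$ each occur twice in $\Lambda^3(V^*)$ (once in $\Lambda^3_1\oplus\Lambda^3_7$, once in $\Lambda^3_{27}$), so for items (a) and (e) you must additionally verify that the candidate forms satisfy the $\Lambda^3_{27}$-defining conditions $\gamma\wedge\phi_0 = 0$ and $\gamma\wedge\ast\phi_0 = 0$; for items (b), (c), (d) the types $\mathsf{V}_{0,4}$, $\mathsf{V}_{2,2}$, $\mathsf{V}_{1,3}$ are multiplicity-one in all of $\Lambda^3(V^*)$ and the shortcut is clean.

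Second, and more importantly, actually carrying out your plan on item (b) would expose a sign discrepancy in the paper's definition of $\kappa_1$: a direct computation from (\ref{eq:G2-imap}) gives
\[
\mathsf{i}(e^1\circ e^2) \;=\; e^1\wedge\ast(e^2\wedge\ast\phi_0) + e^2\wedge\ast(e^1\wedge\ast\phi_0) \;=\; e^1\wedge\Upsilon_2 + e^2\wedge\Upsilon_1,
\]
with a \emph{plus} sign, matching the pattern of $\kappa_2$ and $\kappa_3$. By contrast, the element as printed, $e^1\wedge\Upsilon_2 - e^2\wedge\Upsilon_1 = e^{146}-e^{157}-e^{245}-e^{267}$, is exactly $\iota_{e_3}\!\ast\!\phi_0$, which lies in $\Lambda^3_7$, not in $\Lambda^3_{27}$ (indeed one checks $(e^1\wedge\Upsilon_2 - e^2\wedge\Upsilon_1)\wedge\phi_0 = 4\,e^{124567}\neq 0$). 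So either form of your argument---explicit preimages under $\mathsf{i}$, or the $\Lambda^3_{27}$-membership check---would catch this; the lemma is correct once $\kappa_1$ is corrected to $e^1\wedge\Upsilon_2 + e^2\wedge\Upsilon_1$. This is a useful sanity check and you should state it explicitly rather than assert the $\kappa_a$ ``should be'' $\mathsf{i}$ of a basis without verifying the signs.
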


\indent We now express $(\tau_1)_{\mathsf{A}}, (\tau_1)_{\mathsf{C}}$, etc., in terms of the above bases.  That is, we define functions $A_p, B_\alpha$ and $C_p$, $D_\delta, E_p$ and $F, G_a, J_{pq}, L_\delta, M_\alpha$ by:
\begin{subequations} \label{eq:G2-TorsDecomp-2}
\begin{align}
(\tau_1)_{\mathsf{A}} & = 6A_p\, e^p & (\tau_2)_{\mathsf{A}} & = 12C_p\,\Gamma_p & (\tau_3)_{0,0} & = 12F\,(6\phi_{\mathsf{A}} - \phi_{\mathsf{C}}) \\
(\tau_1)_{\mathsf{C}} & = 6B_\alpha\, e^\alpha & (\tau_2)_{1,3} & = 12D_\delta\,\Delta_\delta & (\tau_3)_{0,4} & = 6G_a\, \kappa_a \\
& & (\tau_2)_{2,0} & = 12E_p\,\Omega_p & (\tau_3)_{2,2} & = 12J_{pq}\, \lambda_{pq} \\
& & & & (\tau_3)_{1,3} & = 12L_\delta\,\mu_\delta \\
& & & & (\tau_3)_{\mathsf{C}} & = 6M_\alpha\,\nu_\alpha
\end{align}
\end{subequations}
The various factors of $6$ and $12$ are included simply for the sake of clearing future denominators.

Note that the bases of Lemmas \ref{lem:G22FormBase} and \ref{lem:G23FormBase} are orthogonal but \textit{not} orthonormal with respect to the inner product (\ref{eq:G2-InnerProd}) on $\Lambda^k(V^*)$.  Indeed, we have:
\begin{align*}
\Vert \Gamma_p \Vert & = \sqrt{6} & \Vert \Omega_p \Vert & = \sqrt{2} & \Vert \mu_\delta \Vert & = \sqrt{2} & \Vert \kappa_a \Vert & = 2 \\
\Vert \Delta_\delta \Vert & = \sqrt{2} & \Vert 6\phi_{\mathsf{A}} - \phi_{\mathsf{C}} \Vert & = \sqrt{42} & \Vert \nu_\alpha \Vert & = 2\sqrt{3} &  \Vert \lambda_{pq} \Vert & = \sqrt{2}
\end{align*}
Thus, in terms of the isometric isomorphisms (\ref{eq:G2-Sharp}), (\ref{eq:G2-NatIsom}), (\ref{eq:G2-DagIsom}), (\ref{eq:G2-DDagIsom}) of \S\ref{ssect:SO4reps}, we have:
\begin{subequations} \label{eq:G2-NormalizedTors}
\begin{align}
[(\tau_1)_{\mathsf{A}}]^\sharp & = 6A_p e_p & [(\tau_2)_{\mathsf{A}}]^\natural & = 12\sqrt{6}\,C_pe_p & [(\tau_3)_{0,0}]^\dagger & = 12\sqrt{42}\,F \\
[(\tau_1)_{\mathsf{C}}]^\sharp & = 6B_\alpha e_\alpha & & & [(\tau_3)_{\mathsf{C}}]^\ddag & = 12\sqrt{3}\,M_\alpha e_\alpha
\end{align}
\end{subequations}
We will need these for our calculations in \S\ref{ssect:MCAssoc} and \S\ref{ssect:MCCoassoc}.

\subsubsection{The Torsion Functions $T_{ij}$}\label{sssect:G2TorsFuns}

\indent \indent Let $(M^7, \varphi)$ be a $7$-manifold with a $\text{G}_2$-structure $\varphi$, and let $g_\varphi$ denote the underlying Riemannian metric.  Let $F_{\text{SO}(7)} \to M$ denote the oriented orthonormal coframe bundle of $g_\varphi$, and let $\omega = (\omega^1, \ldots, \omega^7) \in \Omega^1(F_{\text{SO}(7)}; \mathbb{R}^7)$ denote the tautological $1$-form.  By the Fundamental Lemma of Riemannian Geometry, there exists a unique $1$-form $\psi \in \Omega^1(F_{\text{SO}(7)}; \mathfrak{so}(7))$, the Levi-Civita connection form of $g_\varphi$, satisfying the First Structure Equation
\begin{equation*}
d\omega = -\psi \wedge \omega.
\end{equation*}
\indent Let $\pi \colon F_{\text{G}_2} \to M$ denote the $\text{G}_2$-coframe bundle of $M$.  Restricted to $F_{\text{G}_2} \subset F_{\text{SO}(7)}$, the Levi-Civita $1$-form $\psi$ is no longer a connection $1$-form in general.  Indeed, according to the splitting $\mathfrak{so}(7) = \mathfrak{g}_2 \oplus \mathbb{R}^7$, we have the decomposition
$$\psi = \theta + 2\gamma,$$
where $\theta = (\theta_{ij}) \in \Omega^1(F_{\text{G}_2}; \mathfrak{g}_2)$ is a connection $1$-form (the so-called \textit{natural connection} of the $\text{G}_2$-structure $\varphi$) and $\gamma \in \Omega^1(F_{\text{G}_2}; \mathbb{R}^7)$ is a $\pi$-semi-basic $1$-form.  Here, we are viewing $\mathbb{R}^7 = \{(\epsilon_{ijk}v_k) \in \mathfrak{so}(7) \colon  (v_1, \ldots, v_7) \in \mathbb{R}^7\}$, so that $\gamma$ takes the form
$$\gamma  = \left[ \begin{array}{c c c | c c c c}
0 & \gamma_3 & -\gamma_2 & \gamma_5 & -\gamma_4 & \gamma_7 & -\gamma_6 \\
-\gamma_3 & 0 & \gamma_1 & \gamma_6 & -\gamma_7 & -\gamma_4 & \gamma_5 \\
\gamma_2 & -\gamma_1 & 0 & -\gamma_7 & -\gamma_6 & \gamma_5 & \gamma_4 \\ \hline
-\gamma_5 & -\gamma_6 & \gamma_7 & 0 & \gamma_1 & \gamma_2 & -\gamma_3 \\
\gamma_4 & \gamma_7 & \gamma_6 & -\gamma_1 & 0 & -\gamma_3 & -\gamma_2 \\
-\gamma_7 & \gamma_4 & -\gamma_5 & -\gamma_2 & \gamma_3 & 0 & \gamma_1 \\
\gamma_6 & -\gamma_5 & -\gamma_4 & \gamma_3 & \gamma_2 & -\gamma_1 & 0
\end{array}\right]\!.$$
Since $\gamma$ is $\pi$-semibasic, we may write
$$\gamma_i = T_{ij}\omega^j$$
for some matrix-valued function $T = (T_{ij}) \colon F_{\text{G}_2} \to \text{Mat}_{7\times 7}(\mathbb{R})$.  The $1$-form $\gamma$, and hence the functions $T_{ij}$, encodes the torsion of the $\text{G}_2$-structure.  In this notation, the first structure equation reads
\begin{equation} \label{eq:G2-FirstStrEqn}
d\omega_i = -(\theta_{ij} + 2\epsilon_{ijk}\gamma_k) \wedge \omega_j 
\end{equation}
\begin{rmk}
	The reader may wonder how the functions $T_{ij}$ are related to the forms $\tau_0, \tau_1, \tau_2, \tau_3$.  In \cite{Bryant:2006aa}, Bryant expresses the torsion forms $\tau_0, \tau_1, \tau_2, \tau_3$ in terms of $T_{ij}$ as:
	\begin{align*}
	\pi^*(\tau_0) & = \textstyle \frac{24}{7}T_{ii} \\
	\pi^*(\tau_1) & = \textstyle \epsilon_{ijk}T_{ij}\,\omega_k \\
	\pi^*(\tau_2) & = \textstyle 4T_{ij}\,\omega_i \wedge \omega_j - \epsilon_{ijk\ell}T_{ij}\,\omega_k \wedge \omega_\ell \\
	\pi^*(\tau_3) & = \textstyle -\frac{3}{2}\epsilon_{ik\ell}(T_{ij} + T_{ji})\,\omega_{jk\ell} + \frac{18}{7}T_{ii}\sigma.
	\end{align*}
	In the next section, we will exhibit a sort of inverse to this, expressing the $T_{ij}$ in terms of the refined torsion forms $\pi^*(\tau_0)$, $\pi^*((\tau_1)_{\mathsf{A}}), \ldots, \pi^*((\tau_3)_{\mathsf{C}})$.
\end{rmk}

\subsubsection{Decomposition of the Torsion Functions}\label{sssect:G2DecompTors}

\indent \indent For our computations in \S\ref{ssect:MCAssoc} and \S\ref{ssect:MCCoassoc}, we will need to express the torsion functions $T_{ij}$ in terms of the functions $A_p, B_\alpha, \ldots, L_\delta, M_\alpha$.  To this end, we will continue to work on the total space of the $\text{G}_2$-coframe bundle $\pi \colon F_{\text{G}_2} \to M$, pulling back all of the quantities defined on $M$ to $F_{\text{G}_2}$.  Following common convention, we systematically omit $\pi^*$ from the notation, so that (for example) $\pi^*(\tau_0)$ will simply be denoted $\tau_0$, etc.  Note, however, that $\pi^*(e^j) = \omega_j$. \\

\indent To begin, recall that the torsion forms $\tau_0, \tau_1, \tau_2, \tau_3$ satisfy
\begin{align*}
d\varphi & = \tau_0 \ast\!\varphi + 3\tau_1 \wedge \varphi + \ast\tau_3 \\
d\ast\! \varphi & = \ \ \ \ \ \ \ \ \ \ 4\tau_1 \wedge \ast \varphi + \tau_2 \wedge \varphi.
\end{align*}
Into the left-hand sides, we substitute $\varphi = \frac{1}{6}\epsilon_{ijk}\,\omega^{ijk}$ and $\ast\varphi = \frac{1}{24} \epsilon_{ijk\ell}\, \omega^{ijk\ell}$ and use the first structure equation (\ref{eq:G2-FirstStrEqn}) to obtain
\begin{align*}
\epsilon_{ijk\ell}\,T_{im}\,\omega^{mjk\ell} & = \tau_0 \ast\!\varphi + 3\tau_1 \wedge \varphi + \ast\tau_3 \\
-\epsilon_{ijk}\, T_{\ell m}\,\omega^{m\ell ijk} & = \ \ \ \ \ \ \ \ \ \ 4\tau_1 \wedge \ast \varphi + \tau_2 \wedge \varphi
\end{align*}
Into the right-hand sides, we again substitute $\varphi = \frac{1}{6}\epsilon_{ijk}\,\omega^{ijk}$ and $\ast\varphi = \frac{1}{24} \epsilon_{ijk\ell}\, \omega^{ijk\ell}$, as well as the expansions (\ref{eq:G2-TorsDecomp-1}) and (\ref{eq:G2-TorsDecomp-2}). \\
\indent Upon equating coefficients, we obtain a system of $56 = \binom{7}{4} + \binom{7}{5}$ linear equations relating the $49 = 7^2$ functions $T_{ij}$ on the left side to the $49 = \dim(H^{0,2}(\mathfrak{g}_2))$ functions $\tau_0, A_p, B_\alpha, \ldots, L_\delta, M_\alpha$ on the right side.  One can then use a computer algebra system (we have used M\textsc{aple}) to solve this linear system for the $T_{ij}$. \\
\indent We now exhibit the result, taking advantage of the $\text{SO}(4)$-irreducible splitting
\begin{align*}
\text{Mat}_{7\times 7}(\mathbb{R}) \cong V^* \otimes V^* & \cong (\mathsf{A} \otimes \mathsf{A}) \oplus 2(\mathsf{A} \otimes \mathsf{C}) \oplus (\mathsf{C} \otimes \mathsf{C}) \\
& \cong \left( \Lambda^2(\mathsf{A})  \oplus \text{Sym}^2_0(\mathsf{A}) \oplus \mathbb{R} \right)  \oplus 2\!\left((\mathsf{A} \otimes \mathsf{C})_{1,3} \oplus  (\mathsf{A} \otimes \mathsf{C})_{\mathsf{C}} \right) \\
& \ \ \ \ \ \oplus \left(\Lambda^2_+(\mathsf{C}) \oplus \Lambda^2_-(\mathsf{C}) \oplus \mathbb{R} \oplus \text{Sym}^2_0(\mathsf{C}) \right)
\end{align*}
to highlight the structure of the solution.

We have
\begin{align*}
\frac{1}{2} \begin{bmatrix}
0 & T_{12} - T_{21} & T_{13} - T_{31} \\
T_{21} - T_{12} & 0 & T_{23} - T_{32}  \\
T_{31} - T_{13} & T_{32} - T_{23} & 0
\end{bmatrix} & =  \begin{bmatrix}
0 & A_2 + 2C_3 & -(A_2 + 2C_2) \\
-(A_3 + 2C_3) & 0 & A_1 + 2C_1 \\
A_2 + 2C_2 & -(A_1 + 2C_1) & 0
\end{bmatrix}, \\
\frac{1}{2}\begin{bmatrix}
2T_{11} & T_{12} + T_{21} & T_{13} + T_{31}  \\
T_{21} + T_{12} & 2T_{22} & T_{23} + T_{32}  \\
T_{31} + T_{13} & T_{32} + T_{23} & 2T_{33}
\end{bmatrix} & = - \begin{bmatrix}
G_4 & G_1 & G_2 \\
G_1 & G_5 - G_4 & G_3 \\
G_2 & G_3 & -G_5
\end{bmatrix} + \left(-4F + \frac{1}{24}\tau_0\right)\text{Id}_3,
\end{align*}
corresponding to $\mathsf{A} \otimes \mathsf{A} \cong \Lambda^2(\mathsf{A})  \oplus \text{Sym}^2_0(\mathsf{A}) \oplus \mathbb{R}$ and
\begin{align*}
\frac{1}{2}\begin{bmatrix}
T_{41} + T_{14} & T_{42} + T_{24} & T_{43} + T_{34} \\
T_{51} + T_{15} & T_{52} + T_{25} & T_{53} + T_{35} \\
T_{61} + T_{16} & T_{62} + T_{26} & T_{63} + T_{36} \\
T_{71} + T_{17} & T_{72} + T_{27} & T_{73} + T_{37}
\end{bmatrix} & =
\begin{bmatrix}
L_4 + L_7 & -L_1 & -L_5 \\
L_3 - L_8 & -L_2 & -L_6 \\
-L_2 - L_5 & -L_3 & -L_7 \\
-L_1 + L_6 & -L_4 & -L_8
\end{bmatrix} +
\begin{bmatrix}
-M_5 & -M_6 & M_7 \\
M_4 & M_7 & M_6 \\
-M_7 & M_4 & -M_5 \\
M_6 & -M_5 & -M_4
\end{bmatrix}
\end{align*}
and
\begin{align*}
\frac{1}{2}\begin{bmatrix}
T_{41} - T_{14} & T_{42} - T_{24} & T_{43} - T_{34} \\
T_{51} - T_{15} & T_{52} - T_{25} & T_{53} - T_{35} \\
T_{61} - T_{16} & T_{62} - T_{26} & T_{63} - T_{36} \\
T_{71} - T_{17} & T_{72} - T_{27} & T_{73} - T_{37}
\end{bmatrix} & =
\begin{bmatrix}
D_4 + D_7 & -D_1 & -D_5 \\
D_3 - D_8 & -D_2 & -D_6 \\
-D_2 - D_5 & -D_3 & -D_7 \\
-D_1 + D_6 & -D_4 & -D_8
\end{bmatrix} +
\begin{bmatrix}
-B_5 & -B_6 & B_7 \\
B_4 & B_7 & B_6 \\
-B_7 & B_4 & -B_5 \\
B_6 & -B_5 & -B_4
\end{bmatrix}
\end{align*}
corresponding to $\mathsf{A} \otimes \mathsf{C} \cong (\mathsf{A} \otimes \mathsf{C})_{1,3} \oplus  (\mathsf{A} \otimes \mathsf{C})_{\mathsf{C}}$, and
\begin{align*}
\frac{1}{2} \begin{bmatrix}
0 & T_{45} - T_{54} & T_{46} - T_{64} & T_{47} - T_{74} \\
T_{54} - T_{45} & 0 & T_{56} - T_{65} & T_{57} - T_{57} \\
T_{64} - T_{46} & T_{65} - T_{56} & 0 & T_{67} - T_{76} \\
T_{74} - T_{47} & T_{75} - T_{57} & T_{76} - T_{67} & 0
\end{bmatrix} & = \begin{bmatrix}
0 & A_1 - C_1 & A_2 - C_2 & -A_3 + C_3 \\
-A_1 + C_1 & 0 & -A_3 + C_3 & -A_2 + C_2 \\
-A_2 - C_2 & A_3 - C_3 & 0 & A_1 - C_1 \\
A_3 - C_3 & A_2 - C_2 & -A_1 + C_1 & 0
\end{bmatrix} \\
& \ \ +
\begin{bmatrix}
0 & E_1 & E_2 & E_3 \\
-E_1 & 0 & -E_3 & E_2 \\
-E_2 & E_3 & 0 & -E_1 \\
-E_3 & -E_2 & E_1 & 0
\end{bmatrix}
\end{align*}
and
\begin{align*}
& \frac{1}{2} \begin{bmatrix}
2T_{44} & T_{45} + T_{54} & T_{46} + T_{64} & T_{47} + T_{74} \\
T_{54} + T_{45} & 2T_{55} & T_{56} + T_{65} & T_{57} + T_{75} \\
T_{64} + T_{46} & T_{65} & 2T_{66} & T_{67} + T_{76} \\
T_{74} + T_{47} & T_{75} + T_{57} & T_{76} + T_{67} & 2T_{77}
\end{bmatrix} = \\
& \ \ \ \ \begin{bmatrix}
- J_{11} -J_{22}  + J_{33} & J_{23} + J_{32} & -J_{13} - J_{31} & J_{12} - J_{21}  \\
J_{23} + J_{32} & - J_{11} + J_{22} - J_{33} & -J_{21} - J_{12} & -J_{13} + J_{31} \\
-J_{13} - J_{31} & -J_{12} - J_{21} & J_{11} - J_{22} - J_{33} & - J_{23} + J_{32}  \\
J_{12} - J_{21} & -J_{13} + J_{31} & -J_{23} + J_{32} &  J_{11} + J_{22} + J_{33}
\end{bmatrix} + \left(3F + \frac{1}{24}\tau_0\right) \text{Id}_4
\end{align*}
corresponding to $\mathsf{C} \otimes \mathsf{C} \cong \Lambda^2_+(\mathsf{C}) \oplus \Lambda^2_-(\mathsf{C}) \oplus \mathbb{R} \oplus \text{Sym}^2_0(\mathsf{C})$. \\

\indent The above relations are more than we need for this work.  In fact, we will only make use of the following relations, which can be read off from the above:
\begin{align} \label{eq:G2-TorSol-1}
\epsilon_{\alpha \beta p} T_{\beta p}  = -3(B_\alpha + M_\alpha)
\end{align}
and
\begin{align} \label{eq:G2-TorSol-2}
T_{44} + T_{55} + T_{66} + T_{77} & = 3F + \frac{1}{24}\tau_0 
\end{align}
and
\begin{subequations} \label{eq:G2-TorSol-3}
\begin{align}
-(T_{45} - T_{54}) - (T_{67} - T_{76}) & = -4(A_1 - C_1) \\
(T_{57} - T_{75}) - (T_{46} - T_{64}) & = -4(A_2 - C_2) \\
(T_{47} - T_{74}) + (T_{56} - T_{65}) & = -4(A_3 - C_3).
\end{align}
\end{subequations}


\subsection{Mean Curvature of Associative $3$-Folds}\label{ssect:MCAssoc}

\indent \indent In this subsection, we derive a formula (Theorem \ref{thm:MCAssoc}) for the mean curvature of an associative $3$-fold in an arbitrary $7$-manifold $(M, \varphi)$ with $\text{G}_2$-structure $\varphi$. \\

\indent We continue with the notation of \S\ref{ssect:G2RefTors}, letting $\pi \colon F_{\text{G}_2} \to M$ denote the $\text{G}_2$-coframe bundle of $M$, and $\omega = (\omega_{\mathsf{A}}, \omega_{\mathsf{C}}) \in \Omega^1(F_{\text{G}_2}; \mathsf{A}^\sharp \oplus \mathsf{C}^\sharp)$ denote the tautological $1$-form.  We remind the reader that $\theta = (\theta_{ij}) \in \Omega^1(F_{\text{G}_2}; \mathfrak{g}_2)$ is the natural connection $1$-form, and that $\gamma = (\gamma_{ij}) \in \Omega^1(F_{\text{G}_2}; \mathbb{R}^7)$ is a $\pi$-semibasic $1$-form encoding the torsion of $\varphi$.  We will continue to write $\gamma_{ij} = \epsilon_{ijk}\gamma_k$ and $\gamma_i = T_{ij}\omega^j$ for $T = (T_{ij}) \colon F_{\text{G}_2} \to \text{Mat}_{7 \times 7}(\mathbb{R})$. \\
\indent Let $f \colon \Sigma^3 \to M^7$ denote an immersion of an associative $3$-fold into $M$, and let $f^*(F_{\text{G}_2}) \to \Sigma$ denote the pullback bundle.  Let $B \subset f^*(F_{\text{G}_2})$ denote the subbundle of coframes adapted to $\Sigma$, i.e., the subbundle whose fiber over $x \in \Sigma$ is
$$B|_x = \{u \in f^*(F_{\text{G}_2})|_x \colon u(T_x\Sigma) = \mathsf{A}^\sharp \oplus 0 \}$$
We recall (Proposition \ref{prop:G2ActStab}) that $\text{G}_2$ acts transitively on the set of associative $3$-planes with stabilizer $\text{SO}(4)$, so $B \to \Sigma$ is a well-defined $\text{SO}(4)$-bundle.  Note that on $B$, we have
$$\omega_{\mathsf{C}} = 0.$$
For the rest of \S\ref{ssect:MCAssoc}, all of our calculations will be done on the subbundle $B \subset F_{\text{G}_2}$. \\

\indent We now exploit splitting $T_xM = T_x\Sigma \oplus (T_x\Sigma)^\perp \simeq \mathsf{A}^\sharp \oplus \mathsf{C}^\sharp$ to decompose $\theta$ and $\gamma$ into $\text{SO}(4)$-irreducible pieces.  To decompose the connection $1$-form $\theta \in \Omega^1(B; \mathfrak{g}_2)$, we split
$$\mathfrak{g}_2 \cong [\mathfrak{g}_2 \cap (\Lambda^2(\mathsf{A}) \oplus \Lambda^2_+(\mathsf{C}))] \oplus [\mathfrak{g}_2 \cap (\mathsf{A} \otimes \mathsf{C})] \oplus [\mathfrak{g}_2 \cap \Lambda^2_-(\mathsf{C})],$$
so that $\theta$ takes the block form
$$\theta = \begin{bmatrix} \rho(\zeta) & -\sigma^T \\
\sigma & \zeta + \xi \end{bmatrix} = \left[ \begin{array}{c c c | c c c c}
0 & 2\zeta_3 & -2\zeta_2 & -\sigma_4 - \sigma_7 & -\sigma_3 + \sigma_8 & \sigma_2 + \sigma_5 & \sigma_1 - \sigma_6 \\
-2\zeta_3 & 0 & 2\zeta_1 & \sigma_1 & \sigma_2 & \sigma_3 & \sigma_4 \\
2\zeta_2 & -2\zeta_1 & 0  & \sigma_5 & \sigma_6 & \sigma_7 & \sigma_8 \\ \hline
\sigma_4 + \sigma_7 & -\sigma_1 & -\sigma_5 & 0 & -\zeta_1-\xi_1 & -\zeta_2 + \xi_2 & \zeta_3 + \xi_3 \\
\sigma_3 - \sigma_8 & -\sigma_2 & -\sigma_6 & \zeta_1 + \xi_1 & 0 & \zeta_3 - \xi_3 & \zeta_2 + \xi_2 \\
-\sigma_2 - \sigma_5 & -\sigma_3 & -\sigma_7 & \zeta_2 - \xi_2 & -\zeta_3 + \xi_3 & 0 & -\zeta_1 + \xi_1 \\
-\sigma_1 + \sigma_6 & -\sigma_4 & -\sigma_8 & -\zeta_3 - \xi_3 & -\zeta_2 - \xi_2 & \zeta_1 - \xi_1 & 0
\end{array}\right]\!.$$
Similarly, the $1$-form $\gamma \in \Omega^1(B; \mathbb{R}^7)$ breaks into block form as:
$$\gamma = \begin{bmatrix}
\gamma_{\mathsf{A}} & -(\gamma_{\mathsf{C}})^T \\
\gamma_{\mathsf{C}} & (\gamma_{\mathsf{A}})_+ \\
\end{bmatrix} = \left[ \begin{array}{c c c | c c c c}
0 & \gamma_3 & -\gamma_2 & \gamma_5 & -\gamma_4 & \gamma_7 & -\gamma_6 \\
-\gamma_3 & 0 & \gamma_1 & \gamma_6 & -\gamma_7 & -\gamma_4 & \gamma_5 \\
\gamma_2 & -\gamma_1 & 0 & -\gamma_7 & -\gamma_6 & \gamma_5 & \gamma_4 \\ \hline
-\gamma_5 & -\gamma_6 & \gamma_7 & 0 & \gamma_1 & \gamma_2 & -\gamma_3 \\
\gamma_4 & \gamma_7 & \gamma_6 & -\gamma_1 & 0 & -\gamma_3 & -\gamma_2 \\
-\gamma_7 & \gamma_4 & -\gamma_5 & -\gamma_2 & \gamma_3 & 0 & \gamma_1 \\
\gamma_6 & -\gamma_5 & -\gamma_4 & \gamma_3 & \gamma_2 & -\gamma_1 & 0
\end{array}\right]\!.$$ \\
\indent In this notation, the first structure equation (\ref{eq:G2-FirstStrEqn}) on $B$ reads:
\begin{align*}
d\begin{pmatrix} \omega_{\mathsf{A}} \\ 0 \end{pmatrix} = -\left(\begin{bmatrix} \rho(\zeta) & -\sigma^T \\
\sigma & \zeta + \xi \end{bmatrix} + 2\begin{bmatrix}
\gamma_{\mathsf{A}} & -(\gamma_{\mathsf{C}})^T \\
\gamma_{\mathsf{C}} & (\gamma_{\mathsf{A}})_+ \\
\end{bmatrix} \right) \wedge \begin{pmatrix} \omega_{\mathsf{A}} \\ 0 \end{pmatrix}\!.
\end{align*}
In particular, the second line gives
$$0 = -(\sigma + 2\gamma^{\mathsf{C}}) \wedge \omega_{\mathsf{A}}$$
or in detail,
\begin{equation} \label{eq:Assoc-CondensedStrEqn}
\begin{bmatrix}
\sigma_4 + \sigma_7 & -\sigma_1 & -\sigma_5 \\
\sigma_3 - \sigma_8 & -\sigma_2 & -\sigma_6 \\
-\sigma_2 - \sigma_5 & -\sigma_3 & -\sigma_7 \\
-\sigma_1 + \sigma_6 & -\sigma_4 & -\sigma_8 
\end{bmatrix} \wedge \begin{bmatrix} \omega^1 \\ \omega^2 \\ \omega^3 \end{bmatrix} = -2 \begin{bmatrix}
-\gamma_5 & -\gamma_6 & \gamma_7  \\
\gamma_4 & \gamma_7 & \gamma_6  \\
-\gamma_7 & \gamma_4 & -\gamma_5 \\
\gamma_6 & -\gamma_5 & -\gamma_4 
\end{bmatrix} \wedge \begin{bmatrix} \omega^1 \\ \omega^2 \\ \omega^3 \end{bmatrix}
\end{equation}
Note that on $B$, the $1$-forms $\sigma_\delta$ and $\gamma_\alpha$ are semibasic, and we write
\begin{align*}
\sigma_\delta & = S_{\delta p}\,\omega^p & \gamma_\alpha & = T_{\alpha p}\, \omega^p
\end{align*}
for some function $S = (S_{\delta p}) \colon B \to \mathsf{V}_{1,3} \otimes \mathsf{A}$, recalling our index ranges $1 \leq p \leq 3$ and $4 \leq \alpha \leq 7$ and $1 \leq \delta \leq 8$. \\
\indent Now, the $24$ functions $S_{\delta p}$ and the $12$ functions $T_{\alpha p}$ are not independent: the equation (\ref{eq:Assoc-CondensedStrEqn}) amounts to $12 = 4 \binom{3}{2}$ linear relations among them.  Explicitly:
$$\begin{bmatrix}
S_{13} - S_{52}  & S_{43} + S_{73} + S_{51} & -S_{42} - S_{72} - S_{11}  \\
S_{23} - S_{62}  & S_{33} - S_{83} + S_{61} & -S_{32} + S_{82} - S_{21}  \\
S_{33} - S_{72}  & -S_{23} - S_{53} + S_{71} & S_{22} + S_{52} - S_{31}  \\
S_{43} - S_{82}  & -S_{13} + S_{63} + S_{81} & S_{12} - S_{62} - S_{41}
  \end{bmatrix} 
= -2\begin{bmatrix}
T_{63} + T_{72} & -T_{53} - T_{71} & T_{52} - T_{61} \\
T_{62} - T_{73} & T_{43} - T_{61} & -T_{42} + T_{71} \\
-T_{43} - T_{52} & T_{51} - T_{73} & T_{41} + T_{72} \\
-T_{42} + T_{53} & T_{41} + T_{63} & -T_{51} - T_{62}
\end{bmatrix}
$$
In particular, these relations imply:
\begin{subequations} \label{eq:G2-TorsRel}
\begin{align}
S_{41} + S_{71} - S_{12} - S_{53} & = -4 \epsilon_{4\alpha p} T_{\alpha p} \\
S_{31} - S_{81} - S_{22} - S_{63} & = -4 \epsilon_{5\alpha p} T_{\alpha p} \\
-S_{21} - S_{51} - S_{32} - S_{73} & = -4 \epsilon_{6\alpha p} T_{\alpha p} \\
-S_{11} + S_{61} - S_{42} - S_{83} & = -4 \epsilon_{7\alpha p} T_{\alpha p}
\end{align}
\end{subequations}
With these calculations in place, we may finally compute the mean curvature of an associative $3$-fold:
\begin{thm}\label{thm:MCAssoc} 
	Let $\Sigma \subset M$ be an associative $3$-fold immersed in a $7$-manifold $M$ equipped with a $\text{G}_2$-structure.  Then the mean curvature vector $H$ of $\Sigma$ is given by
	$$H = -3 [(\tau_1)_{\mathsf{C}}]^\sharp - \frac{\sqrt{3}}{2} \left[(\tau_3)_{\mathsf{C}}\right]^\ddag.$$
	In particular, the largest torsion class of $\text{G}_2$-structures $\varphi$ for which every associative $3$-fold is minimal is $W_1 \oplus W_{14} = W_1 \cup W_{14}$, i.e., the class for which $d\varphi = \lambda \ast\! \varphi$ for some $\lambda \in \mathbb{R}$.
\end{thm}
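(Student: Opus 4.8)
The plan is to use the method of moving frames: pull everything back to the $\text{G}_2$-coframe bundle, restrict to the subbundle adapted to $\Sigma$, read the second fundamental form off the Levi-Civita connection form $\psi = \theta + 2\gamma$, and then rewrite the trace of the second fundamental form in terms of the refined torsion using the linear relations already assembled in \S\ref{ssect:G2RefTors}.

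First I would restrict attention to the subbundle $B \subset f^*(F_{\text{G}_2})$ of coframes carrying $T_x\Sigma$ onto $\mathsf{A}^\sharp$, which by Proposition \ref{prop:G2ActStab} is a genuine $\text{SO}(4)$-bundle over $\Sigma$ and on which $\omega_{\mathsf{C}} = 0$. On $B$ the Levi-Civita connection form restricts to $\psi = \theta + 2\gamma$, and from $0 = d\omega^\alpha = -\psi_{\alpha p}\wedge\omega^p$ (index ranges $1 \le p \le 3$, $4 \le \alpha \le 7$) together with Cartan's lemma one gets $\psi_{\alpha p} = h_{\alpha pq}\,\omega^q$ with $h_{\alpha pq} = h_{\alpha qp}$; these are the components of the second fundamental form of $\Sigma$ (taking care with the standard sign convention), and the mean curvature vector is $H = h_{\alpha pp}\,e_\alpha$, summed over $p$.

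Next I would read off $\psi_{\alpha p} = \theta_{\alpha p} + 2\gamma_{\alpha p}$ from the block expressions for $\theta$ and $\gamma$ on $B$ displayed above: the $\sigma$-block supplies the functions $S_{\delta p}$ and the $\gamma_{\mathsf{C}}$-block supplies the functions $T_{\alpha p}$, so each diagonal entry $h_{\alpha pp}$ is an explicit linear combination of $S$'s and $T$'s. Summing over $p$ expresses $H_\alpha$ through the combinations $S_{41} + S_{71} - S_{12} - S_{53}$, $S_{31} - S_{81} - S_{22} - S_{63}$, and so on, which are exactly the ones controlled by the structure-equation relations (\ref{eq:G2-TorsRel}); substituting those eliminates the $S$'s and leaves $H_\alpha$ proportional to $\epsilon_{\alpha\beta p}T_{\beta p}$. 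Then (\ref{eq:G2-TorSol-1}) gives $\epsilon_{\alpha\beta p}T_{\beta p} = -3(B_\alpha + M_\alpha)$, and the normalizations (\ref{eq:G2-NormalizedTors}), namely $[(\tau_1)_{\mathsf{C}}]^\sharp = 6B_\alpha e_\alpha$ and $[(\tau_3)_{\mathsf{C}}]^\ddag = 12\sqrt{3}\,M_\alpha e_\alpha$, convert the answer into $H = -3[(\tau_1)_{\mathsf{C}}]^\sharp - \tfrac{\sqrt{3}}{2}[(\tau_3)_{\mathsf{C}}]^\ddag$. The main obstacle is purely bookkeeping: matching block entries to the $S_{\delta p}$ and $T_{\alpha p}$, tracking the $\epsilon$-symbol signs, and confirming that (\ref{eq:G2-TorsRel}) and (\ref{eq:G2-TorSol-1}) conspire to produce exactly the common factor $-18$ (hence the coefficients $-3$ and $-\sqrt{3}/2$) rather than some other constant or sign.

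Finally, for the minimality statement: since $[(\tau_1)_{\mathsf{C}}]^\sharp$ and $[(\tau_3)_{\mathsf{C}}]^\ddag$ lie in the coassociative complement $\mathsf{C}^\sharp = (T_x\Sigma)^\perp$ and are linearly independent contributions, $H$ vanishes for every associative $3$-fold through every point of $M$ if and only if $(\tau_1)_{\mathsf{C}}$ and $(\tau_3)_{\mathsf{C}}$ vanish for every associative $3$-plane $\mathsf{A}^\sharp \subset T_xM$; as $\text{G}_2$ acts transitively on such planes, the set of forms with vanishing $\mathsf{C}$-component for all of them is a proper $\text{G}_2$-submodule of $\Lambda^1$ (resp. $\Lambda^3_{27}$), hence zero by irreducibility, so this holds if and only if $\tau_1 = \tau_3 = 0$ — precisely the torsion-class condition $W_1 \oplus W_{14}$. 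Since $\tau_1 = \tau_3 = 0$ reduces the torsion equations to $d\varphi = \tau_0\ast\varphi$ with $d\ast\varphi = \tau_2\wedge\varphi$, the identity $d(d\varphi) = 0$ together with the fact that $\Lambda^1\wedge\ast\varphi$ and $\Lambda^2_{14}\wedge\varphi$ lie in complementary $\text{G}_2$-summands of $\Lambda^5$ forces $d\tau_0 = 0$ and $\tau_0\tau_2 = 0$, whence $d\varphi = \lambda\ast\varphi$ with $\lambda \in \mathbb{R}$ constant; this is also why the admissible torsion collapses to $W_1 \cup W_{14}$.
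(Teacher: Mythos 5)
Your proposal follows essentially the same path as the paper's proof: restrict to the adapted $\text{SO}(4)$-bundle $B$ where $\omega_{\mathsf{C}} = 0$, read the second fundamental form off $\psi = \theta + 2\gamma$ via Cartan's lemma (noting $\theta_{\alpha p}$ and $\gamma_{\alpha p}$ are semibasic there), trace, and then chain together the structure-equation relations (\ref{eq:G2-TorsRel}), the torsion identity (\ref{eq:G2-TorSol-1}), and the normalizations (\ref{eq:G2-NormalizedTors}) to land on $H_\alpha = -18(B_\alpha + M_\alpha)$. Your additional observation on why $W_1 \oplus W_{14} = W_1 \cup W_{14}$ (via $d^2\varphi = 0$ forcing $d\tau_0 = 0$ and $\tau_0\tau_2 = 0$ in complementary $\text{G}_2$-summands of $\Lambda^5$) is correct and makes explicit a point the paper leaves implicit.
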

\begin{proof}
	The mean curvature vector may be computed as follows:
	\begin{align}
	\begin{bmatrix} H_4 \\ H_5 \\ H_6 \\ H_7 \end{bmatrix} \omega^{123} & = \begin{bmatrix}
	\psi_{41} & \psi_{42} & \psi_{43} \\
	\psi_{51} & \psi_{52} & \psi_{53} \\
	\psi_{61} & \psi_{62} & \psi_{63} \\
	\psi_{71} & \psi_{72} & \psi_{73}
	\end{bmatrix} \wedge \begin{bmatrix} \omega^{23} \\ \omega^{31} \\ \omega^{12} 
	\end{bmatrix}  \notag \\
	& = \begin{bmatrix}
	\sigma_4 + \sigma_7 & -\sigma_1 & -\sigma_5 \\
	\sigma_3 - \sigma_8 & -\sigma_2 & -\sigma_6 \\
	-\sigma_2 - \sigma_5 & -\sigma_3 & -\sigma_7 \\
	-\sigma_1 + \sigma_6 & -\sigma_4 & -\sigma_8 
	\end{bmatrix} \wedge \begin{bmatrix} \omega^{23} \\ \omega^{31} \\ \omega^{12} 
	\end{bmatrix} +\,
	2\!\begin{bmatrix}
	-\gamma_5 & -\gamma_6 & \gamma_7  \\
	\gamma_4 & \gamma_7 & \gamma_6  \\
	-\gamma_7 & \gamma_4 & -\gamma_5 \\
	\gamma_6 & -\gamma_5 & -\gamma_4 
	\end{bmatrix}  \wedge \begin{bmatrix} \omega^{23} \\ \omega^{31} \\ \omega^{12} 
	\end{bmatrix} \label{eq:Assoc-MeanCurv}
	\end{align}
	To evaluate the first term in (\ref{eq:Assoc-MeanCurv}), we substitute $\sigma_\delta = S_{\delta p}\omega^p$, followed by (\ref{eq:G2-TorsRel}), and finally (\ref{eq:G2-TorSol-1}), to obtain:
	\begin{align*}
	\begin{bmatrix}
	\sigma_4 + \sigma_7 & -\sigma_1 & -\sigma_5 \\
	\sigma_3 - \sigma_8 & -\sigma_2 & -\sigma_6 \\
	-\sigma_2 - \sigma_5 & -\sigma_3 & -\sigma_7 \\
	-\sigma_1 + \sigma_6 & -\sigma_4 & -\sigma_8 
	\end{bmatrix} \wedge \begin{bmatrix} \omega^{23} \\ \omega^{31} \\ \omega^{12} 
	\end{bmatrix} & = \begin{bmatrix}
	S_{41} + S_{71} - S_{12} - S_{53} \\
	S_{31} - S_{81} - S_{22} - S_{63} \\
	-S_{21} - S_{51} - S_{32} - S_{73} \\
	-S_{11} + S_{61} - S_{42} - S_{83}
	\end{bmatrix}
	\omega^{123} \\
	& = -4 \begin{bmatrix} \epsilon_{4\alpha p}T_{\alpha p} \\ \epsilon_{5\alpha p}T_{\alpha p} \\ \epsilon_{6\alpha p}T_{\alpha p} \\ \epsilon_{7\alpha p}T_{\alpha p} \end{bmatrix} \omega^{123} =
	-12\begin{bmatrix}
	B_4 + M_4 \\
	B_5 + M_5 \\
	B_6 + M_6 \\
	B_7 + M_7
	\end{bmatrix}\omega^{123}
	\end{align*}
	Similarly, to evaluate the second term in (\ref{eq:Assoc-MeanCurv}), we substitute $\gamma_\alpha = T_{\alpha p}\omega^p$ followed by (\ref{eq:G2-TorSol-1}) to obtain:
	\begin{align*}
	2\begin{bmatrix}
	-\gamma_5 & -\gamma_6 & \gamma_7  \\
	\gamma_4 & \gamma_7 & \gamma_6  \\
	-\gamma_7 & \gamma_4 & -\gamma_5 \\
	\gamma_6 & -\gamma_5 & -\gamma_4 
	\end{bmatrix}  \wedge \begin{bmatrix} \omega^{23} \\ \omega^{31} \\ \omega^{12} 
	\end{bmatrix}
	& = -2 \begin{bmatrix} \epsilon_{4\alpha p}T_{\alpha p} \\ \epsilon_{5\alpha p}T_{\alpha p} \\ \epsilon_{6\alpha p}T_{\alpha p} \\ \epsilon_{7\alpha p}T_{\alpha p} \end{bmatrix} \omega^{123} =
	-6\begin{bmatrix}
	B_4 + M_4 \\
	B_5 + M_5 \\
	B_6 + M_6 \\
	B_7 + M_7
	\end{bmatrix} \omega^{123}
	\end{align*}
	We conclude that
	$$H_\alpha = -18\,B_\alpha - 18\,M_\alpha,$$
	and so (\ref{eq:G2-NormalizedTors}) yields
	$$H = -3 [(\tau_1)_{\mathsf{C}}]^\sharp - \frac{\sqrt{3}}{2} \left[(\tau_3)_{\mathsf{C}}\right]^\ddag.$$
	\noindent In particular, the largest torsion class for which $H = 0$ for all associatives is the one for which $\tau_1 = \tau_3 = 0$, which is $W_1 \oplus W_{14} = W_1 \cup W_{14}$.
\end{proof}


\subsection{Mean Curvature of Coassociative $4$-Folds}\label{ssect:MCCoassoc}

\indent \indent In this subsection, we derive a formula (Theorem \ref{thm:MCCoass}) for the mean curvature of a coassociative $4$-fold in an arbitrary $7$-manifold $(M, \varphi)$ with $\text{G}_2$-structure $\varphi$.  In the process, we observe a necessary condition (Theorem \ref{thm:CoassObs}) for the local existence of coassociative $4$-folds.  We continue with the notation of \S\ref{ssect:G2RefTors}. \\

\indent Let $f \colon \Sigma^4 \to M^7$ denote an immersion of a coassociative $4$-fold into $M$, and let $f^*(F_{\text{G}_2}) \to \Sigma$ denote the pullback bundle.  Let $B \subset f^*(F_{\text{G}_2})$ denote the subbundle of coframes adapted to $\Sigma$, i.e., the subbundle whose fiber over $x \in \Sigma$ is
$$B|_x = \{u \in f^*(F_{\text{G}_2})|_x \colon u(T_x\Sigma) = 0 \oplus \mathsf{C}^\sharp \}$$
We recall (Proposition \ref{prop:G2ActStab}) that $\text{G}_2$ acts transitively on the set of coassociative $4$-planes with stabilizer $\text{SO}(4)$, so $B \to \Sigma$ is a well-defined $\text{SO}(4)$-bundle.  Note that on $B$, we have
$$\omega_{\mathsf{A}} = 0.$$
For the rest of \S\ref{ssect:MCCoassoc}, all of our calculations will be done on the subbundle $B \subset F_{\text{G}_2}$. \\

\indent As in \S\ref{ssect:MCAssoc}, we use the splitting $T_xM = (T_x\Sigma)^\perp \oplus T_x\Sigma \simeq \mathsf{A}^\sharp \oplus \mathsf{C}^\sharp$ to decompose $\theta$ and $\gamma$ into $\text{SO}(4)$-irreducible pieces.  The result is the identical: the connection $1$-form $\theta \in \Omega^1(B; \mathfrak{g}_2)$ takes the block form
$$\theta = \begin{bmatrix} \rho(\zeta) & -\sigma^T \\
\sigma & \zeta + \xi \end{bmatrix} = \left[ \begin{array}{c c c | c c c c}
0 & 2\zeta_3 & -2\zeta_2 & -\sigma_4 - \sigma_7 & -\sigma_3 + \sigma_8 & \sigma_2 + \sigma_5 & \sigma_1 - \sigma_6 \\
-2\zeta_3 & 0 & 2\zeta_1 & \sigma_1 & \sigma_2 & \sigma_3 & \sigma_4 \\
2\zeta_2 & -2\zeta_1 & 0  & \sigma_5 & \sigma_6 & \sigma_7 & \sigma_8 \\ \hline
\sigma_4 + \sigma_7 & -\sigma_1 & -\sigma_5 & 0 & -\zeta_1-\xi_1 & -\zeta_2 + \xi_2 & \zeta_3 + \xi_3 \\
\sigma_3 - \sigma_8 & -\sigma_2 & -\sigma_6 & \zeta_1 + \xi_1 & 0 & \zeta_3 - \xi_3 & \zeta_2 + \xi_2 \\
-\sigma_2 - \sigma_5 & -\sigma_3 & -\sigma_7 & \zeta_2 - \xi_2 & -\zeta_3 + \xi_3 & 0 & -\zeta_1 + \xi_1 \\
-\sigma_1 + \sigma_6 & -\sigma_4 & -\sigma_8 & -\zeta_3 - \xi_3 & -\zeta_2 - \xi_2 & \zeta_1 - \xi_1 & 0
\end{array}\right]\!.$$
and the $1$-form $\gamma \in \Omega^1(B; \mathbb{R}^7)$ takes the block form
$$\gamma = \begin{bmatrix}
\gamma_{\mathsf{A}} & -(\gamma_{\mathsf{C}})^T \\
\gamma_{\mathsf{C}} & (\gamma_{\mathsf{A}})_+ \\
\end{bmatrix} = \left[ \begin{array}{c c c | c c c c}
0 & \gamma_3 & -\gamma_2 & \gamma_5 & -\gamma_4 & \gamma_7 & -\gamma_6 \\
-\gamma_3 & 0 & \gamma_1 & \gamma_6 & -\gamma_7 & -\gamma_4 & \gamma_5 \\
\gamma_2 & -\gamma_1 & 0 & -\gamma_7 & -\gamma_6 & \gamma_5 & \gamma_4 \\ \hline
-\gamma_5 & -\gamma_6 & \gamma_7 & 0 & \gamma_1 & \gamma_2 & -\gamma_3 \\
\gamma_4 & \gamma_7 & \gamma_6 & -\gamma_1 & 0 & -\gamma_3 & -\gamma_2 \\
-\gamma_7 & \gamma_4 & -\gamma_5 & -\gamma_2 & \gamma_3 & 0 & \gamma_1 \\
\gamma_6 & -\gamma_5 & -\gamma_4 & \gamma_3 & \gamma_2 & -\gamma_1 & 0
\end{array}\right]\!.$$ \\
\indent In this notation, the first structure equation (\ref{eq:G2-FirstStrEqn}) on $B$ reads:
\begin{align*}
d\begin{pmatrix} 0 \\ \omega_{\mathsf{C}} \end{pmatrix} = -\left(\begin{bmatrix} \rho(\zeta) & -\sigma^T \\
\sigma & \zeta + \xi \end{bmatrix} + 2\begin{bmatrix}
\gamma_{\mathsf{A}} & -(\gamma_{\mathsf{C}})^T \\
\gamma_{\mathsf{C}} & (\gamma_{\mathsf{A}})_+ \\
\end{bmatrix} \right) \wedge \begin{pmatrix} 0 \\ \omega_{\mathsf{C}} \end{pmatrix}
\end{align*}
In particular, the first line gives
$$0 = (\sigma^T + 2(\gamma_{\mathsf{C}})^T ) \wedge \omega_{\mathsf{C}}$$
or in full detail,
\begin{equation} \label{eq:Coassoc-CondensedStrEqn}
\begin{bmatrix}
-\sigma_4 - \sigma_7 & -\sigma_3 + \sigma_8 & \sigma_2 + \sigma_5 & \sigma_1 - \sigma_6 \\
 \sigma_1 &  \sigma_2 & \sigma_3  & \sigma_4 \\
 \sigma_5 & \sigma_6 & \sigma_7  & \sigma_8 
\end{bmatrix} \wedge \begin{bmatrix} \eta^1 \\ \eta^2 \\ \eta^3 \\ \eta^4 \end{bmatrix} = -2 \begin{bmatrix}
\gamma_5 & -\gamma_4 & \gamma_7 & -\gamma_6 \\
\gamma_6 & -\gamma_7 & -\gamma_4 & \gamma_5 \\
-\gamma_7 & -\gamma_6 & \gamma_5 & \gamma_4
\end{bmatrix} \wedge \begin{bmatrix} \eta^1 \\ \eta^2 \\ \eta^3 \\ \eta^4 \end{bmatrix}
\end{equation}
Note that on $B$, the $1$-forms $\sigma_\delta$ and $\gamma_\alpha$ are semibasic, so we can write
\begin{align*}
\sigma_\delta & = S_{\delta \alpha}\, \omega^\alpha & \gamma_\beta & = T_{\beta \alpha}\, \omega^\alpha
\end{align*}
for some function $S = (S_{\delta \alpha}) \colon B \to \mathsf{V}_{1,3} \otimes \mathsf{C}$, recalling our index ranges $1 \leq p \leq 3$ and $4 \leq \alpha,\beta \leq 7$ and $1 \leq \delta \leq 8$. \\
\indent Note that the $32$ functions $S_{\delta \alpha}$ and the $16$ functions $T_{\beta \alpha}$ are not independent: the equation (\ref{eq:Coassoc-CondensedStrEqn}) shows that they satisfy $3 \binom{4}{2} = 18$ linear relations.  Explicitly:
$$\begin{bmatrix}
S_{15} - S_{24} \  \ & S_{55} - S_{64} \ \ & S_{84} + S_{45} + S_{75} - S_{34} \\
S_{16} - S_{34} \ & S_{56} - S_{74} \ & S_{54} + S_{24} + S_{46} + S_{76} \\
S_{26} - S_{35} \ & S_{66} - S_{75} \ & S_{14} - S_{64} + S_{47} + S_{77} \\
S_{17} - S_{44} \ &  S_{57} - S_{84} \ & S_{55} - S_{86} + S_{25} + S_{36} \\
S_{27} - S_{45} \ & S_{67} - S_{85} \ & -S_{65} + S_{15} + S_{37} - S_{87} \\
S_{37} - S_{46} \ & S_{77} - S_{86} \ & -S_{66} - S_{57} + S_{16} - S_{27}
\end{bmatrix} = 2\begin{bmatrix}
- T_{74} - T_{65} & - T_{64} + T_{75} & T_{44} + T_{55} \\
- T_{44} - T_{66} & T_{54} + T_{76} & -T_{74} + T_{56} \\
- T_{45} + T_{76} & T_{55} + T_{66} &  T_{64} + T_{57} \\
T_{54} - T_{67} & T_{44} + T_{77} & -T_{75} - T_{46} \\
T_{55} + T_{77} & T_{45} + T_{67} & T_{65} - T_{47} \\
T_{56} + T_{47} & T_{46} - T_{57} & T_{66} + T_{77}
\end{bmatrix}$$ \\
\indent We make two observations on this system of equations.  First, we observe the relation
\begin{align*}
(T_{44} + T_{55}) + (T_{55} + T_{66}) + (T_{55} + T_{77}) + (T_{66} + T_{77}) + (T_{44} + T_{77}) + (T_{44} + T_{66}) & = 0.
\end{align*}
Substituting (\ref{eq:G2-TorSol-2}), this equation simplifies to
$$3F + \frac{1}{24}\tau_0 = 0.$$
Substituting (\ref{eq:G2-NormalizedTors}), we have proved:

\begin{thm}\label{thm:CoassObs} 
	If a coassociative $4$-fold $\Sigma$ exists in $M$, then the following relation holds at points of $\Sigma$:
	\begin{equation}
	\tau_0 = -\frac{\sqrt{42}}{7}\,[(\tau_3)_{0,0}]^\dagger \label{eq:G2-Obs}
	\end{equation}
	In particular, if $\tau_3 = 0$ and $\tau_0$ is non-vanishing (so the torsion takes values in $(W_1 \oplus W_7 \oplus W_{14}) - (W_7 \oplus W_{14})$), then $M$ admits no coassociative $4$-folds (even locally). \\
\end{thm}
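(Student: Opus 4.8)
The plan is to read the obstruction off the first structure equation on a coframe bundle adapted to $\Sigma$, running essentially the same machinery as in the associative case of \S\ref{ssect:MCAssoc}, but this time extracting a relation among the torsion forms rather than an expression for the second fundamental form. So I would pull $F_{\text{G}_2}$ back to the coassociative $4$-fold $\Sigma$ and pass to the $\text{SO}(4)$-subbundle $B$ of adapted coframes, on which $\omega_{\mathsf{A}} = 0$ and the forms $\theta,\gamma$ take the block shapes displayed above. The $\mathsf{A}$-component of (\ref{eq:G2-FirstStrEqn}) restricted to $B$ then gives $(\sigma^T + 2(\gamma_{\mathsf{C}})^T)\wedge\omega_{\mathsf{C}} = 0$. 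Writing $\sigma_\delta = S_{\delta\alpha}\omega^\alpha$ and $\gamma_\beta = T_{\beta\alpha}\omega^\alpha$ (both semibasic on $B$) and applying Cartan's Lemma row by row, I obtain $3\binom{4}{2} = 18$ scalar relations coupling the $S_{\delta\alpha}$ and $T_{\beta\alpha}$.

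The crucial step is to notice that a suitable integer combination of six of these relations — precisely the six whose right-hand sides are of the form $T_{\alpha\alpha}+T_{\beta\beta}$ — has every $S_{\delta\alpha}$ cancel, leaving $3(T_{44}+T_{55}+T_{66}+T_{77}) = 0$, hence $\mathrm{tr}(T|_{\mathsf{C}\otimes\mathsf{C}}) = 0$. The reason this cancellation is forced is representation-theoretic: the six-term combination is the projection of the structure equation onto the (one-dimensional) $\text{SO}(4)$-invariant subspace of $\mathsf{A}\otimes\Lambda^2(\mathsf{C}) \cong \V_{0,4}\oplus\V_{0,2}\oplus\V_{0,0}\oplus\V_{2,2}$, and the connection part $\sigma$ — being $\mathfrak{g}_2$-valued, hence taking values in $\mathfrak{g}_2\cap(\mathsf{A}\otimes\mathsf{C})\cong\V_{1,3}$ — contributes only through $\V_{1,3}\otimes\mathsf{C} = \V_{2,4}\oplus\V_{2,2}\oplus\V_{0,4}\oplus\V_{0,2}$, which contains no trivial $\text{SO}(4)$-summand. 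Thus on the trivial isotypic component the structure equation degenerates into a condition on the torsion alone, namely the vanishing of its trace on the tangent ($\mathsf{C}$) block.

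With $T_{44}+T_{55}+T_{66}+T_{77} = 0$ in hand, substituting the torsion dictionary (\ref{eq:G2-TorSol-2}) gives $3F + \tfrac{1}{24}\tau_0 = 0$, and feeding in (\ref{eq:G2-NormalizedTors}), which reads $[(\tau_3)_{0,0}]^\dagger = 12\sqrt{42}\,F$, yields $\tau_0 = -\tfrac{6}{\sqrt{42}}[(\tau_3)_{0,0}]^\dagger = -\tfrac{\sqrt{42}}{7}[(\tau_3)_{0,0}]^\dagger$, which is (\ref{eq:G2-Obs}). For the ``in particular'' clause: (\ref{eq:G2-Obs}) is a pointwise identity along any coassociative $4$-fold, so if $\tau_3 \equiv 0$ on $M$ then $(\tau_3)_{0,0} \equiv 0$ and any coassociative $\Sigma$ must satisfy $\tau_0|_\Sigma = 0$; contrapositively, if $\tau_0$ is nowhere vanishing then no point of $M$ can lie on a coassociative $4$-fold, even locally.

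I expect the main obstacle to be bookkeeping rather than insight: writing out the $18$ Cartan relations explicitly, identifying which six have right-hand side $T_{\alpha\alpha}+T_{\beta\beta}$, and verifying that the correct signed sum of their left-hand sides really does annihilate every $S$-term. Once one accepts, from the isotypic argument above, that some torsion-only relation must drop out of the trivial-isotypic projection, the rest is the linear substitution using the torsion functions of \S\ref{sssect:G2DecompTors}, and the numerical constant $-\tfrac{\sqrt{42}}{7}$ is fixed by the normalizations chosen for $\dagger$ and for the $E_0$-basis of $(\Lambda^3_{27})_{0,0}$.
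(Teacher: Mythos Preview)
Your proposal is correct and follows essentially the same approach as the paper: restrict the first structure equation to the adapted $\text{SO}(4)$-bundle $B$ over $\Sigma$, extract the $18$ relations between the $S_{\delta\alpha}$ and $T_{\beta\alpha}$, observe that summing the six relations whose right-hand sides are $T_{\alpha\alpha}+T_{\beta\beta}$ annihilates all $S$-terms and yields $T_{44}+T_{55}+T_{66}+T_{77}=0$, and then substitute (\ref{eq:G2-TorSol-2}) and (\ref{eq:G2-NormalizedTors}) to obtain (\ref{eq:G2-Obs}). Your representation-theoretic explanation for why the $S$-terms must cancel on the trivial isotypic component is a nice addition that the paper does not make explicit (the paper simply states the relation as an observation from the displayed $18\times 2$ matrix of equations).
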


\begin{cor}\label{cor:CoassObs} 
	Fix $x \in M$. If every coassociative $4$-plane in $T_xM$ is tangent to a coassociative $4$-fold, then $\tau_0|_x = 0$ and $\tau_3|_x = 0$.
\end{cor}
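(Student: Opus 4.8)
The plan is to combine Theorem~\ref{thm:CoassObs} with the $\text{G}_2$-representation theory of $\Lambda^3_{27}$, working pointwise at the given $x \in M$. Abbreviate $\tau_0 := \tau_0|_x \in \mathbb{R}$ and $\tau_3 := \tau_3|_x \in \Lambda^3_{27}(T_x^*M)$. For a coassociative $4$-plane $C \subset T_xM$, the key preliminary observation is that the element $\mathsf{i}(E_0) \in (\Lambda^3_{27})_{0,0}$ of~(\ref{eq:G2-DagIsom}) depends only on $C$, not on any auxiliary adapted $\text{G}_2$-coframe: indeed $\mathsf{i}$ is $\text{G}_2$-equivariant and $E_0$ of~(\ref{eq:E0Def}) is fixed by the $\text{SO}(4)$-stabilizer of $C$, since that group preserves both $C$ and $C^\perp$ and $E_0$ equals a scalar multiple of the identity on each. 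Writing $u_C := \mathsf{i}(E_0)/\Vert \mathsf{i}(E_0)\Vert$ for the resulting unit vector, the isometry $\dagger$ of~(\ref{eq:G2-DagIsom}) satisfies $[\beta]^\dagger = \langle \beta, u_C\rangle$ for all $\beta \in (\Lambda^3_{27})_{0,0}$; moreover the assignment $C \mapsto u_C$ is $\text{G}_2$-equivariant, so as $C$ ranges over all coassociative $4$-planes the $u_C$ sweep out a single $\text{G}_2$-orbit of unit vectors in $\Lambda^3_{27}(T_x^*M)$.

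Next I would feed in the hypothesis. By assumption each coassociative $4$-plane $C \subset T_xM$ is the tangent plane at $x$ of some coassociative $4$-fold $\Sigma$, so Theorem~\ref{thm:CoassObs} applied to $\Sigma$ at $x$ (with the splitting $T_x^*M = \mathsf{A} \oplus \mathsf{C}$, $\mathsf{C}^\sharp = C$) yields
\[ \tau_0 = -\tfrac{\sqrt{42}}{7}\,[(\tau_3)_{0,0}]^\dagger = -\tfrac{\sqrt{42}}{7}\,\langle \tau_3, u_C\rangle, \]
the second equality because $(\tau_3)_{0,0}$ is the orthogonal projection of $\tau_3$ onto $(\Lambda^3_{27})_{0,0}$, which is spanned by $u_C$. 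Hence $\langle \tau_3, u_C\rangle$ takes the same value $c := -\tfrac{7}{\sqrt{42}}\tau_0$ for every coassociative $C$.

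Now average over the compact group $\text{G}_2$. Fix one coassociative plane $C_0$; by transitivity (Proposition~\ref{prop:G2ActStab}) the barycentre $\bar u := \int_{\text{G}_2} g\cdot u_{C_0}\, dg$ (normalized Haar measure) is $\text{G}_2$-invariant, hence $\bar u = 0$ because $\Lambda^3_{27}$ is a nontrivial irreducible $\text{G}_2$-module. On the other hand, using equivariance of $C \mapsto u_C$, $\langle \tau_3, \bar u\rangle = \int_{\text{G}_2} \langle \tau_3, g\cdot u_{C_0}\rangle\,dg = \int_{\text{G}_2} \langle \tau_3, u_{gC_0}\rangle\,dg = c$. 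Therefore $c = 0$, so $\tau_0|_x = 0$ and $\langle \tau_3, u_C\rangle = 0$ for every coassociative $C$. Finally, the linear span of the orbit $\{u_C\}$ is a nonzero $\text{G}_2$-submodule of $\Lambda^3_{27}(T_x^*M)$, hence equals $\Lambda^3_{27}(T_x^*M)$ by irreducibility; so $\tau_3|_x$ is orthogonal to all of $\Lambda^3_{27}(T_x^*M)$, forcing $\tau_3|_x = 0$.

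The one step that requires care is the first paragraph: verifying that $\mathsf{i}(E_0)$, together with the sign and scale conventions built into $\dagger$, is a genuinely $\text{G}_2$-equivariant function of the coassociative plane alone, so that all the scalar identities coming from Theorem~\ref{thm:CoassObs} are governed by a single $\text{G}_2$-orbit. Once that is established, the averaging and Schur-irreducibility steps are routine; note in particular that no integration over $M$ (or over submanifolds) is needed — only over the compact group $\text{G}_2$.
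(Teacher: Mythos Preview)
Your argument is correct and is essentially the paper's proof with the Schur's Lemma step made explicit. The paper simply observes that the relation $\tau_0 = -\tfrac{\sqrt{42}}{7}\,[(\tau_3)_{0,0}]^\dagger$, holding for every coassociative $4$-plane, is a $\text{G}_2$-invariant linear relation between $\tau_0|_x \in \mathbb{R}$ and $\tau_3|_x \in \Lambda^3_{27}$, and invokes Schur's Lemma to conclude both vanish; you have unpacked that one-line invocation into the averaging step (yielding $\tau_0|_x=0$) followed by the orbit-spans step (yielding $\tau_3|_x=0$).

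Regarding the caveat you flagged: there is no real issue. The element $E_0 = 4\,\mathrm{Id}_{\mathsf{A}} - 3\,\mathrm{Id}_{\mathsf{C}} \in \text{Sym}^2_0(V^*)$ is manifestly fixed by the $\text{SO}(4)$-stabilizer of $C$, and $\mathsf{i}$ is $\text{G}_2$-equivariant by construction, so $u_C$ is a well-defined $\text{G}_2$-equivariant function of $C$. Since $\dagger$ is an isometry and $[\mathsf{i}(E_0)]^\dagger > 0$, your identification $[\beta]^\dagger = \langle \beta, u_C\rangle$ (with the correct sign) also holds.
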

\begin{proof}
	The hypotheses imply that equation (\ref{eq:G2-Obs}) holds for all coassociative $4$-planes at $x \in M$.  Thus, we have a $\text{G}_2$-invariant linear relation between $\tau_0|_x$ and $\tau_3|_x$. This implies that $\tau_0|_x = 0$ and $\tau_3|_x = 0$ by Schur's Lemma. \\
\end{proof}

\indent Second, we observe the three relations
\begin{subequations} \label{eq:Coassoc-TorsRel}
\begin{align}
(S_{17} - S_{67}) + (S_{26} + S_{56}) + (-S_{35} + S_{85}) - (S_{44} + S_{74}) & = -4(T_{45} - T_{54}) - 4(T_{67} - T_{76}) \\
S_{14} + S_{25} + S_{36} + S_{47} & = 4(T_{57} - T_{75}) - 4(T_{46} - T_{64}) \\
S_{54} + S_{65} + S_{76} + S_{87} & = 4(T_{47} - T_{74}) + 4(T_{56} - T_{65}).
\end{align}
\end{subequations}
We may now compute the mean curvature of a coassociative $4$-fold: \\
\begin{thm}\label{thm:MCCoass} 
	Let $\Sigma \subset M$ be a coassociative $4$-fold immersed in a $7$-manifold $M$ equipped with a $\text{G}_2$-structure.  Then the mean curvature vector $H$ of $\Sigma$ is given by
	$$H = -4 [(\tau_1)_{\mathsf{A}}]^\sharp + \frac{\sqrt{6}}{3} \left[(\tau_2)_{\mathsf{A}}\right]^\natural.$$
	In particular, the largest torsion class of $\text{G}_2$-structures $\varphi$ for which every coassociative $4$-fold is minimal is $W_1 \oplus W_{27}$, i.e., the class for which $d\ast\!\varphi = 0$.
\end{thm}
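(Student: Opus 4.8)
The plan is to rerun, \emph{mutatis mutandis}, the argument proving Theorem~\ref{thm:MCAssoc}, now with the roles of $\mathsf A^\sharp$ and $\mathsf C^\sharp$ interchanged. A coassociative $4$-fold $\Sigma$ has normal bundle modeled on $\mathsf A^\sharp$ and tangent bundle modeled on $\mathsf C^\sharp$, so its mean curvature vector lies in the normal directions, $H = H_p\,e_p$ with $1 \le p \le 3$. Working on the adapted $\text{SO}(4)$-bundle $B \subset f^*(F_{\text{G}_2})$ (on which $\omega_{\mathsf A} = 0$), the trace of the second fundamental form is read off from the mixed block of $\psi = \theta + 2\gamma$:
\begin{equation*}
\begin{bmatrix} H_1 \\ H_2 \\ H_3 \end{bmatrix} \omega^{4567}
= \begin{bmatrix} \psi_{14} & \psi_{15} & \psi_{16} & \psi_{17} \\ \psi_{24} & \psi_{25} & \psi_{26} & \psi_{27} \\ \psi_{34} & \psi_{35} & \psi_{36} & \psi_{37} \end{bmatrix} \wedge \begin{bmatrix} \omega^{567} \\ -\omega^{467} \\ \omega^{457} \\ -\omega^{456} \end{bmatrix},
\end{equation*}
where the entries $\psi_{p\alpha}$ ($1 \le p \le 3$, $4 \le \alpha \le 7$) are obtained from the block forms of $\theta$ and $\gamma$ recorded above; thus the right-hand side is a matrix with entries linear in the $\sigma_\delta$ (from $\theta$) plus twice a matrix with entries linear in the $\gamma_\beta$, each wedged against $(\omega^{567}, -\omega^{467}, \omega^{457}, -\omega^{456})^T$.

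Next I would substitute the semibasic expansions $\sigma_\delta = S_{\delta\alpha}\,\omega^\alpha$ and $\gamma_\beta = T_{\beta\alpha}\,\omega^\alpha$. Wedging against $(\omega^{567}, -\omega^{467}, \omega^{457}, -\omega^{456})^T$ simply extracts, for each $p$, the sum over $\alpha$ of the $\omega^\alpha$-coefficient of $\psi_{p\alpha}$ (the trace of the mixed block), so each $H_p\,\omega^{4567}$ becomes an explicit $\mathbb{R}$-linear combination of the $S_{\delta\alpha}$ and the $T_{\beta\alpha}$. One checks that the combinations of $S$'s occurring here are exactly those on the left-hand sides of the three relations~(\ref{eq:Coassoc-TorsRel}) --- the consequences of the structure equation~(\ref{eq:Coassoc-CondensedStrEqn}) obtained by pairing opposite index positions --- so using~(\ref{eq:Coassoc-TorsRel}) replaces the $S$-part by a combination of the $T_{\beta\alpha}$; collecting with the direct $\gamma$-contributions leaves each $H_p$ equal to a fixed multiple of the antisymmetric $T$-combinations of~(\ref{eq:G2-TorSol-3}). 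Substituting~(\ref{eq:G2-TorSol-3}) then gives $H_p = -24\,(A_p - C_p)$, and the normalizations~(\ref{eq:G2-NormalizedTors}), namely $[(\tau_1)_{\mathsf A}]^\sharp = 6A_p\,e_p$ and $[(\tau_2)_{\mathsf A}]^\natural = 12\sqrt 6\,C_p\,e_p$, rewrite this as
\begin{equation*}
H = -24\,A_p\,e_p + 24\,C_p\,e_p = -4\,[(\tau_1)_{\mathsf A}]^\sharp + \tfrac{\sqrt 6}{3}\,[(\tau_2)_{\mathsf A}]^\natural,
\end{equation*}
which is the claimed formula.

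For the minimality statement, the ``if'' direction is immediate: if $\tau_1 = \tau_2 = 0$ then $H \equiv 0$ for every coassociative $4$-fold. And $\tau_1 = \tau_2 = 0$ is equivalent to $d\!\ast\!\varphi = 0$ by the torsion equations~(\ref{eq:G2-TorsionEqns}), since $4\tau_1 \wedge \ast\varphi + \tau_2 \wedge \varphi$ splits into $\Lambda^5_7$- and $\Lambda^5_{14}$-pieces that recover $\tau_1$ and $\tau_2$ separately. The ``only if''/maximality half goes exactly as in Theorem~\ref{thm:MCAssoc}: because $\sharp$ and $\natural$ are isomorphisms and, as the associative normal plane varies over its $\text{G}_2$-orbit, the refined pieces $(\tau_1)_{\mathsf A}$ and $(\tau_2)_{\mathsf A}$ sweep out generating subsets of the $\text{G}_2$-irreducibles $\Lambda^1$ and $\Lambda^2_{14}$, vanishing of $H$ on all coassociative $4$-folds forces $\tau_1 = \tau_2 = 0$; hence $W_1 \oplus W_{27}$ is the maximal torsion class in $\mathcal{S}$ with the stated property.

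I expect the only real difficulty to be bookkeeping: propagating the $\sigma_\delta$ and $\gamma_\beta$ entries of the two $3\times 4$ blocks through the wedge products with the correct signs, and then matching the resulting $S$-combinations to the correct lines of~(\ref{eq:Coassoc-TorsRel}) and the $T$-combinations to~(\ref{eq:G2-TorSol-3}), so that the numerical constant $-24$ comes out unambiguously --- in particular so that the two absolute constants agree, which is precisely what makes the coefficients $-4$ and $\tfrac{\sqrt 6}{3}$ in the final formula consistent once~(\ref{eq:G2-NormalizedTors}) is applied. There is no conceptual obstacle beyond this, since all the needed $\text{SO}(4)$-representation theory (\S\ref{ssect:SO4reps}), the decomposition of the torsion functions (\S\ref{ssect:G2RefTors}), and the over-determined linear system relating the functions $S_{\delta\alpha}$ and $T_{\beta\alpha}$ --- the same system that simultaneously yields Theorem~\ref{thm:CoassObs} --- are all already assembled in the opening of \S\ref{ssect:MCCoassoc}.
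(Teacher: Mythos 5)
Your proposal is correct and follows exactly the paper's route: working on the adapted $\text{SO}(4)$-bundle $B$ with $\omega_{\mathsf{A}}=0$, computing $H_p\,\text{vol}_{\mathsf{C}}$ from the mixed $(p,\alpha)$-block of $\psi = \theta+2\gamma$ wedged against $\beta_\alpha = \ast_{\mathsf{C}}\omega^\alpha$, substituting the semibasic expansions of $\sigma$ and $\gamma$, feeding in (\ref{eq:Coassoc-TorsRel}) and (\ref{eq:G2-TorSol-3}), and then normalizing via (\ref{eq:G2-NormalizedTors}) to convert $H_p = -24(A_p - C_p)$ into the stated formula. The steps, the intermediate constant $-24$, and the final minimality discussion all match the paper's argument.
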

\begin{proof}
	Let $\beta_\alpha := \ast_{\mathsf{C}}(\omega^\alpha) \in \Omega^3(B)$ and $\text{vol}_{\mathsf{C}} = \omega^{4567}$. The mean curvature vector may be computed as follows:
	\begin{align}
	\begin{bmatrix} H^1 \\ H^2 \\ H^3 \end{bmatrix} \text{vol}_{\mathsf{C}} & = \begin{bmatrix}
	\psi_{14} & \psi_{15} & \psi_{16} & \psi_{17} \\
	\psi_{24} & \psi_{25} & \psi_{26} & \psi_{27} \\
	\psi_{34} & \psi_{35} & \psi_{36} & \psi_{37}
	\end{bmatrix} \wedge \begin{bmatrix} \beta_4 \\ \beta_5 \\ \beta_6 \\ \beta_7 
	\end{bmatrix} \notag \\
	& = \begin{bmatrix}
	-\sigma_4 - \sigma_7 & -\sigma_3 + \sigma_8 & \sigma_2 + \sigma_5 & \sigma_1 - \sigma_6 \\
	\sigma_1 & \sigma_2 & \sigma_3 & \sigma_4 \\ 
	\sigma_5 & \sigma_6 & \sigma_7 & \sigma_8
	\end{bmatrix}\! \wedge\! \begin{bmatrix} \beta_4 \\ \beta_5 \\ \beta_6 \\ \beta_7 \end{bmatrix} + \,2\!\begin{bmatrix}
	\gamma_5 & -\gamma_4 & \gamma_7 & -\gamma_6 \\
	\gamma_6 & -\gamma_7 & -\gamma_4 & \gamma_5 \\
	-\gamma_7 & -\gamma_6 & \gamma_5 & \gamma_4
	\end{bmatrix} \!\wedge\! \begin{bmatrix} \beta_4 \\ \beta_5 \\ \beta_6 \\ \beta_7 \end{bmatrix} \label{eq:Coassoc-MeanCurv}
	\end{align}
	To evaluate the first term in (\ref{eq:Coassoc-MeanCurv}), we substitute $\sigma_\delta = S_{\delta p}\omega^p$, followed by (\ref{eq:Coassoc-TorsRel}), and finally (\ref{eq:G2-TorSol-3}), to obtain:
	\begin{align*}
	\begin{bmatrix}
	-\sigma_4 - \sigma_7 & -\sigma_3 + \sigma_8 & \sigma_2 + \sigma_5 & \sigma_1 - \sigma_6 \\
	\sigma_1 & \sigma_2 & \sigma_3 & \sigma_4 \\ 
	\sigma_5 & \sigma_6 & \sigma_7 & \sigma_8
	\end{bmatrix}\! \wedge\! \begin{bmatrix} \beta_4 \\ \beta_5 \\ \beta_6 \\ \beta_7 \end{bmatrix} & = 4\begin{bmatrix} -(T_{45} - T_{54}) - (T_{67} - T_{76}) \\ (T_{57} - T_{75}) - (T_{46} - T_{64}) \\ (T_{47} - T_{74}) + (T_{56} - T_{65}) \end{bmatrix} \text{vol}_{\mathsf{C}} \\
	& = 16 \begin{bmatrix} -A_1 + C_1 \\ -A_2 + C_2 \\ -A_3 + C_3 \end{bmatrix} \text{vol}_{\mathsf{C}}
	\end{align*}
	Similarly, to evaluate the second term in (\ref{eq:Coassoc-MeanCurv}), we substitute $\gamma_\alpha = T_{\alpha p}\omega^p$ followed by (\ref{eq:G2-TorSol-3}) to obtain:
	\begin{align*}
	2\begin{bmatrix}
	\gamma_5 & -\gamma_4 & \gamma_7 & -\gamma_6 \\
	\gamma_6 & -\gamma_7 & -\gamma_4 & \gamma_5 \\
	-\gamma_7 & -\gamma_6 & \gamma_5 & \gamma_4
	\end{bmatrix} \!\wedge\! \begin{bmatrix} \beta_4 \\ \beta_5 \\ \beta_6 \\ \beta_7 \end{bmatrix} & =
	2\begin{bmatrix} -(T_{45} - T_{54}) - (T_{67} - T_{76}) \\ (T_{57} - T_{75}) - (T_{46} - T_{64}) \\ (T_{47} - T_{74}) + (T_{56} - T_{65}) \end{bmatrix} \text{vol}_{\mathsf{C}} \\
	& = 8 \begin{bmatrix} -A_1 + C_1 \\ -A_2 + C_2 \\ -A_3 + C_3 \end{bmatrix} \text{vol}_{\mathsf{C}}
	\end{align*}
	We conclude that
	$$H_p = -24 A_p + 24C_p$$
	and so (\ref{eq:G2-NormalizedTors}) yields
	$$H = -4 [(\tau_1)_{\mathsf{A}}]^\sharp + \frac{\sqrt{6}}{3} \left[(\tau_2)_{\mathsf{A}}\right]^\natural.$$
	\noindent In particular, the largest torsion class for which $H = 0$ for all coassociatives is the one for which $\tau_1 = \tau_2 = 0$, which is $W_1 \oplus W_{27}$.
\end{proof}

\section{Cayley $4$-Folds in $\text{Spin}(7)$-Structures} \label{sect:Cayley}


\subsection{Preliminaries}\label{ssect:Spin7prelim}

\indent \indent In this subsection, we define both the ambient spaces and the submanifolds of interest.  This subsection will also be used to fix notation and conventions.

\subsubsection{$\text{Spin}(7)$-Structures on Vector Spaces}\label{ssect:SpinSevVect}

\indent \indent Let $V = \mathbb{R}^8$ equipped with the standard inner product $\langle \cdot, \cdot \rangle$,  norm $\Vert \cdot \Vert$, and an orientation.  Let $\{e_1, \ldots, e_8\}$ denote the standard (orthonormal) basis of $V$, and let $\{e^1, \ldots, e^8\}$ denote the corresponding dual basis of $V^*$.  The \textit{Cayley $4$-form} is the alternating $4$-form $\Phi_0 \in \Lambda^4(V^*)$ defined by
\begin{equation*}
\Phi_0 = e^{1234} + \left(e^{12} + e^{34} \right)  \wedge \left(e^{56} + e^{78}\right) + \left( e^{13} - e^{24} \right) \wedge \left(e^{57} - e^{68}\right) +\left( -e^{14}-e^{23} \right) \wedge \left(e^{58} + e^{67}\right) + e^{5678}.
\end{equation*}

For calculations, it will be convenient to express $\Phi_0$ in the form
\begin{align*}
\Phi_0 = \tfrac{1}{24} \Phi_{ijkl} e^{ijkl},
\end{align*}
where the constants $\Phi_{ijkl} \in \left\lbrace -1, 0, 1 \right\rbrace$ are defined by this formula and skew-symmetry.

The stabilizer of $\Phi_0$ under the pullback action of $GL(V)$ is isomorphic to $\spsev,$
\begin{align}\label{eq:Sp7def}
\spsev \cong \{A \in \text{GL}(V) \colon A^*\Phi_0 = \Phi_0\}.
\end{align}
Using the isomorphism (\ref{eq:Sp7def}), we regard $\spsev$ as a subgroup of $GL(V).$ As a $\spsev$-module, $V$ is isomorphic to the spinor representation of $\spsev$, and is thus irreducible. The induced representation on $\Lambda^k (V^*)$ for $2 \leq k \leq 6$ is reducible, and there are irreducible decompositions \cite{Bry87}
\begin{align*}
\Lambda^2 (V^*) & \cong \Lambda^2_{7} \oplus \Lambda^2_{21}, \\
\Lambda^3 (V^*) & \cong \Lambda^3_{8} \oplus \Lambda^3_{48}, \\
\Lambda^4 (V^*) & \cong \Lambda^4_1 \oplus \Lambda^4_7 \oplus \Lambda^4_{27} \oplus \Lambda^4_{35}.
\end{align*}
We will need the definitions of the irreducible pieces of $\Lambda^2(V^*)$ and $\Lambda^3(V^*),$ they are
\begin{align*}
\Lambda^2_7 &= \left\lbrace \beta \in \Lambda^2 (V^*) \mid * \left( \Phi_0 \wedge \beta \right) = 3 \beta \right\rbrace, \\
\Lambda^2_{21} &= \left\lbrace \beta \in \Lambda^2 (V^*) \mid * \left( \Phi_0 \wedge \beta \right) = - \beta \right\rbrace \cong \mathfrak{spin}(7), \\
\Lambda^3_8 &= \left\lbrace * \left( \Phi_0 \wedge \alpha \right) \mid \alpha \in V^* \right\rbrace, \\
\Lambda^3_{48} &= \left\lbrace \gamma \in \Lambda^3 (V^*) \mid \Phi_0 \wedge \gamma  = 0 \right\rbrace .
\end{align*}
The irreducible decompositions of the spaces $\Lambda^5 (V^*)$ and $\Lambda^6 (V^*)$ can be obtained by applying the Hodge star operator to the decompositions of $\Lambda^3(V^*)$ and $\Lambda^2(V^*)$ respectively.

\subsubsection{$\text{Spin}(7)$-Structures on $8$-Manifolds}

\indent \indent Let $M$ be an oriented 8-manifold. A $\spsev$-structure on $M$ is a differential 4-form $\Phi \in \Omega^4 \left( M \right)$ such that at each $x \in M,$ and after identifying $T_x M$ with $V = \R^8,$ the 4-form $\Phi |_x \in \Lambda^4 \left( T^*_x M \right)$ lies in the $GL(V)$-orbit of $\Phi_0.$  That is, at each $x \in M$ there exists a coframe $u : T_x M \to \R^8$ for which $\Phi |_x = u^* \left( \Phi_0 \right).$ \\

The first order local invariants of a $\spsev$-structure are completely encoded in a $\spsev$-equivariant function
\begin{align*}
T : F_{\spsev} \to \Lambda^1 \oplus \Lambda^3_{48}
\end{align*}
called the \emph{intrinsic torsion function}, defined on the total space of the $\spsev$-frame bundle $F_{\spsev} \to M$ over $M.$

By a result of Fern\'{a}ndez \cite{Fernan86}, the intrinsic torsion function of a $\spsev$-structure is equivalent to the data of the 5-form $d \Phi.$ It follows from the decomposition of $\Lambda^5 ( V^*)$ described in the previous section that the exterior derivative of $\Phi$ takes the form
\begin{align}
d \Phi = \tau_1 \wedge \Phi + * \tau_3,
\end{align}
where $\left( \tau_1, \tau_3 \right) \in \Gamma \left( \Lambda^1 ( T^* M ) \oplus \Lambda^3_{48} ( T^* M ) \right)$. We refer to the forms $\tau_1$ and $\tau_3$ as the \emph{torsion forms} of the $\spsev$-structure.

\subsubsection{Cayley $4$-Folds}

\indent \indent Let $ \left( M, \Phi \right) $ be an 8-manifold with a $\spsev$-structure , and consider a tangent space $ \left( T_x M , \Phi |_x \right) \cong \left( V, \Phi_0 \right).$ The vector space $ \left( V, \Phi_0 \right)$ possesses a distinguished class of subspaces --- the Cayley 4-planes --- first studied by Harvey and Lawson \cite{harvey1982calibrated} in their work on calibrations.

\begin{prop}[\cite{harvey1982calibrated}]
 The Cayley 4-form $\Phi_0$ has co-mass one, meaning that
\begin{align*}
| \Phi_0 \left( x, y, z, w \right) | \leq 1
\end{align*}
for every orthonormal set $ \left\lbrace x, y, z, w \right\rbrace$ in $V \cong \R^7.$
\end{prop}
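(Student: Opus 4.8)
The plan is to deduce the comass bound from the multiplicativity of the octonionic norm via a Cauchy--Schwarz estimate, exactly as one proves Proposition~\ref{prop:G2Comass}.

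First I would recall the octonionic description of the Cayley form. Identify $V \cong \R^8 \cong \mathbb{O}$ with $e_1 = 1 \in \mathrm{Re}(\mathbb{O})$ and introduce the triple cross product
\begin{align*}
P \colon \mathbb{O} \times \mathbb{O} \times \mathbb{O} \longrightarrow \mathbb{O}, \qquad P(a,b,c) = \tfrac{1}{2}\bigl( a(\overline{b}c) - c(\overline{b}a) \bigr).
\end{align*}
The claim is that $\Phi_0(x,y,z,w) = \langle x,\, P(y,z,w) \rangle$ for all $x,y,z,w \in V$. To prove this I would first verify, using the standard identities for octonion multiplication and conjugation (in particular the alternative law), that $(x,y,z,w) \mapsto \langle x,\, P(y,z,w)\rangle$ is totally alternating, and then check that it has the same components as $\Phi_0$ on the standard basis --- where the nonzero components reduce to the value $1$ of the associative $3$-form on an associative triple. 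Alternatively, this identity may simply be quoted from \cite{harvey1982calibrated} or \cite{salamon2010notes}.

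Granting the identity, the estimate is two lines. Since the octonionic norm is multiplicative, the triangle inequality gives $\| P(y,z,w)\| \le \tfrac{1}{2}\bigl(\|y(\overline{z}w)\| + \|w(\overline{z}y)\|\bigr) = \|y\|\,\|z\|\,\|w\|$, and hence by Cauchy--Schwarz
\begin{align*}
\bigl| \Phi_0(x,y,z,w) \bigr| = \bigl| \langle x,\, P(y,z,w)\rangle \bigr| \;\le\; \|x\|\,\|P(y,z,w)\| \;\le\; \|x\|\,\|y\|\,\|z\|\,\|w\|,
\end{align*}
which equals $1$ whenever $\{x,y,z,w\}$ is an orthonormal set.

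The only real content is the identity $\Phi_0 = \langle\,\cdot\,,\, P(\cdot,\cdot,\cdot)\rangle$, and the step I expect to be the main obstacle is verifying that its right-hand side is totally alternating, since this is invisible from the formula for $P$ and genuinely uses the nonassociativity of $\mathbb{O}$. If one prefers an argument free of octonions, one can instead maximize the continuous function $\Pi \mapsto \Phi_0|_{\Pi}$ over the compact oriented Grassmannian $\widetilde{\mathrm{Gr}}_4(\R^8)$: the first-variation equations at a maximizing plane $\Pi_0 = \mathrm{span}(f_1,f_2,f_3,f_4)$ force $\Phi_0(v,f_2,f_3,f_4) = \Phi_0(f_1,v,f_3,f_4) = \Phi_0(f_1,f_2,v,f_4) = \Phi_0(f_1,f_2,f_3,v) = 0$ for every $v \perp \Pi_0$, and a linear-algebra analysis of the resulting critical planes shows that the maximal value equals $\Phi_0(e_1,e_2,e_3,e_4) = 1$; in that approach the main obstacle is the classification of critical planes.
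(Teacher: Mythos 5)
Your octonionic argument is essentially the one Harvey--Lawson give in \cite{harvey1982calibrated}; the paper itself offers no proof, only the citation. You correctly identify the nontrivial point --- the total alternation of $\langle x, P(y,z,w)\rangle$, which Harvey--Lawson establish via the alternative laws and polarization of the multiplicativity $\Vert ab\Vert = \Vert a\Vert\,\Vert b\Vert$ --- and once that identity is in hand, your triangle-inequality plus Cauchy--Schwarz estimate closes the comass bound exactly as in their argument.
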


\begin{prop}[\cite{harvey1982calibrated}]
Let $K \in \text{Gr}_4 ( V )$ be a 4-plane in $V$. The following are equivalent
\begin{enumerate}[nosep]
\item If $\left\lbrace x, y, z, w \right\rbrace$ is an orthonormal basis of $K,$ then $\Phi_0 \left( x, y, z, w \right) = \pm 1$.
\item $\text{Im} \left( x \times y \times z \times w \right) = 0$ for any basis $x, y, z, w$ of $K.$ 
\item $y \times z \times w \in K$ for all $y, z, w \in K.$
\end{enumerate}
\end{prop}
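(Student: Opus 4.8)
The plan is to reduce everything to the octonion algebra, in the spirit of the octonionic formulas for $\phi_0$ and $*\phi_0$ recalled earlier. Fix a linear isometry $V \cong \mathbb{O}$ carrying $\Phi_0$ to the Cayley calibration, and recall from \cite{harvey1982calibrated} two cross products on $\mathbb{O}$. The \emph{triple cross product} $P\colon \mathbb{O}^3 \to \mathbb{O}$, $P(x,y,z) = \tfrac{1}{2}\bigl(x(\bar y z) - z(\bar y x)\bigr)$, is alternating, satisfies $\Phi_0(x,y,z,w) = \langle P(x,y,z),w\rangle$, is orthogonal to each of $x,y,z$, and has $|P(x,y,z)|$ equal to the $3$-volume of the parallelepiped spanned by $x,y,z$; in particular $P$ carries orthonormal triples to unit vectors. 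The \emph{fourfold cross product} $Q(x,y,z,w) = x\times y\times z\times w$ (the product appearing in condition $(2)$) is $4$-linear and alternating, has $\mathrm{Re}\,Q(x,y,z,w) = \Phi_0(x,y,z,w)$, and satisfies the norm identity $|Q(x,y,z,w)|^2 = \|x\wedge y\wedge z\wedge w\|^2$, equivalently $\Phi_0(x,y,z,w)^2 + |\mathrm{Im}\,Q(x,y,z,w)|^2 = \|x\wedge y\wedge z\wedge w\|^2$; so $|Q| = 1$ on orthonormal $4$-frames. First I would record these identities (all in \cite{harvey1982calibrated}), fixing the sign and conjugation conventions once and for all.

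Next I would establish $(1)\Leftrightarrow(3)$. For $(3)\Rightarrow(1)$: given an orthonormal basis $\{x,y,z,w\}$ of $K$, the vector $u := P(y,z,w)$ lies in $K$ by $(3)$, is a unit vector, and is orthogonal to $\mathrm{span}(y,z,w)$; hence $u$ spans the line $K\cap\mathrm{span}(y,z,w)^\perp = \mathbb{R}x$, so $u = \pm x$ and $\Phi_0(x,y,z,w) = \pm\langle x, P(y,z,w)\rangle = \pm 1$. For $(1)\Rightarrow(3)$: condition $(1)$ says $\Phi_0|_K = \pm\,\mathrm{vol}_K$, so for any $a,b,c\in K$ and every $d\in K$ we have $\langle \pi_K P(a,b,c), d\rangle = \Phi_0(a,b,c,d) = \pm\,\mathrm{vol}_K(a,b,c,d)$; thus $\pi_K P(a,b,c) = \pm\star_K(a\wedge b\wedge c)$, whose norm equals the $3$-volume of $a,b,c$, which is also $|P(a,b,c)|$. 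Comparing norms forces $\pi_{K^\perp}P(a,b,c) = 0$, i.e. $P(a,b,c)\in K$. (One could instead apply the equality case of the Cauchy--Schwarz inequality to the four ``leave-one-out'' triple cross products of a basis and note that these span $\Lambda^3 K$.)

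Finally, $(1)\Leftrightarrow(2)$: since $Q$ is $4$-linear alternating, $\mathrm{Im}\,Q$ restricts on $K$ to an $\mathrm{Im}\,\mathbb{O}$-valued $4$-form, so $(2)$ holds for one basis of $K$ iff it holds for every basis iff $\mathrm{Im}\,Q|_{\Lambda^4 K}\equiv 0$; it therefore suffices to test $(2)$ on an orthonormal basis, where the norm identity reads $\Phi_0(x,y,z,w)^2 + |\mathrm{Im}\,Q(x,y,z,w)|^2 = 1$, whence $\mathrm{Im}\,Q(x,y,z,w) = 0 \iff |\Phi_0(x,y,z,w)| = 1$. This yields $(2)\Leftrightarrow(1)$ and closes the cycle; the same norm identity re-derives the comass-one statement of the preceding proposition. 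I expect the main obstacle to be organizational rather than conceptual: marshalling the octonion identities with consistent sign conventions, and handling the mild basis-ambiguity in $(1)$ — an orthonormal basis of $K$ is determined only up to $\mathrm{O}(4)$, under which $\Phi_0(x,y,z,w)$ changes by $\det = \pm 1$, so the property ``$=\pm 1$'' depends only on $K$.
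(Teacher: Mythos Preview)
Your argument is correct and is essentially the Harvey--Lawson proof. Note, however, that the paper does not actually prove this proposition: it is stated with a citation to \cite{harvey1982calibrated} and no proof is given, so there is nothing in the paper to compare your approach against beyond observing that you have reproduced the standard octonionic argument from the cited source.
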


The $\spsev$-action on $V$ induces $\spsev$-actions on the Grassmannians $\text{Gr}_k(V)$ of $k$-planes in $V$. These actions are transitive for $k=1,2,3,5,6,7,$ but the action is not transitive when $k=4.$ Indeed, the (proper) subset consisting of Cayley 4-planes is a $\spsev$-orbit.

\begin{prop}\label{prop:CayAlg}
The Lie group $\spsev$ acts transitively on the subset of Cayley 4-planes
\begin{align*}
\left\lbrace E \in \text{Gr}_4 ( V ) \colon \left|\Phi_0 \left( E \right)\right| = 1 \right\rbrace \subset \text{Gr}_4 ( V ) 
\end{align*}
with stabiliser isomorphic to $SU(2) \times SU(2) \times SU(2) / \left\lbrace \pm \text{Id} \right\rbrace.$
\end{prop}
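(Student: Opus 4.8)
The plan is to prove transitivity by reducing to the $\text{G}_2$ case via the octonion model, and then to identify the stabiliser by a Lie-algebra computation followed by a connectedness argument.

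\textbf{Transitivity.} Identify $V\cong\mathbb{O}$ so that $e_1=1$ is the octonionic unit and $\Phi_0(e_1,x,y,z)=\phi_0(x,y,z)$ for $x,y,z\in\text{Im}(\mathbb{O})=\langle e_2,\dots,e_8\rangle$, where $\phi_0$ is the associative $3$-form of \S\ref{sssect:G2structs}; this compatibility is a direct check from the coordinate expressions (one has $\iota_{e_1}\Phi_0=\phi_0$ after relabelling). Recall $\text{G}_2=\{g\in\spsev: g(1)=1\}$ and that $\spsev$ acts transitively on $S^7\subset V$. Now let $K$ be any Cayley $4$-plane, pick a unit vector $u\in K$, and choose $g\in\spsev$ with $g(u)=1$; then $g(K)$ is Cayley and contains $1$, so (since $\dim(g(K)\cap\text{Im}(\mathbb{O}))=3$) we may write $g(K)=\mathbb{R}\,1\oplus A$ with $A:=g(K)\cap\text{Im}(\mathbb{O})$. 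The Cayley condition, evaluated on an orthonormal basis $\{1,x,y,z\}$, reads $|\phi_0(x,y,z)|=1$, i.e.\ $A$ is an associative $3$-plane (Proposition~\ref{prop:AssocDefn}). By Proposition~\ref{prop:G2ActStab} there is $h\in\text{G}_2$ with $h(A)=A_0:=\langle e_2,e_3,e_4\rangle$, and since $h(1)=1$ we get $(h\circ g)(K)=\mathbb{R}\,1\oplus A_0=K_0:=\langle e_1,\dots,e_4\rangle$, the standard Cayley plane ($\Phi_0(e_1,\dots,e_4)=1$). This proves transitivity.

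\textbf{The Lie algebra of $H:=\text{Stab}_{\spsev}(K_0)$.} Since $\Phi_0|_{K_0}$ and $\Phi_0|_{K_0^\perp}$ are the unit volume forms of $K_0$ and $K_0^\perp$ (the $e^{1234}$ and $e^{5678}$ terms), every $g\in H$ preserves the metric, the splitting $V=K_0\oplus K_0^\perp$, and the orientation of each summand; hence $H\subseteq\text{SO}(K_0)\times\text{SO}(K_0^\perp)\cong\text{SO}(4)\times\text{SO}(4)$, and under the standard identification $\mathfrak{so}(8)\cong\Lambda^2(V^*)$, $\mathfrak{spin}(7)\cong\Lambda^2_{21}$ (see \S\ref{ssect:SpinSevVect}),
$$\mathfrak{h}=\mathfrak{spin}(7)\cap\bigl(\mathfrak{so}(K_0)\oplus\mathfrak{so}(K_0^\perp)\bigr)=\Lambda^2_{21}\cap\bigl(\Lambda^2(K_0)\oplus\Lambda^2(K_0^\perp)\bigr).$$
I would then compute the operator $\beta\mapsto\ast(\Phi_0\wedge\beta)$ on the $12$-dimensional space $\Lambda^2(K_0)\oplus\Lambda^2(K_0^\perp)$: it acts as $-1$ on each anti-self-dual part $\Lambda^2_-(K_0)$, $\Lambda^2_-(K_0^\perp)$, while on $\Lambda^2_+(K_0)\oplus\Lambda^2_+(K_0^\perp)$ the terms of $\Phi_0$ pair the two factors so that (in a suitably matched basis) the operator has eigenvalue $3$ on the diagonal (which lies in $\Lambda^2_7$) and eigenvalue $-1$ on the anti-diagonal $\Delta:=\{\xi-\psi(\xi):\xi\in\Lambda^2_+(K_0)\}$, where $\psi:\Lambda^2_+(K_0)\xrightarrow{\sim}\Lambda^2_+(K_0^\perp)$ is the matching isomorphism. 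Hence $\mathfrak{h}=\Lambda^2_-(K_0)\oplus\Lambda^2_-(K_0^\perp)\oplus\Delta$, a sum of three mutually commuting ideals each isomorphic to $\mathfrak{su}(2)$; in particular $\dim H=9$ and $\mathfrak{h}\cong\mathfrak{su}(2)^{\oplus3}$.

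\textbf{Global structure and connectedness.} Thus $H^\circ$ is a connected quotient of $\text{SU}(2)\times\text{SU}(2)\times\text{SU}(2)$ by a finite central subgroup. To determine which, I would track the composite $\text{SU}(2)^3\to H^\circ\hookrightarrow\text{SO}(K_0)\times\text{SO}(K_0^\perp)$: the factor corresponding to $\Delta$ maps diagonally into the two self-dual $\text{SU}(2)$'s, and the factors corresponding to $\Lambda^2_-(K_0)$, $\Lambda^2_-(K_0^\perp)$ map onto the respective anti-self-dual $\text{SU}(2)$'s; composing with the covering $\text{SU}(2)^4\to\text{SO}(4)\times\text{SO}(4)$ (kernel $\{(\epsilon_1,\epsilon_1,\epsilon_2,\epsilon_2):\epsilon_i=\pm1\}$), one finds the total kernel is exactly $\{\pm(\text{Id},\text{Id},\text{Id})\}$, so $H^\circ\cong(\text{SU}(2)\times\text{SU}(2)\times\text{SU}(2))/\{\pm\text{Id}\}$. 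Finally, $H=H^\circ$: indeed $\text{Stab}_H(1)=\{h\in\text{G}_2: h(A_0)=A_0\}\cong\text{SO}(4)$ is connected (Proposition~\ref{prop:G2ActStab}), so by dimension the orbit $H\cdot1$ is a $3$-dimensional closed submanifold of $S^3\subset K_0$, hence all of it, giving a fibration $\text{SO}(4)\to H\to S^3$ whose homotopy exact sequence forces $\pi_0(H)=0$. This yields $H\cong(\text{SU}(2)\times\text{SU}(2)\times\text{SU}(2))/\{\pm\text{Id}\}$, as claimed.

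\textbf{Main obstacle.} The delicate point is the bookkeeping at the $\text{Spin}(4)$ level: not merely that $\mathfrak{h}\cong\mathfrak{su}(2)^3$, but the precise way the three $\text{SU}(2)$ factors sit inside $\text{SO}(4)\times\text{SO}(4)$ and intersect the kernel of $\text{Spin}(4)\times\text{Spin}(4)\to\text{SO}(4)\times\text{SO}(4)$, which is what pins down the quotient by $\{\pm\text{Id}\}$ rather than a larger or smaller central subgroup. An alternative that sidesteps the connectedness argument is to work in the Cayley--Dickson model $V=\mathbb{H}\oplus\mathbb{H}\ell$ with $K_0=\mathbb{H}$ and verify that the maps $(q_1,q_2\ell)\mapsto(aq_1\bar b,\,(aq_2\bar c)\ell)$ for $(a,b,c)\in\text{SU}(2)^3$ preserve $\Phi_0$ and exhaust $H$; this makes $H$ manifestly connected and the $\mathbb{Z}_2$ transparent, at the cost of checking directly that these maps lie in $\spsev$ and account for all of $H$.
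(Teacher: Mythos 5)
Your proof is correct and complete; the paper itself does not prove this proposition but cites it to Harvey and Lawson \cite{harvey1982calibrated}, so you have in effect supplied the omitted argument. A few remarks on the details. In the transitivity step, the compatibility $\iota_{e_1}\Phi_0 = \phi_0$ (under the shift $(e_2,\dots,e_8)\leftrightarrow(e_1,\dots,e_7)$) does check against the paper's coordinate formulas for $\Phi_0$ and $\phi_0$, so the reduction to Proposition~\ref{prop:G2ActStab} via $\text{Spin}(7)/\text{G}_2\cong S^7$ is clean. In the Lie-algebra computation your eigenvalue claims are right: for $\beta\in\Lambda^2_-(K_0)$ all cross terms of $\Phi_0$ annihilate $\beta$ (a self-dual form in $K_0$ wedged with an anti-self-dual one vanishes), so $\ast(\Phi_0\wedge\beta)=\ast(e^{5678}\wedge\beta)=\ast_{K_0}\beta=-\beta$; and for $\xi\in\Lambda^2_+(K_0)$ paired with $\psi(\xi)\in\Lambda^2_+(K_0^\perp)$ one gets the matrix $\left(\begin{smallmatrix}1&2\\2&1\end{smallmatrix}\right)$ on $\text{span}(\xi,\psi(\xi))$, hence eigenvalues $3$ and $-1$ with eigenvectors $\xi+\psi(\xi)$ and $\xi-\psi(\xi)$. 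One point worth making explicit: the assertion that $\Delta=\{\xi-\psi(\xi)\}$ is a Lie subalgebra (equivalently, that $\psi$ is a Lie-algebra anti-isomorphism) needs no separate verification, because $\mathfrak{h}$ is automatically a subalgebra (intersection of $\mathfrak{spin}(7)$ with $\mathfrak{so}(K_0)\oplus\mathfrak{so}(K_0^\perp)$), and the bracket of two elements of $\Delta$ lands in $\mathfrak{h}\cap(\Lambda^2_+(K_0)\oplus\Lambda^2_+(K_0^\perp))=\Delta$. The covering-kernel bookkeeping and the connectedness argument via the fibration $\text{SO}(4)\to H\to S^3$ are both sound. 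Your $\mathbb{H}\oplus\mathbb{H}\ell$ alternative is essentially the route Harvey and Lawson take; the intrinsic argument you wrote out trades that explicit model for a more delicate tracking of central $\mathbb{Z}_2$'s, which you have handled correctly.
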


\noindent \textbf{Notation:} We will denote the group $SU(2)^3 / \left\lbrace \pm \text{Id} \right\rbrace$ by $\sph.$


\subsection{Some $\text{Spin}^h(4)$-Representation Theory}\label{ssect:sphreps}

\indent \indent The Lie group $\sph$ is double-covered by the simply-connected group $\text{SU}(2)^3.$  The complex irreducible representations of $\text{SU}(2)^3$ are exactly the tensor products $\V_p \otimes \V_q \otimes \V_r$ of irreducible $\text{SU}(2)$-representations for each factor. The complex irreducible representations of $\text{SU}(2)$ are well known to be the spaces of homogeneous polynomials in two variables of fixed degree, $\V_p = \text{Sym}^p \left(\mathbb{C}\langle x,y\rangle \right)$. \\
\indent Let $\V^{\mathbb{C}}_{p,q,r}$ denote $\V_p \otimes \V_q \otimes \V_r$.  We think of $\V^{\mathbb{C}}_{p,q,r}$ as the space of homogeneous polynomials in $(u,v;x,y;w,z)$ of tridegree $(p,q,r)$.  When $p+q+r$ is even the representation $\V^{\mathbb{C}}_{p,q,r}$ descends to a representation of $\sph$, and each of these representations has a real structure induced by the map $(u,v,x,y,w,z) \mapsto (v,-u,y,-x,z,-w).$ This yields a complete description of the real representations of $\sph$. We work with real representations, letting $\V_{p,q,r}$ denote the real representation underlying  $\V^{\mathbb{C}}_{p,q,r}$. 

The Clebsch-Gordan formula applied to each $\text{SU}(2)$ representation gives the irreducible decomposition of a tensor product of $\sph$-modules:
\begin{align*}
\V_{p_1,q_1,r_1} \otimes \V_{p_2,q_2,r_2} \cong \bigoplus_{i = 0}^{|p_1-p_2|} \bigoplus_{j=0}^{|q_1 - q_2|} \bigoplus_{k=0}^{|r_1 - r_2|} \V_{p_1+p_2-2i, \,q_1+q_2-2j, \, r_1+r_2-2k}.
\end{align*}


\subsubsection{$\text{Spin}^h(4)$ as a subgroup of $\text{Spin}(7)$}

\indent \indent In our calculations we shall need a concrete realization of $\sph$ as the stabilizer of a Cayley plane. Let $\sph$ act on $V \cong \R^8$ via the identification $V \cong \V_{1,1,0} \oplus \V_{0,1,1}.$ Define a basis $\left( e_1, \dotsc, e_8 \right)$ of $V$ by
\begin{align*}
e_1 & = i \left( -ux + vy \right), & e_2 & = ux+vy, & e_3 &= -i \left( uy+vx \right), & e_4 & = -uy + vx, \\
e_5 & = i \left( xw - yz \right), & e_6 & = xw+yz, & e_7 & = -i \left( xz+yw \right), & e_8 & = -xz + yw.
\end{align*}
Then the 4-form
\begin{equation*}
e^{1234} + \left(e^{12} + e^{34} \right)  \wedge \left(e^{56} + e^{78}\right) + \left( e^{13} - e^{24} \right) \wedge \left(e^{57} - e^{68}\right) +\left( -e^{14}-e^{23} \right) \wedge \left(e^{58} + e^{67}\right) + e^{5678}
\end{equation*}
is $\sph$-invariant, and thus the action of $\sph$ on $V$ gives an embedding $\sph \subset \spsev.$ The 4-plane $\left\langle e_1, e_2, e_3, e_4 \right\rangle$ is Cayley and is preserved by the action of $\sph.$ 

\subsubsection{Decomposition of 1-forms on $V$}

\indent \indent Let $V$ be as above. We have
\begin{align*}
\Lambda^1 \left( V^* \right) = \mathsf{K} \oplus \mathsf{L},
\end{align*}
where
\begin{align*}
\K & = \left\langle e_1, e_2, e_3, e_4 \right\rangle\!, \\
\Ls & = \left\langle e_5, e_6, e_7, e_8 \right\rangle\!.
\end{align*}
As abstract $\sph$-modules, $\mathsf{K} \cong \mathsf{V}_{1,1,0}$ and $\mathsf{L} \cong \mathsf{V}_{0,1,1}.$

\subsubsection{Decomposition of 2-forms on $V$}

\indent \indent We now seek to decompose $\Lambda^2 \left( V^* \right)$ into $\sph$-irreducible submodules. As noted in \S\ref{ssect:Spin7prelim} above, $\Lambda^2 \left( V^* \right)$ splits into $\text{Spin}(7)$-irreducible submodules as
\begin{align}\label{eq:Lam2Sp7}
\Lambda^2 \left( V^* \right) \cong \Lambda^2_{7} \oplus \Lambda^2_{21}.
\end{align}
On the other hand, using $V^* \cong \K \oplus \Ls,$ we may also decompose $\Lambda^2 \left( V^* \right)$ as
\begin{align}\label{eq:Lam2Cay}
\Lambda^2 \left( V^* \right) \cong \Lambda^2_+ \left( \K \right) \oplus \Lambda^2_- \left( \K \right) \oplus \left( \K \otimes \Ls \right) \oplus  \Lambda^2_+ \left( \Ls \right) \oplus \Lambda^2_- \left( \Ls \right).
\end{align}
We will refine both decompositions (\ref{eq:Lam2Sp7}) and (\ref{eq:Lam2Cay}) into irreducible $\sph$-modules.

To begin, note first that as $\sph$-modules, we have that $\Lambda^2_- \left( \K \right) \cong \V_{2,0,0},$ $\Lambda^2_- \left( \Ls \right) \cong \V_{0,0,2},$ and $\Lambda^2_+ \left( \K \right) \cong \Lambda^2_+ \left( \Ls \right) \cong \V_{0,2,0}$ are irreducible. Thus, it remains only to decompose $\Lambda^2_7, \Lambda^2_{21},$ and $\K \otimes \Ls$.

\begin{defn}
We define
\begin{align*}
(\Lambda^2_7)_{0,2,0} & := \Lambda^2_7 \cap \left( \Lambda^2_+ \left( \K \right) \oplus \Lambda^2_+ \left( \Ls \right) \right), \\
(\Lambda^2_{7})_{1,0,1} & :=  \Lambda^2_7 \cap \left( \K \otimes \Ls \right), \\
\\
(\Lambda^2_{21})_{2,0,0} & := \Lambda^2_{21} \cap  \Lambda^2_- \left( \K \right),  \\
(\Lambda^2_{21})_{0,0,2} & := \Lambda^2_{21} \cap  \Lambda^2_- \left( \Ls \right),  \\
(\Lambda^2_{21})_{0,2,0} & := \Lambda^2_{21} \cap \left( \Lambda^2_+ \left( \K \right) \oplus \Lambda^2_+ \left( \Ls \right) \right), \\
(\Lambda^2_{21})_{1,2,1} & := \Lambda^2_{21} \cap ( \K \otimes \Ls ).
\end{align*}
\end{defn}
\noindent The reader can check that, in fact, $\Lambda^2_{-}(\mathsf{K}) \subset \Lambda^2_{21}$ and $\Lambda^2_-(\Ls) \subset \Lambda^2_{21}$, so that $(\Lambda^2_{21})_{2,0,0} = \Lambda^2_-(\mathsf{K})$ and $\left( \Lambda^2_{21}\right) _{0,0,2} = \Lambda^2_- \left( \Ls \right)$.

\begin{lem}
	The decompositions
	\begin{align*}
	\Lambda^2_7 & = (\Lambda^2_7)_{0,2,0} \oplus (\Lambda^2_7)_{1,0,1} \\
	\Lambda^2_{21} & = \Lambda^2_-(\mathsf{K}) \oplus (\Lambda^2_{21})_{0,2,0} \oplus \Lambda^2_- \left( \Ls \right) \oplus (\Lambda^2_{21})_{1,2,1}
	\end{align*}
	consist of $\sph$-irreducible submodules.
\end{lem}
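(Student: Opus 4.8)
The plan is to follow the proof of Lemma~\ref{lem:G22FormDecomp} essentially verbatim: identify the $\sph$-isotypic types of the modules appearing in (\ref{eq:Lam2Sp7}) and (\ref{eq:Lam2Cay}), and then extract the two claimed decompositions by Schur's lemma together with a dimension count. First I would pin down the $\sph$-module structure of the five summands of (\ref{eq:Lam2Cay}). The spaces $\Lambda^2_\pm(\K)$ and $\Lambda^2_\pm(\Ls)$ are irreducible --- this is the classical fact that self-dual and anti-self-dual $2$-forms on $\R^4$ realize the two three-dimensional irreducible $\text{SO}(4)$-modules --- of the types recorded just before the Definition, namely $\Lambda^2_-(\K)\cong\V_{2,0,0}$, $\Lambda^2_-(\Ls)\cong\V_{0,0,2}$, and $\Lambda^2_+(\K)\cong\Lambda^2_+(\Ls)\cong\V_{0,2,0}$. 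Applying Clebsch--Gordan to the middle $\text{SU}(2)$-factor gives $\K\otimes\Ls\cong\V_{1,1,0}\otimes\V_{0,1,1}\cong\V_{1,2,1}\oplus\V_{1,0,1}$. Hence, as an $\sph$-module, $\Lambda^2(V^*)\cong\V_{0,2,0}^{\oplus2}\oplus\V_{2,0,0}\oplus\V_{0,0,2}\oplus\V_{1,2,1}\oplus\V_{1,0,1}$, of total dimension $6+3+3+12+4=28$; in particular $\V_{0,2,0}$ occurs with multiplicity two and each of $\V_{2,0,0},\V_{0,0,2},\V_{1,2,1},\V_{1,0,1}$ with multiplicity one.

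Next I would determine the types of $\Lambda^2_7$ and $\Lambda^2_{21}$. A one-line check (the one flagged right after the Definition) shows $\ast(\Phi_0\wedge\beta)=-\beta$ for every $\beta\in\Lambda^2_-(\K)\oplus\Lambda^2_-(\Ls)$, so the unique copies of $\V_{2,0,0}$ and $\V_{0,0,2}$ in $\Lambda^2(V^*)$ lie inside $\Lambda^2_{21}$. Since $\Lambda^2_7$ is $7$-dimensional it can therefore contain neither $\V_{2,0,0}$ nor $\V_{0,0,2}$ (just accounted for) nor $\V_{1,2,1}$ (which is $12$-dimensional), so the only possibility compatible with the multiplicities above is $\Lambda^2_7\cong\V_{0,2,0}\oplus\V_{1,0,1}$; taking orthogonal complements in $\Lambda^2(V^*)$ then forces $\Lambda^2_{21}\cong\V_{2,0,0}\oplus\V_{0,0,2}\oplus\V_{0,2,0}\oplus\V_{1,2,1}$. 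Alternatively, one may read these types off the branching of $\mathfrak{spin}(7)\cong\Lambda^2_{21}$ to its subalgebra $\mathfrak{su}(2)^{\oplus3}=\text{Lie}(\sph)$, whose adjoint action already realizes $\V_{2,0,0}\oplus\V_{0,2,0}\oplus\V_{0,0,2}$ inside $\Lambda^2_{21}$, with $12$-dimensional complement necessarily $\cong\V_{1,2,1}$.

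With the types in hand the lemma follows from Schur's lemma. We have already shown $\Lambda^2_-(\K),\Lambda^2_-(\Ls)\subset\Lambda^2_{21}$. Because $\K\otimes\Ls\cong\V_{1,2,1}\oplus\V_{1,0,1}$ while $\Lambda^2_7$ contains a copy of $\V_{1,0,1}$ but not of $\V_{1,2,1}$, and $\Lambda^2_{21}$ the reverse, Schur's lemma identifies $(\Lambda^2_7)_{1,0,1}=\Lambda^2_7\cap(\K\otimes\Ls)$ with the $\V_{1,0,1}$-summand and $(\Lambda^2_{21})_{1,2,1}=\Lambda^2_{21}\cap(\K\otimes\Ls)$ with the $\V_{1,2,1}$-summand, so both are irreducible. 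Similarly $\Lambda^2_+(\K)\oplus\Lambda^2_+(\Ls)\cong\V_{0,2,0}^{\oplus2}$, and since each of $\Lambda^2_7$ and $\Lambda^2_{21}$ contains exactly one copy of $\V_{0,2,0}$, the intersections $(\Lambda^2_7)_{0,2,0}$ and $(\Lambda^2_{21})_{0,2,0}$ are each an irreducible $\V_{0,2,0}$. Finally the dimension counts $3+4=7=\dim\Lambda^2_7$ and $3+3+3+12=21=\dim\Lambda^2_{21}$ show that the summands listed in the statement exhaust $\Lambda^2_7$ and $\Lambda^2_{21}$, which is the assertion.

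I do not expect a real obstacle: the lemma is essentially representation-theoretic bookkeeping, and the only genuine computation is the eigenvalue check $\ast(\Phi_0\wedge\beta)=-\beta$ on $\Lambda^2_-(\K)\oplus\Lambda^2_-(\Ls)$, done directly from the explicit formula for $\Phi_0$. The one point deserving care is that $\Lambda^2_+(\K)\oplus\Lambda^2_+(\Ls)\cong\V_{0,2,0}^{\oplus2}$ is not multiplicity-free, so $(\Lambda^2_7)_{0,2,0}$ and $(\Lambda^2_{21})_{0,2,0}$ are a particular complementary pair of diagonal copies of $\V_{0,2,0}$ rather than $\Lambda^2_+(\K)$ and $\Lambda^2_+(\Ls)$ themselves --- though this affects neither their irreducibility nor the dimension count, and if desired the pair can be pinned down by computing $\ast(\Phi_0\wedge(e^{12}+e^{34}))$ and $\ast(\Phi_0\wedge(e^{56}+e^{78}))$.
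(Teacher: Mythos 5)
The paper actually leaves this lemma without a written proof, so the only basis for comparison is the proof of the parallel statement in the $\text{G}_2$ setting, Lemma~\ref{lem:G22FormDecomp}, which you explicitly imitate. Your argument is correct, and the bookkeeping (Clebsch--Gordan giving $\K\otimes\Ls\cong\V_{1,2,1}\oplus\V_{1,0,1}$, the multiplicity list $\V_{0,2,0}^{\oplus 2}\oplus\V_{2,0,0}\oplus\V_{0,0,2}\oplus\V_{1,2,1}\oplus\V_{1,0,1}$ for $\Lambda^2(V^*)$, and the dimension counts $3+4=7$ and $3+3+3+12=21$) all checks out.

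One point where you deviate from Lemma~\ref{lem:G22FormDecomp}'s proof, and rightly so: there the authors decompose $\Lambda^2_7$ using the isomorphism $V\to\Lambda^2_7$, $X\mapsto\iota_X\phi_0$. That tool is unavailable here because $V\cong\R^8$ is the spinor module of $\spsev$ while $\Lambda^2_7$ is the seven-dimensional vector module, so no such isomorphism exists; your replacement — forcing the isotype of $\Lambda^2_7$ from the multiplicities and the one eigenvalue computation $\ast(\Phi_0\wedge\beta)=-\beta$ on $\Lambda^2_-(\K)\oplus\Lambda^2_-(\Ls)$ — is the correct adaptation. You are also right to flag that $\V_{0,2,0}$ occurs with multiplicity two, so $(\Lambda^2_7)_{0,2,0}$ and $(\Lambda^2_{21})_{0,2,0}$ are complementary diagonal copies inside $\Lambda^2_+(\K)\oplus\Lambda^2_+(\Ls)$ rather than the factors themselves. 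One could have streamlined the identification of $\Lambda^2_{21}$'s type by observing directly that $\mathfrak{spin}(7)\cong\Lambda^2_{21}$ must contain $\mathfrak{su}(2)^{\oplus 3}\cong\V_{2,0,0}\oplus\V_{0,2,0}\oplus\V_{0,0,2}$ as the adjoint representation of $\sph$ restricted to itself; you mention this as an alternative, and either route is fine.
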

Thus, the decomposition
\begin{align*}
\Lambda^2 \left( V \right) = \left[ \left( \Lambda^2_7 \right)_{0,2,0} \oplus \left( \Lambda^2_7 \right)_{1,0,1} \right] \oplus \left[  \Lambda^2_-(\mathsf{K}) \oplus (\Lambda^2_{21})_{0,2,0} \oplus \Lambda^2_- \left( \Ls \right) \oplus (\Lambda^2_{21})_{1,2,1} \right]
\end{align*}
refines (\ref{eq:Lam2Sp7}), while the decomposition
\begin{align*}
\Lambda^2 \left( V \right) = \Lambda^2_+ \left( \K \right) \oplus \Lambda^2_- \left( \K \right) \oplus \left[ (\Lambda^2_{21})_{1,2,1} \oplus (\Lambda^2_{7})_{1,0,1} \right] \oplus \Lambda^2_+ \left( \Ls \right) \oplus \Lambda^2_- \left( \Ls \right)
\end{align*}
refines (\ref{eq:Lam2Cay}).

\subsubsection{Decomposition of 3-forms on $V$}

\indent \indent We now decompose $\Lambda^3 \left( V^* \right)$ into $\sph$-irreducible submodules. As noted in \S\ref{ssect:SpinSevVect}, the $\spsev$-irreducible decomposition of $\Lambda^3 \left( V^* \right)$ is
\begin{align}\label{eq:Lam3sp7}
\Lambda^3 (V^*) & \cong \Lambda^3_{8} \oplus \Lambda^3_{48}.
\end{align}
On the other hand, using $V^* \cong \K \oplus \Ls$, we also have the decomposition
\begin{align}\label{eq:Lam3sph}
\Lambda^3 ( V^* ) \cong \Lambda^3 ( \K ) \oplus \left( \left( \Lambda^2_+ \left( \K \right) \oplus \Lambda^2_- \left( \K \right) \right) \otimes \Ls \right) \oplus \left( \K \otimes \left( \Lambda^2_+ \left( \Ls \right) \oplus \Lambda^2_- \left( \Ls \right) \right) \right) \oplus \Lambda^3 \left( \Ls \right).
\end{align}
The summands $\Lambda^3 ( \K ) \cong \K \cong \V_{1,1,0}$ and $\Lambda^3 ( \Ls ) \cong \Ls \cong \V_{0,1,1}$ are irreducible, while the others are not. We have the following decompositions as $\sph$-modules:
\begin{align*}
& \Lambda^2_+ \left( \K \right) \otimes \Ls  \cong \V_{0,3,1}  \oplus \V_{0,1,1}, &   &\K \otimes \Lambda^2_+ \left( \Ls \right) \cong \V_{1,3,0} \oplus \V_{1,1,0}, \\
& \Lambda^2_- \left( \K \right) \otimes \Ls   \cong \V_{2,1,1},  &  &\K \otimes \Lambda^2_- \left( \Ls \right)   \cong \V_{1,1,2}.
\end{align*}
We may use the $\sph$-invariant decomposition $V \cong \K \oplus \Ls$ to decompose the space $\Lambda^3_8$ into $\sph$-modules as $\Lambda^3_8 = (\Lambda^3_8)_{\K} \oplus (\Lambda^3_8)_{\Ls},$ where
\begin{align*}
(\Lambda^3_8)_{\K} := \{\ast(\alpha \wedge \Phi_0) \colon \alpha \in \K \} \cong \K, \:\:\:\:\: (\Lambda^3_8)_{\Ls} := \{\ast(\alpha \wedge \Phi_0) \colon \alpha \in \Ls \} \cong \Ls.
\end{align*}
Comparing the decompositions (\ref{eq:Lam3sp7}) and (\ref{eq:Lam3sph}) with the decompositions of their summands, it follows that the space $\Lambda^3_{48}$ decomposes into $\sph$-modules as
\begin{align}\label{eq:Lam348sph}
\Lambda^3_{48} \cong \V_{0,3,1} \oplus \V_{2,1,1} \oplus \V_{1,3,0} \oplus \V_{1,1,2} \oplus \V_{0,1,1} \oplus \V_{1,1,0}.
\end{align}

\begin{defn}
Let $\left( \Lambda^3_{48} \right)_{i,j,k}$ denote the $V_{i,j,k}$ summand in the decomposition (\ref{eq:Lam348sph}). We will also denote $\left( \Lambda^3_{48} \right)_{1,1,0}$ and $\left( \Lambda^3_{48} \right)_{0,1,1}$ by $(\Lambda^3_{48})_{\K}$ and $(\Lambda^3_{48})_{\Ls}$ respectively.
\end{defn}

\subsection{The Refined Torsion Forms}\label{ssect:CayRefTors}

\indent \indent Let $(M^8, \Phi)$ be an $8$-manifold with a $\spsev$-structure.  Fix a point $x \in M$, choose an arbitrary Cayley $4$-plane $\mathsf{K}^\sharp \subset T_xM$, and let $\mathsf{L}^\sharp \subset T_xM$ denote its orthogonal $4$-plane.  Our purpose in this section is to understand how the torsion of the $\spsev$-structure decomposes with respect to the splitting
$$T_xM = \mathsf{K}^\sharp \oplus \mathsf{L}^\sharp.$$

\subsubsection{The Refined Torsion Forms in a Local $\sph$ Frame}

\indent \indent Fix $x \in M^8$ and split $T_x^* M = \K \oplus \Ls$ as above. All calculations in this subsection a performed pointwise, and we suppress reference to $x \in M.$ By the decompositions in \S\ref{ssect:sphreps}, the torsion forms $\tau_1$ and $\tau_3$ decompose into $\sph$-irreducible pieces as follows:
\begin{subequations}\label{eq:SpSevRefTors}
\begin{align}
\tau_1 &= \left( \tau_1 \right)_\K + \left( \tau_1 \right)_{\Ls}, \\
\tau_3 &= \left( \tau_3 \right)_\K + \left( \tau_3 \right)_{\Ls} + \left( \tau_3 \right)_{0,3,1} + \left( \tau_3 \right)_{2,1,1} + \left( \tau_3 \right)_{1,3,0} + \left( \tau_3 \right)_{1,1,2},
\end{align}
\end{subequations}
where $\left( \tau_1 \right)_{*} \in \left( \Lambda^1 \right)_{*},$ and $\left( \tau_3 \right)_{*} \in \left( \Lambda^3_{48} \right)_{*}.$

We refer to $\left( \tau_1 \right)_\K, \left( \tau_1 \right)_{\Ls}, \ldots, \left( \tau_3 \right)_{1,1,2}$ as the \emph{refined torsion forms} of the $\spsev$-structure at $x$ \emph{relative to the splitting} $T_x^* M = \K \oplus \Ls.$

We next express the refined torsion forms in terms of a local $\sph$-frame. Let $\left\lbrace e_1, \ldots, e_8 \right\rbrace$ be an orthonormal basis for $T_x M$ for which $\K^{\sharp} = \text{span} \left( e_1, e_2, e_3, e_4 \right)$ and $\Ls^{\sharp} = \text{span} \left( e_5, e_6, e_7, e_8 \right).$ Let $\left\lbrace e^1, \ldots, e^8 \right\rbrace$ denote the dual basis for $T^*_x M.$ \\

\noindent \textbf{Index Ranges:} We will employ the following index ranges: $1 \leq p,q \leq 4$ and $4 \leq r, s \leq 8$ and $1 \leq a, b \leq 7$ and $1 \leq i,j,k,\ell,m \leq 8$ and $1 \leq \alpha, \beta \leq 12$. \\

\begin{defn}
Define the $2$-forms
\begin{small}
\begin{align*}
\Theta_1 & = e^{12} - e^{34}, & \Gamma_1 & = e^{12} + e^{34}, & \Omega_1 & = e^{56} - e^{78}, & \Upsilon_1 & = e^{56} + e^{78}, \\
\Theta_2 & = e^{13} + e^{24}, & \Gamma_2 & = e^{13} - e^{24}, & \Omega_2 & = e^{57} + e^{68}, & \Upsilon_2 & = e^{57} - e^{68}, \\
\Theta_3 & = e^{14} - e^{23}, & \Gamma_3 & = -e^{14} - e^{23}, & \Omega_3 & = -e^{58} + e^{67}, & \Upsilon_3 & = e^{58} + e^{67}.
\end{align*} 
\end{small}
\end{defn}

\begin{lem}
We have that:
\begin{enumerate}[label=(\alph*),nosep]
\item $\{\Theta_1, \Theta_2, \Theta_3\}$ is a basis of $(\Lambda^2_{21})_{2,0,0}$.
\item $\{ \Omega_1, \Omega_2, \Omega_3 \}$ is a basis of $(\Lambda^2_{21})_{0,0,2}$.
\item $\{\Gamma_1 - \Upsilon_1, \Gamma_2 - \Upsilon_2, \Gamma_3 - \Upsilon_3 \}$ is a basis of $(\Lambda^2_{21})_{0,2,0}$.
\item $\{\Gamma_1 - \Upsilon_1, \Gamma_2 - \Upsilon_2, \Gamma_3 - \Upsilon_3 \}$ is a basis of $(\Lambda^2_{21})_{0,2,0}$.
\item $\{\Gamma_1 + \Upsilon_1, \Gamma_2 + \Upsilon_2, \Gamma_3 + \Upsilon_3 \}$ is a basis of $(\Lambda^2_{7})_{0,2,0}$.
\end{enumerate}
\end{lem}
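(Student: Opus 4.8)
The proof will rest on two ingredients: (i) the standard linear algebra of $2$-forms in dimension $4$, which identifies $\{\Theta_1,\Theta_2,\Theta_3\}$, $\{\Gamma_1,\Gamma_2,\Gamma_3\}$ as bases of $\Lambda^2_-(\K)$, $\Lambda^2_+(\K)$ respectively, and $\{\Omega_p\}$, $\{\Upsilon_p\}$ as bases of $\Lambda^2_-(\Ls)$, $\Lambda^2_+(\Ls)$; and (ii) the behaviour of these twelve forms under the $\spsev$-equivariant operator $P\colon\Lambda^2(V^*)\to\Lambda^2(V^*)$, $P(\beta)=*(\Phi_0\wedge\beta)$, whose eigenspaces are precisely $\Lambda^2_7$ (eigenvalue $3$) and $\Lambda^2_{21}$ (eigenvalue $-1$) by the definitions in \S\ref{ssect:SpinSevVect}. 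Ingredient (i) is immediate from the explicit formulas for the $2$-forms, so all the work is in (ii).

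For (a): each $\Theta_p$ is anti-self-dual and built from $e^1,\dots,e^4$, hence $\Theta_p\in\Lambda^2_-(\K)$, and the three are visibly independent, so they form a basis of the $3$-dimensional space $\Lambda^2_-(\K)$. A one-line Hodge-star computation gives $P(\Theta_p)=-\Theta_p$ (this is the ``reader can check'' fact $\Lambda^2_-(\K)\subseteq\Lambda^2_{21}$ already recorded in the excerpt), whence $(\Lambda^2_{21})_{2,0,0}=\Lambda^2_{21}\cap\Lambda^2_-(\K)=\Lambda^2_-(\K)$ and (a) follows. Part (b) is the verbatim argument with $(\K,\Theta_p)$ replaced by $(\Ls,\Omega_p)$.

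The content of (c)--(e) is the computation of $P(\Gamma_p)$ and $P(\Upsilon_p)$. The device that makes this tractable is to rewrite the Cayley form as
\[
\Phi_0 \;=\; e^{1234}+e^{5678}+\textstyle\sum_{q=1}^{3}\Gamma_q\wedge\Upsilon_q,
\]
which one reads off directly from its defining formula, together with the elementary relations $\Gamma_q\wedge\Gamma_{q'}=2\delta_{qq'}\,e^{1234}$, $\Upsilon_q\wedge\Upsilon_{q'}=2\delta_{qq'}\,e^{5678}$, and $e^{1234}\wedge\Gamma_q=e^{5678}\wedge\Upsilon_q=0$ (orthogonality of self-dual forms). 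Expanding $\Phi_0\wedge\Gamma_p$ and applying $*$ then yields $P(\Gamma_p)=\Gamma_p+2\Upsilon_p$, and symmetrically $P(\Upsilon_p)=2\Gamma_p+\Upsilon_p$; I would carry this out explicitly for $p=1$ and note that $p=2,3$ are entirely analogous (and, conceptually, forced by $\sph$-equivariance of $P$, the middle $\mathrm{SU}(2)$-factor rotating the triples $(\Gamma_p)$ and $(\Upsilon_p)$ in tandem). Hence $P(\Gamma_p+\Upsilon_p)=3(\Gamma_p+\Upsilon_p)$ and $P(\Gamma_p-\Upsilon_p)=-(\Gamma_p-\Upsilon_p)$, i.e.\ $\Gamma_p+\Upsilon_p\in\Lambda^2_7$ and $\Gamma_p-\Upsilon_p\in\Lambda^2_{21}$.

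To conclude, set $W:=\Lambda^2_+(\K)\oplus\Lambda^2_+(\Ls)$. The six vectors $\{\Gamma_p+\Upsilon_p\}\cup\{\Gamma_p-\Upsilon_p\}$ are independent and span $W$ (their span contains every $\Gamma_p$ and every $\Upsilon_p$), so $W=\mathrm{span}\{\Gamma_p+\Upsilon_p\}\oplus\mathrm{span}\{\Gamma_p-\Upsilon_p\}$. Since $\Lambda^2=\Lambda^2_7\oplus\Lambda^2_{21}$ and $\mathrm{span}\{\Gamma_p+\Upsilon_p\}\subseteq W\cap\Lambda^2_7=(\Lambda^2_7)_{0,2,0}$, $\mathrm{span}\{\Gamma_p-\Upsilon_p\}\subseteq W\cap\Lambda^2_{21}=(\Lambda^2_{21})_{0,2,0}$, comparing dimensions ($3+3=6$) forces both inclusions to be equalities; this proves (e) and (c)/(d). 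The main obstacle is purely the sign-bookkeeping in computing $\Phi_0\wedge\Gamma_p$ and its Hodge dual, which the decomposition $\Phi_0=e^{1234}+e^{5678}+\sum\Gamma_q\wedge\Upsilon_q$ and the orthogonality relations among the $\Gamma$'s and $\Upsilon$'s keep under control.
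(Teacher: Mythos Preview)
Your proof is correct and is precisely the routine verification the paper has in mind; the paper states this lemma without proof, leaving the eigenvalue check for $P(\beta)=*(\Phi_0\wedge\beta)$ to the reader, and your use of the decomposition $\Phi_0=e^{1234}+e^{5678}+\sum_q\Gamma_q\wedge\Upsilon_q$ together with the orthogonality relations among the $\Gamma_q$ and $\Upsilon_q$ is the clean way to carry it out.
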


\begin{defn}
Define the $3$-forms
\begin{align*}
\upsilon_{i} = \iota_{e_i} \Phi_0, \\
\rho_{p} = \upsilon_p - 7 *_{\K} e_p, \:\:\: & \rho_{r} = \upsilon_{r} - 7 *_{\Ls} e_{r},
\end{align*}
and
\begin{small}
\begin{align*}
\mu_1 & = 2( e^5 \wedge \Theta_3 + e^6 \wedge \Theta_2 ), &  \nu_1 & = 2(-e^1 \wedge \Omega_3 + e^2 \wedge \Omega_2), \\
\mu_2 & = 2( -e^5 \wedge \Theta_2 + e^6 \wedge \Theta_3), & \nu_2 & = 2(e^1 \wedge \Omega_2 + e^2 \wedge \Omega_3), \\
\mu_3 & = 2 e^5 \wedge \Theta_1, & \nu_3 & = 2(e^3 \wedge \Omega_3 - e^4 \wedge \Omega_2), \\
\mu_4 & = 2 e^6 \wedge \Theta_1, & \nu_4 & = 2(-e^3 \wedge \Omega_2 - e^4 \wedge \Omega_3), \\
\mu_5 & = 2( -e^5 \wedge \Theta_3 + e^6 \wedge \Theta_2), & \nu_5 & = 2(e^3 \wedge \Omega_3 + e^4 \wedge \Omega_2), \\
\mu_6 & = 2( -e^5 \wedge \Theta_2 - e^6 \wedge \Theta_3), & \nu_6 & = 2(-e^3 \wedge \Omega_2 + e^4 \wedge \Omega_3), \\
\mu_7 & = 2(e^7 \wedge \Theta_3 - e^8 \wedge \Theta_2), & \nu_7 & = 2(e^1 \wedge \Omega_3 + e^2 \wedge \Omega_2), \\
\mu_8 & = 2(-e^7 \wedge \Theta_2 - e^8 \wedge \Theta_3), & \nu_8 & = 2(-e^1 \wedge \Omega_2 + e^2 \wedge \Omega_3), \\
\mu_9 & = 2 e^7 \wedge \Theta_1, & \nu_9 & = -2 e^1 \wedge \Omega_1, \\
\mu_{10} & = -2 e^8 \wedge \Theta_1, & \nu_{10} & = 2 e^2 \wedge \Omega_1, \\
\mu_{11} & = 2(-e^7 \wedge \Theta_3 - e^8 \wedge \Theta_2), & \nu_{11} & = 2 e^3 \wedge \Omega_1, \\
\mu_{12} & = 2(-e^7 \wedge \Theta_2 + e^8 \wedge \Theta_3), & \nu_{12} & = -2 e^4 \wedge \Omega_1, \\
 & & & \\
 \lambda_1 & = 3 \left( e^5 \wedge \Gamma_3 + e^6 \wedge \Gamma_2 \right), & \kappa_1 & = 3 \left( -e^1 \wedge \Upsilon_3 + e^2 \wedge \Upsilon_2 \right), \\
 \lambda_2 & =3 \left( -e^5 \wedge \Gamma_2 + e^6 \wedge \Gamma_3 \right), & \kappa_2 & = 3 \left( e^1 \wedge \Upsilon_2 + e^2 \wedge \Upsilon_3 \right), \\
 \lambda_3 & =  2 e^5 \wedge \Gamma_1 + e^7 \wedge \Gamma_3 - e^8 \wedge \Gamma_2, & \kappa_3 & =3 \left( e^3 \wedge \Upsilon_3 - e^4 \wedge \Upsilon_2 \right), \\
 \lambda_4 & = 2 e^6 \wedge \Gamma_1 - e^7 \wedge \Gamma_2 - e^8 \wedge \Gamma_3, & \kappa_4 & = 3 \left( -e^3 \wedge \Upsilon_2 - e^4 \wedge \Upsilon_3 \right), \\
 \lambda_5 & = -e^5 \wedge \Gamma_3 +e^6 \wedge \Gamma_2 + 2 e^7 \wedge \Gamma_1, & \kappa_5 & = -2 e^1 \wedge \Upsilon_1 + e^3 \wedge \Upsilon_3 + e^4 \wedge \Upsilon_2, \\
 \lambda_6 & = -e^5 \wedge \Gamma_2 - e^6 \wedge \Gamma_3 - 2 e^8 \wedge \Gamma_1, & \kappa_6 & = 2 e^1 \wedge \Upsilon_1 - e^3 \wedge \Upsilon_2 + e^4 \wedge \Upsilon_3, \\
 \lambda_7 & = 3 \left( -e^7 \wedge \Gamma_3 - e^8 \wedge \Gamma_2 \right), & \kappa_7 & = e^1 \wedge \Upsilon_3 + e^2 \wedge \Upsilon_2 + 2 e^3 \wedge \Upsilon_1, \\
 \lambda_8 & = 3 \left( -e^7 \wedge \Gamma_2 + e^8 \wedge \Gamma_3\right), & \kappa_8 & = -e^1 \wedge \Upsilon_2 + e^2 \wedge \Upsilon_3 - 2 e^4 \wedge \Upsilon_1. 
\end{align*}
\end{small}
\end{defn}

\begin{lem}
We have that:
\begin{enumerate}[label=(\alph*),nosep]
\item $\{ \upsilon_p \colon 1 \leq p \leq 4 \}$ is a basis of $(\Lambda^3_{8})_{1,1,0}$ and $\{ \upsilon_{p+4} \colon 1 \leq p \leq 4 \}$ is a basis of $(\Lambda^3_{8})_{0,1,1}.$
\item $\{ \rho_p \colon 1 \leq p \leq 4 \}$ is a basis of $(\Lambda^3_{48})_{1,1,0}$ and $\{ \rho_{p+4} \colon 1 \leq p \leq 4 \}$ is a basis of $(\Lambda^3_{48})_{0,1,1}.$
\item $\{\mu_{a} \colon 1 \leq a \leq 12\}$ is a basis of $(\Lambda^3_{48})_{2,1,1}$ and $\{\nu_{a} \colon 1 \leq a \leq 12\}$ is a basis of $(\Lambda^3_{48})_{1,1,2}.$
\item $\{\lambda_i \colon 1 \leq i \leq 8 \}$ is basis of $(\Lambda^3_{48})_{0,3,1}$ and $\{\kappa_i \colon 1 \leq i \leq 8 \}$ is basis of $(\Lambda^3_{48})_{1,3,0}.$
\end{enumerate}
\end{lem}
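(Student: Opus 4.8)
The plan is to verify, for each of the eight collections listed, three things: that its cardinality equals the dimension of the $\sph$-submodule it is claimed to span, that its elements are linearly independent, and that each element lies in that submodule; then elementary linear algebra finishes it. The dimension count is immediate from \S\ref{ssect:sphreps}: using (\ref{eq:Lam348sph}) together with the isomorphisms $(\Lambda^3_8)_\K\cong\K$, $(\Lambda^3_{48})_{1,1,0}\cong\V_{1,1,0}$, etc., the eight target submodules have dimensions $4,4,4,4,12,12,8,8$, matching the sizes of $\{\upsilon_p\}$, $\{\upsilon_{p+4}\}$, $\{\rho_p\}$, $\{\rho_{p+4}\}$, $\{\mu_a\}$, $\{\nu_a\}$, $\{\lambda_i\}$, $\{\kappa_i\}$. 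Linear independence is read off the explicit coordinate expressions: the $\mu_a,\nu_a,\lambda_i,\kappa_i$ are visibly in echelon form relative to the monomial basis $\{e^{ijk}\}$, the $\upsilon_p=\iota_{e_p}\Phi_0$ are manifestly independent, and the $\rho_p$ differ from $\upsilon_p$ only by terms in $\Lambda^3(\K)$ (resp.\ $\Lambda^3(\Ls)$), which preserves independence.

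The substance is the membership step, and here the cases split cleanly. For (a), self-duality of $\Phi_0$ gives $\upsilon_i=\iota_{e_i}\Phi_0=\ast(\Phi_0\wedge e^i)$, so $\upsilon_i\in\Lambda^3_8$ by definition, and since $e^p\in\K$ and $e^{p+4}\in\Ls$, we get $\upsilon_p\in(\Lambda^3_8)_\K$ and $\upsilon_{p+4}\in(\Lambda^3_8)_\Ls$. For (c), the explicit formulas place $\mu_a\in\Lambda^2_-(\K)\otimes\Ls$ and $\nu_a\in\K\otimes\Lambda^2_-(\Ls)$; these spaces are $\sph$-irreducible of types $\V_{2,1,1}$ and $\V_{1,1,2}$, each occurring with multiplicity one in $\Lambda^3(V^*)$ and not at all in $\Lambda^3_8\cong\V_{1,1,0}\oplus\V_{0,1,1}$, so $\Lambda^2_-(\K)\otimes\Ls=(\Lambda^3_{48})_{2,1,1}$ and $\K\otimes\Lambda^2_-(\Ls)=(\Lambda^3_{48})_{1,1,2}$, and membership is automatic. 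For (b), I would use that $\Lambda^3_{48}=(\Lambda^3_8)^\perp$: since $\upsilon_p$ and $\ast_\K e_p$ both lie in the $\V_{1,1,0}$-isotypic component of $\Lambda^3(V^*)$ (which is $(\Lambda^3_8)_\K\oplus(\Lambda^3_{48})_{1,1,0}$), so does $\rho_p$, hence $\langle\rho_p,\upsilon_{q+4}\rangle=0$ automatically, and one checks the finitely many inner products $\langle\rho_p,\upsilon_q\rangle=0$ for $p,q\le4$ (most vanish by disjointness of monomials; the remaining ones come out to $\langle\upsilon_p,\upsilon_p\rangle-7\langle\ast_\K e_p,\upsilon_p\rangle=7-7=0$, which is exactly why the constant $7$ appears). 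This forces $\rho_p\in(\Lambda^3_8)_\K^\perp\cap(\V_{1,1,0}\text{-isotypic})=(\Lambda^3_{48})_{1,1,0}$, and symmetrically $\rho_{p+4}\in(\Lambda^3_{48})_{0,1,1}$.

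Case (d) is where real work is needed, since $\Lambda^2_+(\K)\otimes\Ls\cong\V_{0,3,1}\oplus\V_{0,1,1}$ is reducible, so knowing $\lambda_i\in\Lambda^2_+(\K)\otimes\Ls$ (clear from the formulas) does not by itself place $\lambda_i$ in the summand $(\Lambda^3_{48})_{0,3,1}\cong\V_{0,3,1}$; I must rule out a $\V_{0,1,1}$-component. The key observation is that within $\Lambda^2_+(\K)\otimes\Ls$, the $\V_{0,1,1}$-summand is exactly the orthogonal projection of $(\Lambda^3_8)_\Ls=\mathrm{span}\{\upsilon_5,\dots,\upsilon_8\}$, which has the same $\V_{0,1,1}$-type and nonzero (hence, by irreducibility, injective) image. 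Therefore $\lambda_i$ lies in the $\V_{0,3,1}$-summand if and only if $\langle\lambda_i,\upsilon_{p+4}\rangle=0$ for all $p$; and this same vanishing, combined with the automatic $\langle\lambda_i,\upsilon_q\rangle=0$ for $q\le4$ (as $\lambda_i$ carries no $\V_{1,1,0}$-type) and with $\langle\lambda_i,\ast_\Ls e_{p+4}\rangle=0$ (as $\lambda_i$ has no $\Lambda^3(\Ls)$-component), certifies $\lambda_i\perp\Lambda^3_8$ and $\lambda_i\perp(\Lambda^3_{48})_{0,1,1}$, hence $\lambda_i\in(\Lambda^3_{48})_{0,3,1}$. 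The $8\times4$ inner products are routine with the bases in hand, and the mirror-image argument with $\{\upsilon_1,\dots,\upsilon_4\}$ and $\K\otimes\Lambda^2_+(\Ls)$ handles $\{\kappa_i\}$. Assembling dimension, independence, and membership in each of the eight cases completes the proof; the only genuine obstacle is the bookkeeping in (d), namely keeping track of which copy of $\V_{0,1,1}$ (resp.\ $\V_{1,1,0}$) is being projected away.
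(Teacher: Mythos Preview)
Your proposal is correct. In fact, the paper states this lemma without proof, treating it as a routine verification left to the reader; your write-up supplies exactly the kind of argument one would expect---dimension count, linear independence from the explicit coordinate expressions, and membership via the characterisation $\Lambda^3_{48}=(\Lambda^3_8)^\perp$---and your handling of the reducible cases (b) and (d), in particular the observation that the coefficient $7$ in $\rho_p$ is forced by $\langle\upsilon_p,\upsilon_p\rangle=7$ and $\langle\ast_\K e_p,\upsilon_p\rangle=1$, and that the $\V_{0,1,1}$-summand of $\Lambda^2_+(\K)\otimes\Ls$ is exactly the image of $(\Lambda^3_8)_\Ls$ under orthogonal projection, is the clean way to organise the check.
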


\begin{defn} \label{defn:dagger}
Let $\dagger$ denote the inverse of the isometric isomorphism
\begin{align*}
\Ls^\sharp \to \left( \Lambda^3_{48} \right)_\Ls, \:\:\: e_r \mapsto \tfrac{1}{\sqrt{42}} \rho_r.
\end{align*}
\end{defn}

We now express the refined torsion forms in terms of the above bases. That is, we define functions $A_p, B_{r}$ and $C_q, D_{s}, E_\alpha, F_\beta, X_i, Y_j$ by
\begin{subequations}\label{eq:SpSevRefTorsExpand}
\begin{align}
\left( \tau_1 \right)_{\K} & = 32 A_p e^p, & \left( \tau_3 \right)_{\K} & = 16 C_q \rho_q \\
\left( \tau_1 \right)_{\Ls} & = 32 B_r e^r, & \left( \tau_3 \right)_{\Ls} & = 16 D_s \rho_s \\
 & & \left( \tau_3 \right)_{2,1,1} & = 8 E_\alpha \mu_\alpha, \\
 & & \left( \tau_3 \right)_{1,1,2} & = 8 F_\beta \nu_\beta, \\
 & & \left( \tau_3 \right)_{0,3,1} & = 8 X_i \lambda_i, \\
 & & \left( \tau_3 \right)_{1,3,0} & = 8 Y_j \kappa_j. 
\end{align}
\end{subequations}

For future use, we record the formulae
\begin{align}\label{eq:CayIsoIso}
\left[ \left( \tau_1 \right)_\Ls \right]^\sharp & = 32 B_r e_r, & \left[ \left( \tau_3 \right)_\Ls \right]^\dagger & = 16 \sqrt{42} D_s e_s.
\end{align}

\subsubsection{The Torsion Functions $T_{ai}$}

\indent \indent Let $\left( M^8, \Phi \right)$ be an 8-manifold with a $\spsev$-structure, and let $g_{\Phi}$ denote the underlying Riemannian metric. Let $F_{\text{SO}(8)} \to M$ denote the oriented orthonormal coframe bundle of $g_{\Phi},$ and let $\omega = \left( \omega_1, \ldots, \omega_8 \right) \in \Omega^1 \left( F_{SO(8)} ; V \right)$ denote the tautological 1-form. By the Fundamental Lemma of Riemannian Geometry, there exists a unique 1-form $\psi \in \Omega^1 \left( F_{\text{SO}(8)} ; \mathfrak{so}(8) \right),$ the Levi-Civita connection form of $g_{\Phi},$ satisfying the First Structure Equation
\begin{align}\label{eq:SpSevFirstStruct}
d \omega = - \psi \wedge \omega.
\end{align}

Let $\pi : F_{\spsev} \to M$ denote the $\spsev$-coframe bundle of $M.$ Restricted to $F_{\spsev} \subset F_{
\text{SO}(8)},$ the Levi-Civita 1-form $\psi$ is no longer a connection 1-form. Indeed, according to the $\spsev$-invariant splitting $\mathfrak{so}(8) = \mathfrak{spin}(7) \oplus \R^7,$ we have the decomposition
\begin{align}\label{eq:SpSevLvCiDecomp}
\psi = \theta + 2 \gamma,
\end{align}
where $\theta = \left( \theta_{ij} \right) \in \Omega^1 \left( F_{\spsev} ; \mathfrak{spin}(7) \right)$ is a connection 1-form (the so-called \emph{natural connection} of the $\spsev$-structure $\Phi$) and $\gamma \in \Omega \left( F_{\spsev} ; \R^7 \right)$ is a $\pi$-semibasic 1-form. The inclusion $\R^7 \hookrightarrow \mathfrak{so}(8)$ is given by
\begin{small}
\begin{align*}
\left( \gamma_1, \ldots \gamma_7 \right) \mapsto \left[ \begin {array}{cccc|cccc} 0 & \gamma_{{1}} & \gamma_{{2}} & \gamma_{{
3}} & \gamma_{{4}} & \gamma_{{5}} & \gamma_{{6}} & \gamma_{{7}} \\ 
-\gamma_{{1}} & 0 & \gamma_{{3}} & -\gamma_{{2}} & \gamma_{{5}} & -\gamma_{{4}} & \gamma_{{7}} & -\gamma_{{6}} \\
-\gamma_{{2}} & -\gamma_{{3}} & 0 & \gamma_{{1}} & \gamma_{{6}} & -\gamma_{{7}} & -\gamma_{{4}} & \gamma_{{5}} \\
-\gamma_{{3}} & \gamma_{{2}} & -\gamma_{{1}} & 0 & -\gamma_{{7}} & -\gamma_{{6}} & \gamma_{{5}} & \gamma_{{4}} \\ \hline
-\gamma_{{4}} & -\gamma_{{5}} & -\gamma_{{6}} & \gamma_{{7}} & 0 & \gamma_{{1}} & \gamma_{{2}} & -\gamma_{{3}} \\
-\gamma_{{5}} & \gamma_{{4}} & \gamma_{{7}} & \gamma_{{6}} & -\gamma_{{1}} & 0 & -\gamma_{{3}} & -\gamma_{{2}} \\
-\gamma_{{6}} & -\gamma_{{7}} & \gamma_{{4}} & -\gamma_{{5}} & -\gamma_{{2}} & \gamma_{{3}} & 0 & \gamma_{{1}} \\
-\gamma_{{7}} & \gamma_{{6}} & -\gamma_{{5}} & -\gamma_{{4}} & \gamma_{{3}} & \gamma_{{2}} & -\gamma_{{1}} & 0 \end {array} \right]. 
\end{align*}
\end{small}

Since $\gamma$ is $\pi$-semibasic, we may write
\begin{align*}
\gamma_a = T_{ai} \omega_i, \:\:\: \text{for} \:\:\: 1 \leq a \leq 7
\end{align*}
for some matrix-valued function $T = \left( T_{ai} \right) : F_{\spsev} \to \text{Hom} \left( V, \R^7 \right).$ The functions $T_{ai}$ encode the torsion of the $\spsev$-structure: they are equivalent to the torsion forms $\tau  _1$ and $\tau_3$ via the isomorphism
\begin{align}\label{eq:CayGamMap}
\text{Hom} \left( V, \R^7 \right) \cong \Lambda^1_8 \oplus \Lambda^3_{48}.
\end{align}

\subsubsection{Decomposition of the Torsion Functions}

\indent \indent For our computations in \S\ref{ssect:MCCay}, we will need to express the torsion function $T_{ai}$ in terms of the functions $A_p, B_r, \ldots, Y_j.$ To this end, we continue to work on the total space of the $\spsev$-coframe bundle $\pi : F_{\spsev} \to M,$ pulling back all quantities defined on $M$ to $F_{\spsev}$. Following common convention, and similarly to \S\ref{sssect:G2DecompTors}, we omit $\pi^*$ from the notation.

The torsion forms $\tau_1$ and $\tau_3$ satisfy
\begin{align*}
d \Phi = \tau_1 \wedge \Phi + * \tau_3.
\end{align*}
Into the left-hand side we substitute $\Phi = \tfrac{1}{24} \Phi_{ijkl} \omega_{ijkl}$ and use the first structure equation (\ref{eq:SpSevFirstStruct}), together with the decomposition (\ref{eq:SpSevLvCiDecomp}), to express the $d \omega$ terms in terms of the torsion functions $T_{ai}$. Into the right-hand side we again substitute $\Phi = \tfrac{1}{24} \Phi_{ijkl} \omega_{ijkl},$ as well as the expansions (\ref{eq:SpSevRefTors}) and (\ref{eq:SpSevRefTorsExpand}).

Upon equating coefficients, we obtain a system of $56$ linear equations relating the $56$ functions $T_{ai}$ on the left-hand side to the $56$ functions $A_p, B_r, \ldots, Y_j$ on the right-hand side. One can then use a computer algebra system (we have used M\textsc{aple}) to solve this linear system for the $T_{ai}.$ We now exhibit the result, taking advantage of the $\sph$-invariant isomorphism
\begin{align*}
\text{Hom} \left( V, \R^7 \right) \cong \left( \V_{0,2,0} \oplus \V_{1,0,1} \right) \otimes \left( \K \oplus \Ls \right)
\end{align*}
to highlight the structure of the solution.

We find
\begin{small}
\begin{align*}
\left[ \begin {array}{cccc} T_{{11}} & T_{{12}} & T_{{13}} & T_{{14}} \\
T_{{21}} & T_{{22}} & T_{{23}} & T_{{24}} \\
T_{{31}} & T_{{32}} & T_{{33}} & T_{{34}}
\end {array} \right] = & \left[ \begin {array}{cccc} 2Y_{{5}} & -2Y_{{6}} & -2Y_{{7}} & 2Y_{{8}} \\
Y_{{8}}-3Y_{{2}} & -3Y_{{1}}-Y_{{7}} & 3Y_{{4}}+Y_{{6}} & 3Y_{{3}}-Y_{{5}} \\
-3Y_{{1}}+Y_{{7}} & Y_{{8}}+3Y_{{2}} & 3Y_{{3}}+Y_{{5}}& -3Y_{{4}}+Y_{{6}} \end {array} \right] \\
& + \left[ \begin {array}{cccc} -4C_{{2}}+A_{{2}} & 4C_{{1}}-A_{{1}}&-4C_{{4}}+A_{{4}} & 4C_{{3}}-A_{{3}} \\
-4C_{{3}}+A_{{3}} & 4C_{{4}}-A_{{4}} & 4C_{{1}}-A_{{1}} & -4C_{{2}}+A_{{2}} \\
-4C_{{4}}+A_{{4}} & -4C_{{3}}+A_{{3}} & 4C_{{2}}-A_{{2}}&4C_{{1}}-A_{{1}}\end {array} \right],
\end{align*}
\end{small}
corresponding to $\V_{0,2,0} \otimes \K \cong \V_{1,3,0} \oplus \V_{1,1,0},$ and
\begin{small}
\begin{align*}
\left[ \begin {array}{cccc} T_{{15}}&T_{{16}}&T_{{17}}&T_{{18}} \\
T_{{25}}&T_{{26}}&T_{{27}}&T_{{28}} \\
T_{{35}}&T_{{36}}&T_{{37}}&T_{{38}}
\end {array} \right] = & \left[ \begin {array}{cccc} -2X_{{3}}  & -2X_{{4}} & -2X_{{5}} & 2X_{{6}} \\
X_{{6}}+3X_{{2}} & -X_{{5}}-3X_{{1}} & X_{{4}}+3X_{{8}} & X_{{3}}+3X_{{7}} \\
-X_{{5}}+3X_{{1}} & -X_{{6}}+3X_{{2}} & X_{{3}}-3X_{{7}} & -X_{{4}}+3X_{{8}}
\end {array} \right] \\
& + \left[ \begin {array}{cccc} -4D_{{2}}+B_{{2}} & 4D_{{1}}-B_{{1}} & -4D_{{4}}+B_{{4}} & 4D_{{3}}-B_{{3}} \\
-4D_{{3}}+B_{{3}}&4D_{{4}}-B_{{4}} & 4D_{{1}}-B_{{1}} & -4D_{{2}}+B_{{2}} \\
4D_{{4}}-B_{{4}} & 4D_{{3}}-B_{{3}} & -4D_{{2}}+B_{{2}} & -4D_{{1}}+B_{{1}} \end {array}
 \right],
\end{align*}
\end{small}
corresponding to $\V_{0,2,0} \otimes \Ls \cong \V_{0,3,1} \oplus \V_{0,1,1},$ and
\begin{small}
\begin{align}\label{eq:CayTorsResult}
\left[ \begin {array}{cccc} T_{{41}} & T_{{42}} & T_{{43}}&T_{{44}}
\\ T_{{51}}&T_{{52}}&T_{{53}}&T_{{54}}
\\ T_{{61}}&T_{{62}}&T_{{63}}&T_{{64}}
\\ T_{{71}}&T_{{72}}&T_{{73}}&T_{{74}}
\end {array} \right] = & \left[ \begin {array}{cccc} -2E_{{12}}+E_{{4}}&2E_{{11}}+E_{{3}}& -2E_{{6}}-E_{{10}} & -2E_{{5}}+E_{{9}} \\
2E_{{11}}-E_{{3}} & 2E_{{12}}+E_{{4}} & 2E_{{5}}+E_{{9}} & -2E_{{6}}+E_{{10}} \\
2E_{{2}}-E_{{10}} & 2E_{{1}}+E_{{9}} & -2E_{{8}}-E_{{4}} & 2E_{{7}}-E_{{3}} \\
-E_{{9}}+2E_{{1}} & -2E_{{2}}-E_{{10}}&-2E_{{7}}-E_{{3}}&-2E_{{8}}+E_{{4}}
\end {array} \right] \\
& + \left[ \begin {array}{cccc} 3D_{{1}}+B_{{1}}&-3D_{{2}}-B_{{2}}&-3
D_{{3}}-B_{{3}}&3D_{{4}}+B_{{4}} \\
3D_{{2}}+B_{{2}} & 3D_{{1}}+B_{{1}} & 3D_{{4}}+B_{{4}} & 3D_{{3}}+B_{{3}} \\
3D_{{3}}+B_{{3}}&-3D_{{4}}-B_{{4}}&3D_{{1}}+B_{{1}}&-3D_{{2}}-B_{{2}}\\
3D_{{4}}+B_{{4}}&3D_{{3}}+B_{{3}}&-3D_{{2}}-B_{{2}}&-3D_{{1}}-B_{{1}}\end {array} \right], \nonumber
\end{align}
\end{small}
corresponding to $\V_{1,0,1} \otimes \K \cong \V_{2,1,1} \oplus \V_{0,1,1},$ and
\begin{small}
\begin{align*}
\left[ \begin {array}{cccc} T_{{45}}&T_{{46}}&T_{{47}}&T_{{48}}
\\ T_{{55}}&T_{{56}}&T_{{57}}&T_{{58}}
\\ T_{{65}}&T_{{66}}&T_{{67}}&T_{{68}}
\\ T_{{75}}&T_{{76}}&T_{{77}}&T_{{78}}
\end {array} \right] = & \left[ \begin {array}{cccc} 2F_{{4}}-F_{{10}}&-2F_{{3}}+F_{{9}}&2F_{{8}}+F_{{12}}&2F_{{7}}-F_{{11}} \\
-2F_{{3}}-F_{{9}} & -2F_{{4}}-F_{{10}} & -2F_{{7}}-F_{{11}} & 2F_{{8}}-F_{{12}} \\
2F_{{2}}+F_{{12}} & -2F_{{1}}-F_{{11}} & 2F_{{6}}+F_{{10}} & 2F_{{5}}-F_{{9}} \\
2F_{{1}}-F_{{11}} & 2F_{{2}}-F_{{12}} & 2F_{{5}}+F_{{9}} & F_{{10}}-2F_{{6}}
\end {array} \right] \\
& + \left[ \begin {array}{cccc} -3C_{{1}}-A_{{1}}&3C_{{2}}+A_{{2}}&3
C_{{3}}+A_{{3}}&-3C_{{4}}-A_{{4}}\\
-3C_{{2}}-A_{{2}}&-3C_{{1}}-A_{{1}}&-3C_{{4}}-A_{{4}}&-3C_{{3}}-A_{{3}} \\
-3C_{{3}}-A_{{3}} & 3C_{{4}}+A_{{4}} & -3C_{{1}}-A_{{1}} & 3C_{{2}}+A_{{2}} \\
3C_{{4}}+A_{{4}} & 3C_{{3}}+A_{{3}} & -3C_{{2}}-A_{{2}} & -3C_{{1}}-A_{{1}} \end {array}
 \right],
\end{align*}
\end{small}
corresponding to $\V_{1,0,1} \otimes \Ls \cong \V_{1,1,2} \oplus \V_{1,1,0}.$


\subsection{Mean Curvature of Cayley $4$-Folds}\label{ssect:MCCay}

\indent \indent In this subsection we derive a formula (Theorem \ref{thm:CayleyMC}) for the mean curvature of a Cayley 4-fold in an arbitrary 8-manifold $\left( M, \Phi \right)$ with $\spsev$-structure.

We continue with the notation of \S\ref{ssect:CayRefTors}, letting $\pi : F_{\spsev} \to M$ denote the $\spsev$-coframe bundle of $M$ and $\omega = \left( \omega_{\K}, \omega_{\Ls} \right) \in \Omega^1 \left( F_{\spsev} ; \K^\sharp \oplus \Ls^\sharp \right)$ denote the tautological 1-form. We remind the reader that $\theta = \left( \theta_{ij} \right) \in \Omega^1 \left( F_{\spsev} ; \mathfrak{spin}(7) \right)$ is the natural connection 1-form, and that $\gamma = \left( \gamma_{ij} \right) \in \Omega^1 \left( F_{\spsev} ; \R^7 \right)$ is a $\pi$-semibasic 1-form encoding the torsion of $\Phi.$ Here $\gamma_{ij}$ refers to the image of $\left( \gamma_1, \ldots, \gamma_7 \right)$ under the map $\R^7 \to \mathfrak{so}(8)$ defined by (\ref{eq:CayGamMap}). We have $\gamma_a = T_{ai} \omega_i$ for $T = \left( T_{ai} \right) : F_{\spsev} \to \text{Hom} \left(V, \R^7 \right).$

Let $f: \Sigma^4 \to M$ be an immersion of a Cayley 4-form into $M,$ and let $f^* \left( F_{\spsev} \right) \to \Sigma$ denote the pullback bundle. Let $B \subset f^* \left( F_{\spsev} \right)$ denote the subbundle of coframes adapted to $\Sigma,$ i.e., the subbundle whose fibre over $x \in \Sigma$ is
\begin{align*}
B|_x = \{u \in f^*(F_{\spsev})|_x \colon u(T_x\Sigma) = \K^\sharp \oplus 0 \}.
\end{align*}
We recall (Proposition \ref{prop:CayAlg}) that $\spsev$ acts transitively on the set of Cayley 4-planes, with stabilizer isomorphic to $\sph$.  It follows that $B$ is a well-defined $\sph$-bundle over $\Sigma.$ Note that on $B$ we have
\begin{align*}
\omega_{\Ls} = 0.
\end{align*}

We may exploit the splitting $T_x M = T_x \Sigma \oplus\left(  T_x \Sigma \right)^{\perp} \cong \K^\sharp \oplus \Ls^\sharp$ to decompose $\theta$ and $\gamma$ into $\sph$-irreducible pieces. To decompose the connection 1-form $\theta \in \Omega^1 \left( B ; \mathfrak{spin}(7) \right),$ we split
\begin{align*}
\mathfrak{spin}(7) \cong \Lambda^2_{21} \cong \Lambda^2_{-} \left( \K \right) \oplus \left( \Lambda^2_{21} \right)_{0,2,0} \oplus \Lambda^2_- \left( \Ls \right) \oplus \left( \Lambda^2_{21} \right)_{1,2,1},
\end{align*}
so that $\theta$ takes the block form
\begin{small}
\begin{align*}
\theta = & \begin{bmatrix} \chi+\rho_\K(\zeta) & -\sigma^T \\
\sigma & \xi+\rho_{\Ls}(\zeta) \end{bmatrix} \\
= & \left[ \begin {array}{cccc|cccc} 0 & \chi_{{1}}+\zeta_{{1}} & \chi_{{2}}+
\zeta_{{2}} & \chi_{{3}}-\zeta_{{3}} & 2\sigma_{{1}}-\sigma_{{7}} & 2
\sigma_{{2}}-\sigma_{{8}} & \sigma_{{5}}-2\sigma_{{11}} & -\sigma_{{6}}+2\sigma_{{12}} \\
-\chi_{{1}}-\zeta_{{1}} & 0 & -\chi_{{3}}-\zeta_{{3}} & \chi_{{2}}-\zeta_{{2}} & -2\sigma_{{2}}-\sigma_{{8}} & 2\sigma_{{1}}+\sigma_{{7}} & -\sigma_{{6}}-2\sigma_{{12}} & -\sigma_{{5}}-2\sigma_{{11}} \\
-\chi_{{2}}-\zeta_{{2}} & \chi_{{3}}+\zeta_{{3}} & 0 & -\chi_{{1}}+\zeta_{{1}} & -2\sigma_{{3}}-\sigma_{{5}} & -2\sigma_{{4}}-\sigma_{{6}} & -\sigma_{{7}}-2\sigma_{{9}} & \sigma_{{8}}+2\sigma_{{10}} \\ 
-\chi_{{3}}+\zeta_{{3}} & -\chi_{{2}}+\zeta_{{2}} & \chi_{{1}}-\zeta_{{1}} & 0 & 2\sigma_{{4}}-\sigma_{{6}} & -2\sigma_{{3}}+\sigma_{{5}} & \sigma_{{8}}-2\sigma_{{10}} & \sigma_{{7}}-2\sigma_{{9}} \\ \hline
-2\sigma_{{1}}+\sigma_{{7}} & 2\sigma_{{2}}+\sigma_{{8}} & 2\sigma_{{3}}+\sigma_{{5}} & -2\sigma_{{4}}+\sigma_{{6}} & 0 & -\xi_{{1}}-\zeta_{{1}} & -\xi_{{2}}-\zeta_{{2}} & \xi_{{3}}-\zeta_{{3}}\\
-2\sigma_{{2}}+\sigma_{{8}} & -2\sigma_{{1}}-\sigma_{{7}} & 2\sigma_{{4}}+\sigma_{{6}} & 2\sigma_{{3}}-\sigma_{{5}} & \xi_{{1}}+\zeta_{{1}} & 0 & -\xi_{{3}}-\zeta_{{3}} & -\xi_{{2}}+\zeta_{{2}}\\
-\sigma_{{5}}+2\sigma_{{11}} & \sigma_{{6}}+2\sigma_{{12}} & \sigma_{{7}}+2\sigma_{{9}} & -\sigma_{{8}}+2\sigma_{{10}} & \xi_{{2}}+\zeta_{{2}} & \xi_{{3}}+\zeta_{{3}} & 0 & \xi_{{1}}-\zeta_{{1}} \\
\sigma_{{6}}-2\sigma_{{12}} & \sigma_{{5}}+2\sigma_{{11}} & -\sigma_{{8}}-2\sigma_{{10}} & -\sigma_{{7}}+2\sigma_{{9}} & -\xi_{{3}}+\zeta_{{3}} & \xi_{{2}}-\zeta_{{2}} & -\xi_{{1}}+\zeta_{{1}} & 0 \end {array} \right].
\end{align*}
\end{small}
Similarly, the 1-form $\gamma \in \Omega^1 \left(B ; \R^7 \right)$ breaks into block form as
\begin{small}
\begin{align*}
\gamma = \begin{bmatrix} \phi_\K (\gamma_{0,2,0}) & -\left(\gamma_{1,0,1}\right)^T \\
\gamma_{1,0,1} & \phi_\Ls(\gamma_{0,2,0}) \end{bmatrix} = \left[ \begin {array}{cccc|cccc} 0&\gamma_{{1}}&\gamma_{{2}}&\gamma_{{
3}}&\gamma_{{4}}&\gamma_{{5}}&\gamma_{{6}}&\gamma_{{7}}
\\
-\gamma_{{1}}&0&\gamma_{{3}}&-\gamma_{{2}}&\gamma
_{{5}}&-\gamma_{{4}}&\gamma_{{7}}&-\gamma_{{6}}\\
-\gamma_{{2}}&-\gamma_{{3}}&0&\gamma_{{1}}&\gamma_{{6}}&-\gamma_{{7}}&-\gamma_{{4}}&\gamma_{{5}}\\
-\gamma_{{3}}&\gamma_{{2}}&-\gamma_{{1}}&0&-\gamma_{{7}}&-\gamma_{{6}}&\gamma_{{5}}&\gamma_{{4}}\\ \hline
-\gamma_{{4}}&-\gamma_{{5}}&-\gamma_{{6}}&\gamma_{{7}}&0&\gamma_{{1}}&\gamma_{{2}}&-\gamma_{{3}} \\
-\gamma_{{5}} & \gamma_{{4}}&\gamma_{{7}}&\gamma_{{6}}&-\gamma_{{1}}&0&-\gamma_{{3}}&-\gamma_{{2}} \\
-\gamma_{{6}}&-\gamma_{{7}}&\gamma_{{4}}&-\gamma_{{5}}&-\gamma_{{2}}&
\gamma_{{3}}&0&\gamma_{{1}} \\
-\gamma_{{7}}&\gamma_{{6}}&-\gamma_{{5}}&-\gamma_{{4}}&\gamma_{{3}}&\gamma_{{2}}&-\gamma_{{1}}&0\end {array} \right]
\end{align*}
\end{small}

In this notation, the first structure equation (\ref{eq:SpSevFirstStruct}) on $B$ reads
\begin{align}
d \left( \begin{array}{c} \omega_\K \\
0 
\end{array} \right) = - \left( \begin{bmatrix} \chi+\rho_\K(\zeta) & -\sigma^T \\
\sigma & \xi+\rho_{\Ls}(\zeta) \end{bmatrix} + 2 \begin{bmatrix} \phi_\K (\gamma_{0,2,0}) & -\left(\gamma_{1,0,1}\right)^T \\
\gamma_{1,0,1} & \phi_\Ls(\gamma_{0,2,0}) \end{bmatrix} \right) \wedge \left( \begin{array}{c} \omega_\K \\
0 
\end{array} \right).
\end{align}
In particular, the second line gives
\begin{align}
\left( \sigma +2 \gamma_{1,0,1} \right) \wedge \omega_{\K} = 0,
\end{align}
or in detail
\begin{small}
\begin{align}\label{eq:CaydomLcons}
& \left[ \begin{array}{cccc}
-2\sigma_{{1}}+\sigma_{{7}} & 2\sigma_{{2}}+\sigma_{{8}} & 2\sigma_{{3}}+\sigma_{{5}} & -2\sigma_{{4}}+\sigma_{{6}} \\
-2\sigma_{{2}}+\sigma_{{8}} & -2\sigma_{{1}}-\sigma_{{7}} & 2\sigma_{{4}}+\sigma_{{6}} & 2\sigma_{{3}}-\sigma_{{5}} \\
-\sigma_{{5}}+2\sigma_{{11}} & \sigma_{{6}}+2\sigma_{{12}} & \sigma_{{7}}+2\sigma_{{9}} & -\sigma_{{8}}+2\sigma_{{10}}  \\
\sigma_{{6}}-2\sigma_{{12}} & \sigma_{{5}}+2\sigma_{{11}} & -\sigma_{{8}}-2\sigma_{{10}} & -\sigma_{{7}}+2\sigma_{{9}} \end {array} \right] \wedge \left[ \begin{array}{c}
\omega_1 \\
\omega_2 \\
\omega_3 \\
\omega_4
\end{array} \right] \\
& = -2  \left[ \begin{array}{cccc} -\gamma_{{4}} & -\gamma_{{5}} & -\gamma_{{6}} & \gamma_{{7}} \\
-\gamma_{{5}} & \gamma_{{4}} & \gamma_{{7}} & \gamma_{{6}} \\
-\gamma_{{6}} & -\gamma_{{7}} & \gamma_{{4}} & -\gamma_{{5}} \\
-\gamma_{{7}} & \gamma_{{6}} & -\gamma_{{5}} & -\gamma_{{4}} \end {array} \right] \wedge \left[ \begin{array}{c}
\omega_1 \\
\omega_2 \\
\omega_3 \\
\omega_4
\end{array} \right]. \nonumber
\end{align}
\end{small}

Note that on $B$ the 1-forms $\sigma$ and $\gamma$ are semibasic. We have
\begin{align*}
\sigma_{\alpha} & = S_{\alpha p} \omega_p, & \gamma_{r} = T_{r q} \omega_q,
\end{align*}
for some function $S = \left( S_{\alpha p} \right) : B \to \V_{1,2,1} \otimes \K,$ recalling our index ranges $1 \leq \alpha \leq 12$ and $1 \leq p, q \leq 4$ and $4 \leq r \leq 8.$

Now, the 48 functions $S_{\alpha p}$ and the 16 functions $T_{r q}$ are not independent: equation (\ref{eq:CaydomLcons}) amounts to 24 linear relations among them, 8 of which involve only the functions $S_{\alpha p}.$ In our calculation of the mean curvature vector of a Cayley 4-fold we shall require only the following 4 of the relations:
\begin{small}
\begin{align}\label{eq:CayStoT}
\left[ \begin{array}{c}
S_{{8\,2}}-2S_{{3\,3}}+2S_{{4\,4}}-S_{{5\,3}}+2S_{{1\,1}}-2S_{{2\,2
}}-S_{{6\,4}}-S_{{7\,1}} \\
-S_{{8\,1}}-2S_{{3\,4}}-2S_{{4\,3}}+S_{{5\,4}}+2S_{{1\,2}}+2S_{{2\,1
}}-S_{{6\,3}}+S_{{7\,2}} \\
S_{{8\,4}}-2S_{{11\,1}}-2S_{{12\,2}}+S_{{5\,1}}-2S_{{9\,3}}-2S_{{10\,4}}-S_{{6\,2}}-S_{{7\,3}} \\
S_{{8\,3}}-2S_{{11\,2}}+2S_{{12\,1}}-S_{{5\,2}}-2S_{{9\,4}}+2S_{{10\,3}}-S_{{6\,1}}+S_{{7\,4}}
\end{array} \right] =6 \left[ \begin{array}{c}
T_{{74}}+T_{{41}}+T_{{52}}+T_{{
63}} \\
-T_{{73}}-T_{{42}}+T_{{51}}-T_{{
64}} \\
T_{{72}}-T_{{43}}+T_{{54}}+T_{{61}} \\
T_{{71}}+T_{{44}}+T_{{53}}-T_{{6\,2}} \end{array} \right]\!.
\end{align}
\end{small}

\begin{thm}\label{thm:CayleyMC}
Let $\Sigma \subset M$ be a Cayley 4-fold immersed in an 8-manifold $M$ with $\spsev$-structure. Then the mean curvature vector $H$ of $\Sigma$ is given by
\begin{align*}
H = - \left[ \left( \tau_1 \right)_\Ls \right]^\sharp - \tfrac{\sqrt{42}}{7} \left[ \left( \tau_3 \right)_\Ls \right]^\dagger.
\end{align*}
In particular, the largest torsion class of $\spsev$-structures for which every Cayley 4-fold is minimal is the class of torsion-free $\spsev$-structures.
\end{thm}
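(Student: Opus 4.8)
The plan is to mimic the moving--frames computations behind Theorems~\ref{thm:MCAssoc} and~\ref{thm:MCCoass}. Let $f\colon \Sigma^4 \to M$ be a Cayley $4$-fold and work on the adapted $\sph$-bundle $B \subset f^*(F_{\spsev})$, on which $\omega_{\Ls} = 0$. With the block decompositions of the natural connection form $\theta$ and of the $\pi$-semibasic torsion form $\gamma$ recorded above --- so that the ``mixed'' blocks are encoded by the $1$-forms $\sigma_\alpha$ ($1\le\alpha\le 12$) and $\gamma_r$ --- and writing the Levi-Civita form as $\psi = \theta + 2\gamma$, the mean curvature vector (valued in the normal space $\Ls^\sharp$) is the trace of the second fundamental form:
\begin{align*}
\begin{bmatrix} H_5 \\ H_6 \\ H_7 \\ H_8 \end{bmatrix}\mathrm{vol}_{\K}
= \begin{bmatrix}
\psi_{51} & \psi_{52} & \psi_{53} & \psi_{54} \\
\psi_{61} & \psi_{62} & \psi_{63} & \psi_{64} \\
\psi_{71} & \psi_{72} & \psi_{73} & \psi_{74} \\
\psi_{81} & \psi_{82} & \psi_{83} & \psi_{84}
\end{bmatrix} \wedge \begin{bmatrix} \beta_1 \\ \beta_2 \\ \beta_3 \\ \beta_4 \end{bmatrix},
\end{align*}
where $\beta_p := *_{\K}\omega_p \in \Omega^3(B)$ and $\mathrm{vol}_{\K} = \omega_{1234}$. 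Since $\psi_{rp}$ is the sum of the $\sigma$-block and $2$ times the $\gamma$-block, the right-hand side splits into a $\theta$-contribution (assembled from the $\sigma_\alpha$) and a $\gamma$-contribution (assembled from the $\gamma_r$).

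I would treat the $\gamma$-contribution first. Writing $\gamma_r = T_{rq}\omega_q$ and carrying out the wedge products against the $\beta_p$, each row produces a fixed linear combination of the torsion functions $T_{rq}$ that appear in the block~(\ref{eq:CayTorsResult}); substituting~(\ref{eq:CayTorsResult}), the $\V_{2,1,1}$-part (the $E_\alpha$) cancels because of the internal structure of that block, and one is left only with the functions $B_r$ and $D_s$ --- that is, with $(\tau_1)_{\Ls}$ and $(\tau_3)_{\Ls}$. For the $\theta$-contribution, write $\sigma_\alpha = S_{\alpha p}\omega_p$; after the wedge products each row produces a trace-type combination of the $48$ functions $S_{\alpha p}$. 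These are not independent: the second line of the first structure equation on $B$ gives $(\sigma + 2\gamma_{1,0,1})\wedge\omega_{\K} = 0$, i.e.~(\ref{eq:CaydomLcons}), whose relevant consequences are precisely the four relations~(\ref{eq:CayStoT}) expressing the needed $S$-combinations in terms of the $T_{rq}$. Feeding~(\ref{eq:CayStoT}) into the $\theta$-contribution and then~(\ref{eq:CayTorsResult}) once more, all torsion functions except $B_r$ and $D_s$ again drop out. Adding the two contributions yields $H_r = c_1 B_r + c_2 D_r$ for explicit numerical constants $c_1, c_2$, and the normalization identities~(\ref{eq:CayIsoIso}), together with the musical isomorphism $\sharp$ and the isometry $\dagger$ of Definition~\ref{defn:dagger}, turn this into
\[
H = -\bigl[(\tau_1)_{\Ls}\bigr]^\sharp - \tfrac{\sqrt{42}}{7}\bigl[(\tau_3)_{\Ls}\bigr]^\dagger .
\]

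For the minimality assertion: if $H = 0$ for every Cayley $4$-fold in $M$, then (using that a Cayley $4$-fold exists tangent to every Cayley $4$-plane) $(\tau_1)_{\Ls} = 0$ and $(\tau_3)_{\Ls} = 0$ at every $x\in M$ and for every Cayley $4$-plane $\Ls^\sharp \subset T_xM$. Because $\spsev$ acts transitively on Cayley $4$-planes (Proposition~\ref{prop:CayAlg}) and the projections onto the $\Ls$-summands of $\Lambda^1(V^*)$ and of $\Lambda^3_{48}(V^*)$ are nonzero $\sph$-equivariant maps, the common kernel of these projections is a $\spsev$-submodule of the (irreducible) torsion module, hence zero; thus $\tau_1|_x = 0$ and $\tau_3|_x = 0$, i.e.\ $d\Phi = 0$. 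Conversely, when $d\Phi = 0$ the form $\Phi$ is a calibration whose calibrated $4$-planes are exactly the Cayley planes, so every Cayley $4$-fold is calibrated and hence minimal; therefore the largest torsion class with the stated property is the torsion-free one.

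I expect the main obstacle to be the bookkeeping in the $\theta$-contribution: identifying which linear combination of the $24$ relations packaged in~(\ref{eq:CaydomLcons}) reproduces (a scalar multiple of) the four relations~(\ref{eq:CayStoT}), and then checking that, after substituting~(\ref{eq:CayTorsResult}), every torsion function other than those of $(\tau_1)_{\Ls}$ and $(\tau_3)_{\Ls}$ cancels with the correct numerical coefficient --- in particular that $c_1$ and $c_2$ come out exactly as dictated by~(\ref{eq:CayIsoIso}). This is routine but sign- and scale-sensitive, precisely the kind of step the authors confirm with computer algebra; the conceptual content of the theorem is already contained in the two cancellation phenomena described above.
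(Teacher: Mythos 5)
Your proposal mirrors the paper's proof almost step for step: the adapted $\sph$-bundle $B$, the splitting $\psi=\theta+2\gamma$, the mean-curvature trace against the $\beta_p$, the reduction of the $\sigma$-block via the structure-equation relations~(\ref{eq:CayStoT}), and finally the substitution of~(\ref{eq:CayTorsResult}) and the normalizations~(\ref{eq:CayIsoIso}). Your prediction that only $B_r$ and $D_r$ survive both contributions is exactly what the paper's computation verifies (yielding $H_r = -32B_r - 96D_r$), so the route is the same.

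One small remark on the closing ``largest torsion class'' argument: you invoke the existence of a Cayley $4$-fold tangent to an arbitrary Cayley $4$-plane, but local existence of Cayleys in a general (torsionful) $\spsev$-structure is not established in the paper (and is exactly the type of question its ``Local Existence Problem'' leaves open). The intended reading, and the one the paper uses implicitly, avoids this: the mean-curvature formula vanishes \emph{for all choices of Cayley plane} if and only if the $\Ls$-projections of $\tau_1$ and $\tau_3$ vanish for every such plane, and then the $\spsev$-transitivity/Schur argument you give forces $\tau_1=\tau_3=0$. That is, the equivariance argument should be applied at the level of the formula rather than to actual submanifolds, which removes the existence hypothesis. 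Otherwise your outline is sound and the ``obstacle'' you flag (verifying the exact numerical cancellations) is indeed where the paper relies on a computer-algebra check.
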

 \begin{proof}
 Let $\beta_p = *_{\K} \omega_p \in \Omega^3 \left( B \right),$ and let $\text{vol}_\K = \omega_{1234}.$ The components of the mean curvature vector $H$ of $\Sigma$ may be computed as follows:
\begin{small}
 \begin{align}
 \begin{bmatrix} H_5 \\ H_6 \\ H_7 \\ H_8 \end{bmatrix} \text{vol}_{\K} & = \begin{bmatrix}
\psi_{51} & \psi_{52} & \psi_{53} & \psi_{54} \\
\psi_{61} & \psi_{62} & \psi_{63} & \psi_{64} \\
\psi_{71} & \psi_{72} & \psi_{73} & \psi_{74} \\ 
\psi_{81} & \psi_{82} & \psi_{83} & \psi_{84} 
\end{bmatrix} \wedge  \begin{bmatrix}
\beta_1 \\ \beta_2 \\ \beta_3 \\ \beta_4
\end{bmatrix} \nonumber \\
& = \left[ \begin{array}{cccc}
-2\sigma_{{1}}+\sigma_{{7}} & 2\sigma_{{2}}+\sigma_{{8}} & 2\sigma_{{3}}+\sigma_{{5}} & -2\sigma_{{4}}+\sigma_{{6}} \\
-2\sigma_{{2}}+\sigma_{{8}} & -2\sigma_{{1}}-\sigma_{{7}} & 2\sigma_{{4}}+\sigma_{{6}} & 2\sigma_{{3}}-\sigma_{{5}} \\
-\sigma_{{5}}+2\sigma_{{11}} & \sigma_{{6}}+2\sigma_{{12}} & \sigma_{{7}}+2\sigma_{{9}} & -\sigma_{{8}}+2\sigma_{{10}}  \\
\sigma_{{6}}-2\sigma_{{12}} & \sigma_{{5}}+2\sigma_{{11}} & -\sigma_{{8}}-2\sigma_{{10}} & -\sigma_{{7}}+2\sigma_{{9}} \end {array} \right] \wedge  \begin{bmatrix}
\beta_1 \\ \beta_2 \\ \beta_3 \\ \beta_4
\end{bmatrix} \label{eq:CayMCCalc} \\
& + 2  \left[ \begin{array}{cccc} -\gamma_{{4}} & -\gamma_{{5}} & -\gamma_{{6}} & \gamma_{{7}} \\
-\gamma_{{5}} & \gamma_{{4}} & \gamma_{{7}} & \gamma_{{6}} \\
-\gamma_{{6}} & -\gamma_{{7}} & \gamma_{{4}} & -\gamma_{{5}} \\
-\gamma_{{7}} & \gamma_{{6}} & -\gamma_{{5}} & -\gamma_{{4}} \end {array} \right] \wedge  \begin{bmatrix}
\beta_1 \\ \beta_2 \\ \beta_3 \\ \beta_4
\end{bmatrix}. \nonumber
 \end{align}
 \end{small}
 To evaluate the first term in (\ref{eq:CayMCCalc}), we substitute $\sigma_{\alpha}= S_{\alpha p} \omega_p,$ followed by (\ref{eq:CayStoT}), and finally (\ref{eq:CayTorsResult}), to obtain
\begin{small}
\begin{align*}
\left[ \begin{array}{c}
-2S_{{1\,1}}+S_{{7,1}}+2S_{{2\,2}}+S_{{8\,2}}+2S_{{3\,3}}+S_{{5\,3}}-
2S_{{4\,4}}+S_{{6\,4}} \\
-2S_{{2\,1}}+S_{{8\,1}}-2S_{{1\,2}}-S_{{7\,2}}+2S_{{4\,3}}+S_{{6\,3}}+
2S_{{3\,4}}-S_{{5\,4}} \\
-S_{{5\,1}}+2S_{{11\,1}}+S_{{6\,2}}+2S_{{12\,2}}+S_{{7\,3}}+2S_{{9\,3}
}-S_{{8\,4}}+2S_{{10\,4}} \\
S_{{6\,1}}-2S_{{12\,1}}+S_{{5\,2}}+2S_{{11\,2}}-S_{{8\,3}}-2S_{{10\,3}
}-S_{{7\,4}}+2S_{{9\,4}}
\end{array} \right] \text{vol}_\K = \left[ \begin{array}{c}
-24 B_5 - 72 D_5 \\
-24 B_6 - 72 D_6 \\
-24 B_7 - 72 D_7 \\
-24 B_8 - 72 D_8
\end{array} \right] \text{vol}_\K.
\end{align*}
\end{small}
Similarly, to evaluate the second term in (\ref{eq:CayMCCalc}), we substitute $\gamma_r = T_{rq} \omega_q,$ followed by (\ref{eq:CayTorsResult}), to obtain:
\begin{align*}
2 \left[ \begin{array}{c}
-T_{{41}}-T_{{52}}-T_{{63}}+T_{{74}} \\
-T_{{51}}+T_{{42}}+T_{{73}}+T_{{64}} \\
-T_{{61}}-T_{{72}}+T_{{43}}-T_{{54}} \\
-T_{{71}}+T_{{62}}-T_{{53}}-T_{{44}}
\end{array} \right] \text{vol}_\K = \left[ \begin{array}{c}
-8 B_5 - 24 D_5 \\
-8 B_6 - 24 D_6 \\
-8 B_7 - 24 D_7 \\
-8 B_8 - 24 D_8
\end{array} \right] \text{vol}_\K.
\end{align*}
We conclude that $H_{r} = -32 B_r - 96 D_r,$ and so (\ref{eq:CayIsoIso}) yields that
\begin{align*}
H = - \left[ \left( \tau_1 \right)_\Ls \right]^\sharp - \tfrac{\sqrt{42}}{7} \left[ \left( \tau_3 \right)_\Ls \right]^\dagger
\end{align*}
\end{proof}
 
 \subsubsection{The Second Fundamental Form of a Cayley 4-fold}
 
\indent \indent The mean curvature vector is not the only part of the second fundamental form of a Cayley 4-fold that can be written in terms of the torsion forms of the ambient $\spsev$-structure. For a Cayley 4-fold, the second fundamental form is naturally a section of a vector bundle modeled on the $\sph$-representation
 \begin{align}\label{eq:CayIIDecomp}
 \text{Sym}^2 \left( \K \right) \otimes \Ls & \cong \left( \V_{2,2,0} \oplus \R \right) \otimes \Ls \\
  & \cong \V_{2,3,1} \oplus \V_{0,3,1} \oplus \V_{2,1,1} \oplus \Ls, \nonumber
 \end{align}
 and so the second fundamental form naturally decomposes into four pieces, with the piece corresponding to the $\Ls$ summand equal to the mean curvature vector.
 
 By calculations similar to those performed in the proof of Theorem \ref{thm:CayleyMC}, it is possible to show that the piece of the second fundamental form corresponding to the $\V_{0,3,1}$ summand in (\ref{eq:CayIIDecomp}) is identically zero, while the piece corresponding to the $\V_{2,1,1}$ summand in (\ref{eq:CayIIDecomp}) is proportional to the refined torsion form $\left( \tau_3 \right)_{2,1,1}.$

\pagebreak

\bibliographystyle{plain}
\bibliography{MeanCurvRef}

\Addresses

\end{document}